\numberwithin{equation}{section}
\def\apl#1#2#3{#1:
\xymatrix{#2\ar[r]&#3}
}
\newtheorem{theorem}{Theorem}[section]
\newtheorem{lemma}[theorem]{Lemma}
\newtheorem{corollary}[theorem]{Corollary}
\newtheorem{proposition}[theorem]{Proposition}
\newtheorem*{coro*}{Corollary}
\theoremstyle{definition}}
\theoremstyle{definition}\newtheorem{example}[theorem]{Example}}
\theoremstyle{definition}
\newtheorem{definition}[theorem]{Definition}
\newtheorem{question}[theorem]{Question}
\newtheorem{fact}[theorem]{Fact}
\newtheorem{claim}[theorem]{Claim}
\theoremstyle{definition}\newtheorem{remark}[theorem]{Remark}}
\def\D{\ensuremath{\mathbb D}}
\def\T{\ensuremath{\mathbb T}}
\def\R{\ensuremath{\mathbb R}}
\def\Z{\ensuremath{\mathbb Z}}
\def\C{\ensuremath{\mathbb C}}
\def\Q{\ensuremath{\mathbb Q}}
\def\N{\ensuremath{\mathbb N}}
\newcommand{\pss}[2]{\ensuremath{{\langle #1,#2\rangle}}}
\newcommand{\wh}[1]{\widehat{#1}}
\newcommand{\ka}{Kazhdan}
\newcommand{\qq}{Q}
\newcommand{\set}[1]{\left\{#1\right\}}
\newcommand{\wrt}{with respect to}
\newcommand{\op}{operator}
\newcommand{\sep}{separable}
\newcommand{\js}{Jamison sequence}
\newcommand{\ld}{L^{2}(\T,\sigma )}
\newcommand{\la}[1]{\langle #1\rangle}
\newcommand{\pt}{Property (T)}
\newcommand{\kk}{k\ge 0}
\newcommand{\iz}{n\in\Z}
\newcommand{\nq}{n\in\qq}
\newcommand {\be}{\begin{equation}}
\newcommand {\ee}{\end{equation}}
\newcommand {\beq}{\begin{eqnarray*}}
\newcommand {\eeq}{\end{eqnarray*}}
\DeclareMathOperator{\ke}{Ker}
\renewcommand{\ker}{\ke}
\newcommand{\eve}{eigenvector}
\newcommand{\eva}{eigenvalue}
\author{Catalin Badea}
\address{Universit\'{e} de Lille, CNRS, UMR 8524 - Laboratoire Paul Painlev\'{e}, B\^atiment M2, 59655 Villeneuve d'Ascq Cedex, France}
\email{catalin.badea@univ-lille1.fr}
\author{Sophie Grivaux}
\address{CNRS,
Laboratoire Ami\'enois de Math\'{e}matique Fondamentale et Appliqu\'{e}e, UMR 7352,
Universit\'{e} de Picardie Jules Verne,
33 rue Saint-Leu,
80039 Amiens Cedex 1,
France. {\sl Current address}:  CNRS, Laboratoire Paul Painlev\'{e}, UMR 8524, Universit\'{e} de Lille, B\^atiment M2, 59655 Villeneuve d’Ascq Cedex, France}
\email{grivaux@math.univ-lille1.fr}
\date{November 1, 2017}
\thanks{This material is based upon work supported by EU IRSES grant AOS (PIRSES-GA-2012-318910), by the Labex CEMPI (ANR-11-LABX-0007-01), and also by the National Science
Foundation under Grant No. DMS-1440140 while the first author was in residence
at the Mathematical Sciences Research Institute in Berkeley, California,
during the Fall 2016 semester.
}
\title[Jamison and Kazhdan sets in $\mathbb{Z}$]{Sets of integers determined by operator-theoretical properties: Jamison and Kazhdan sets in the group $\mathbb{Z}$}
\begin{document}
\begin{abstract}
The aim of this partly expository paper is to present and discuss two classes of sets of integers (Jamison and Kazhdan sets) whose definition and/or properties are determined or inspired by operator-theoretical properties. Jamison sets first appeared in the study of the relationship between the growth of the sequence of norms of iterates of a bounded linear operator on a separable Banach space and the size of its unimodular point spectrum. Kazhdan subsets of $\Z$ are particular cases of Kazhdan sets in 
general topological groups, which are especially important as they appear 
in the definition of Property (T). This paper is also intended as a 
companion to the authors' paper \cite{BaGr}, which undertakes a study of 
Kazhdan subsets of some classical groups without Property (T). We present here 
in detail the case of the group $\Z$, which is one of the most natural 
examples of groups without Property (T), and which may be useful to 
build an intuition of some of the main results of \cite{BaGr}. Also, the 
proofs in the case of the group $\Z$ rely solely on tools from basic 
operator theory and harmonic analysis. Some crucial links between Jamison 
and Kazhdan sets in $\Z$ are exhibited, and many examples are given.
\end{abstract}

\begin{altabstract}
Nous pr\'esentons dans ce texte deux classes de sous-ensembles de $\Z$ (les
ensembles de Jamison et les ensembles de Kazhdan) dont la d\'efinition et/ou
les propri\'et\'es sont motiv\'ees et/ou inspir\'ees par des propri\'et\'es de nature op\'era\-torielle. Les ensembles de Jamison sont apparus dans l'\'etude des relations
entre, d'une part, la croissance des normes des it\'er\'es d'un op\'erateur born\'e
sur un espace de Banach s\'eparable et, d'autre part, la taille de son spectre
ponctuel unimodulaire. Les ensembles de Kazhdan dans $\Z$ sont 
des cas parti\-culiers d'ensembles de Kazhdan dans les groupes g\'en\'eraux; ces derniers sont
particuli\`erement importants puisqu'ils apparaissent dans la d\'efinition de la
Propri\'et\'e (T). Ce texte compl\'emente \'egalement notre article \cite{BaGr}, dans lequel
nous \'etudions les sous-ensembles de Kazhdan de certains groupes classiques
n'ayant pas la Propri\'et\'e (T). Nous pr\'esentons ici en d\'etail le cas du groupe $\Z$,
qui est l'un des exemples les plus \'el\'ementaires de groupes ne poss\'edant pas la
Propri\'et\'e (T), et qui peut \^etre utile pour appr\'ehender certains des r\'esultats
principaux de \cite{BaGr}. De plus, les preuves dans ce cas particulier reposent exclu\-sivement sur des outils de th\'eorie des op\'erateurs \'el\'ementaire et d'analyse
harmonique. Nous pr\'esentons \'egalement certains liens importants entre les
ensembles de Jamison et les ensembles de Kazhdan, et donnons de nombreux
exemples.
\end{altabstract}
\subjclass{22D10, 22D40, 37A15, 11K069, 43A07, 46M05, 47A10}
\keywords{Kazhdan sets in $\mathbb{Z}$, Jamison sets in $\mathbb{Z}$, Jamison sequences,
equidistributed sequences}
\maketitle

\section{Introduction}

Our aim in this partly expository paper is to present a study of two 
particular classes of subsets of $\Z$, defined by operator-theoretic 
conditions: \emph{Jamison sets} and \emph{\ka\ sets} in $\Z$. Our 
study fits into the general framework of the investigation of sets of 
integers defined by properties coming from various fields of mathematics: 
combinatorics, dynamical systems and ergodic theory, harmonic  
analysis... or operator theory. Well-known examples of such classes of 
sets are recurrence sets for dynamical systems in various contexts, or pointwise convergence sets (see for instance the accounts \cite{Fur}, \cite{Fr} or \cite{Be2}, 
among many other works on the subject), van der Corput sets (see for instance 
\cite{BL} and the references therein), Bohr sets (\cite{Be1}, \cite{Katz}, \cite{HK}), Rajchman and Riesz sets, Sidon sets 
(\cite{HMP}), and (very) many others. In several of these settings, the original definition 
admits a reformulation in operator-theoretic terms. We consider here 
classes of sets originally defined in operator-theoretic terms, motivated 
either by natural problems in operator or representation theory, or by 
questions pertaining to the study of geometry of groups 
via representation theory. In the two cases on which we focus here -- 
namely Jamison sets and \ka\ sets in $\Z$ -- one of our aims is to obtain 
characterizations of such classes of sets in terms of measures, which are 
much more tractable than the initial definitions and allow a thorough 
study of the aforementioned classes.
\par\smallskip 
Jamison sets were first defined in operator theory as subsets of $\N$, in 
the study of the relationship between the growth of the norms of the 
iterates of a bounded linear operator on a separable complex Banach space 
and the size of its peripheral point spectrum.
\ka\ sets in $\Z$ are particular cases of \ka\ sets in general topological 
groups. This class of sets is especially important, as it 
is involved in the definition of the famous Property (T): a 
topological group has \emph{\ka's Property (T)}, or is a \emph{\ka\ 
group}, if it admits a compact \ka\ set. As any other  non-compact locally compact 
amenable group, $\Z$ does not have Property (T), and thus admits no 
compact (i.e.\ finite in this case) \ka\ set. Still, it makes sense to 
investigate the structure of \ka\ sets in groups which do not have 
Property (T), and several questions along these lines were proposed in 
\cite[Sec.\,7.12]{BdHV}. Some of them were solved in the authors' paper \cite{BaGr}, 
where a new characterization  of \ka\ sets in general topological groups 
was obtained which yielded a characterization of \ka\ sets in many classical 
groups. This approach also yielded an answer to a
question proposed by Shalom in \cite[Sec.\,7.12]{BdHV} concerning the 
links between equidistribution properties of sequences $(n_{k}\theta 
)_{k\ge 0}$ for irrational numbers $\theta $ and the fact that the set 
$\{n_{k}\,;\,k\ge 0\}$ is a \ka\ subset of $\Z$. Although the results of 
\cite{BaGr} hold in a much more general framework, understanding the case of 
the group $\Z$ proved to be a crucial step towards the general results. In 
particular, as we will see, Jamison sets play a key role in the proof of the results of 
\cite{BaGr} specialized to the group $\Z$.
\par\smallskip 
Thus, besides being a survey paper about Jamison and \ka\ sets in $\Z$, 
the present article is also intended as a companion to \cite{BaGr}, in which 
we present independent proofs of some of the results of \cite{BaGr} in the 
case of the group $\Z$. The proofs in this particular setting rely only on 
classical tools from harmonic analysis, whereas the general proofs of 
\cite{BaGr} involve more sophisticated tools  from abstract harmonic 
analysis, ergodic theory and representation theory. Also, the case of the 
group $\Z$ is extremely useful to build a good intuition of what is likely 
to happen in the study of \ka\ sets in arbitrary groups. The reader 
familiar with \cite{BaGr}
will realize easily enough that the proof of Theorem 2.3 of \cite{BaGr} relies on the same ideas as the proof of Theorem \ref{Theorem 2} below: convolution of measures become tensor product of representations in the proof of \cite[Th.~2.3]{BaGr}, continuous measures are replaced by weakly mixing representations, and although Jamison sets do not appear in the proof of \cite[Th.~2.3]{BaGr}, the dichotomy between Case 1 and Case 2 reflects the dichotomy between Jamison and non-Jamison sets in the case of $\Z$.
\par\smallskip
The paper is organized as follows: we first present in 
Section \ref{sec:2} below the definition and the main properties of Jamison 
sets in the original setting of \cite{BG1} and \cite{BG2}, namely that of 
subsets of $\N$. We introduce Jamison subsets of $\Z$ in Section \ref{sec:3}, and briefly indicate how the known results about Jamison subsets 
of $\N$ extend to this setting. We also give in this section several 
examples of Jamison and non-Jamison subsets of $\Z$. In Section 
\ref{sec:4}, we consider \ka\ subsets of $\Z$ as a particular case of the 
more general notion of \ka\ sets in topological groups, and we 
characterize  them in several ways. The two most useful characterizations 
are given in terms of Fourier coefficients of probability measures on the 
unit circle. While the first of these two characterizations (Theorem 
\ref{Theorem 1}) follows from rather standard arguments, the 
second one (Theorem \ref{Theorem 2}) is far less obvious, and actually 
forms the core of our study of \ka\ subsets of $\Z$. After giving several 
examples of \ka\ and non-\ka\ subsets of $\Z$, we prove Theorem 
\ref{Theorem 2} in Section \ref{Section 6}. Section \ref{Section 7} deals with the 
link between equidistribution properties modulo one and \ka\ sets in $\Z$. 
In particular, we re-obtain the answer of \cite{BaGr} to the above mentioned question of
Shalom (\cite[Sec.\,7.12]{BdHV}).

 \section{Jamison sets: the origins}\label{sec:2}
 Let us first fix some notation, which will be used throughout the paper. We will denote by $\T$ the unit circle in $\C$ and by $\D$ the open unit disk: $\T = \{\lambda \in \C : |\lambda| = 1\}$  and $\D = \{\lambda \in \C : |\lambda| < 1\}$. If
 $T \in \mathcal{B}(X)$ is a bounded linear operator on a complex Banach space $X$, we denote by $\sigma_p(T) $ the \emph{point spectrum} of $T$ (i.e. the set of eigenvalues of $T$). The set $\sigma_p(T)\cap\T$ of eigenvalues of $T$ of modulus $1$ is called the \emph{unimodular point spectrum} of $T$. We 
 say that $T\in \mathcal{B}(X)$ is \emph{power bounded} if $\sup_{n\ge 0}  \|T^n\| $ is finite. Observe that the spectrum of a power bounded operator is always included in the closed unit disc. Lastly,  a set is said to be
 countable if it stands in one-to-one correspondence with a subset of $\N$.
 
 \subsection{Definition of \js s} 
 The study of \js s of positive integers originated in the following question: how is the growth of the sequence $(\|T^n\|)_{n\ge 0}$ of the norms of the iterates of $T$ influenced by the size of the (unimodular) point spectrum of $T$ and by the geometry of the ambient space $X$?
 \par\smallskip
 A basic result in this direction is due to Jamison \cite{J}, who proved in $1965$ that if $X$ is separable and $T$ is power bounded, the unimodular point spectrum $\sigma_p(T)\cap \T$  of $T$ is countable.   
 Jamison's result is optimal in several senses: suitable unitary diagonal operators on a (separable or non-separable) Hilbert space show that $\sigma_p(T)\cap \T$ may be any countable set under the hypotheses of the result, and that one cannot drop the assumption that $X$ be separable. Also, nothing can be said in general about the size of the set of eigenvalues of a power-bounded operator which are not on the unit circle. For instance, the backward shift $B$ on $\ell^2(\C)$ given by $B(x_0,x_1,\cdots) = (x_1,x_2, \cdots)$, $x=(x_{n})_{n\ge 0}\in\ell^{2}(\C)$, has any $\lambda \in \D$ as an eigenvalue but has empty unimodular point spectrum. Jamison \cite{J} gave an example of an operator of spectral radius one on a separable Hilbert space which has uncountably many unimodular eigenvalues.
  \par\smallskip
 In view of the result of Jamison, it was natural to investigate whether, given a strictly increasing sequence $(n_k)_{k\ge 0}$ of integers, the condition
 $\sup_{k\ge 0}\|T^{n_k}\| <+\infty$ (where $T$ is a bounded \op\ on a separable Banach space $X$) implies that $\sigma_p(T)\cap\T$ is  countable.
 When $X$ is a general Banach space and $\sigma_p(T)\cap \T$ is assumed to be uncountable, Ransford \cite{R} proved that the norms $||T^{n}||$ of the iterates of $T$ tend to infinity along a subset of density $1$ of $\N$. This last result was complemented in the paper \cite{RR}, where Ransford and Roginskaya showed that the norms $\|T^n\|$ do not necessarily tend to infinity under such assumptions, even for separable Banach spaces. More precisely, they constructed, for each sequence $(n_k)_{k\ge 0}$ of integers such that $n_k$ divides $n_{k+1}$ for each $k\ge 0$ and $(n_k)_{k\ge 0}$ grows fast enough, a separable Banach space $X$ and an operator $T$ on $X$ with uncountable unimodular point spectrum such that the sequence $(\|T^{n_k}\|)_{k\ge 0}$ is nonetheless bounded. 
 This motivates the following definition.
 
 \begin{definition}[\cite{RR}]
 Let $X$ be a separable Banach space. Let $(n_k)_{k\ge 0}$ be a strictly increasing sequence of positive integers and let $T$ be a bounded linear operator on $X$. We say that $T$ is \emph{partially power bounded} with respect to the sequence $(n_k)_{k\ge 0}$ if $\sup_{k\ge 0}\|T^{n_k}\| $ is finite. 
 \end{definition}
 
The question to know for which sequences of integers partial power-boundedness implies countable unimodular point spectrum was investigated further in \cite{BG1} and \cite{BG2}, where the following definition was introduced:

\begin{definition}
Let $(n_{k})_{k\geq 0}$ be a strictly increasing sequence of
positive integers. We say that $(n_{k})_{k\geq 0}$ is a \emph{Jamison sequence}
if for any separable Banach space $X$ and any bounded
\op\ $T$ on $X$, $\sigma _{p}(T)\cap\T$ is  countable
as soon as $T$ is partially power-bounded \wrt\ $(n_{k})_{k\ge 0}$.
\end{definition}

\subsection{A characterization of \js s} We shall see that 
being a Jamison sequence or not depends  not only on 
the growth of the sequence but also on its
arithmetical properties. Our aim now is to present a complete characterization of Jamison
sequences which was obtained in \cite{BG2}. In order to formulate this characterization we need to introduce a distance on
the unit circle $\T$ associated to a given sequence $(n_{k})_{k\geq 0}$
with $n_{0}=1$: for $\lambda ,\mu \in\T$, let us define
$$d_{(n_{k})}(\lambda ,\mu )=\sup_{k\geq 0}|\lambda ^{n_{k}}-\mu ^{n_{k}}|.$$
This distance is used in \cite{RR} as well as in \cite{BG1,BG2}
 for the construction of non-\js s. 
The assumption that $n_{0}=1$ comes into play in order to ensure that $d_{(n_{k})}$ is indeed a distance on $\T$. It could be replaced by the weaker assumption that the set $\{n_{k} \textrm{ ; } k\ge 0\}$ generates the group $\Z$, but we will for simplicity's sake keep the hypothesis that $n_{0}$ be equal to $1$. Observe also that the property of being a \js\ remains unchanged by the addition or the removal of finitely many terms of the sequence. Assuming that the first term is equal to $1$ is thus no real restriction.

\begin{theorem}[\cite{BG2}]\label{th2}
Let $(n_{k})_{k\geq 0}$ be a strictly increasing sequence of positive integers
with $n_{0}=1$. The following assertions are equivalent:
\begin{itemize}
\item[(1)] the sequence $(n_{k})_{k\ge 0}$ is a \js;

\item[(2)] for every uncountable subset $K$ of $\T$, the metric space $(K,d_{(n_{k})})$
is non-separable;

\item[(3)] for every uncountable subset $K$ of $\T$, there exists  $\varepsilon >0$ such that
$K$ contains an uncountable $\varepsilon $-separated family for the
distance $d_{(n_{k})}$;

\item[(4)] there exists  $\varepsilon >0$ such that every uncountable subset $K$ of $\T$
contains an uncountable $\varepsilon $-separated family for the
distance $d_{(n_{k})}$;

\item[(5)] there exists $\varepsilon >0$ such that any
two distinct points $\lambda $ and $\mu $ in $\T$ are $\varepsilon
$-separated for the distance $d_{(n_{k})}$:
$$\textrm{for every } \lambda \not=\mu, \quad
\sup_{k\geq 0}|\lambda ^{n_{k}}-\mu ^{n_{k}}|\geq \varepsilon .$$
\end{itemize}
\end{theorem}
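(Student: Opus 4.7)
The plan is to establish the cycle $(5) \Rightarrow (4) \Rightarrow (3) \Rightarrow (2) \Rightarrow (1) \Rightarrow (5)$. The first four implications are fairly soft; the bulk of the theorem lies in the last one, $(1) \Rightarrow (5)$.

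For the easy chain, $(5) \Rightarrow (4)$ holds with the same $\varepsilon$, since under (5) any uncountable $K \subseteq \T$ is itself an $\varepsilon$-separated family. The step $(4) \Rightarrow (3)$ is a weakening. For $(3) \Rightarrow (2)$, I would note that a separable metric space cannot contain an uncountable $\varepsilon$-separated set: a countable dense set would have to assign to each element of that set an approximant at distance $<\varepsilon/2$, and this assignment is injective by the triangle inequality, a contradiction.

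For $(2) \Rightarrow (1)$, I would argue by contrapositive. If the sequence is not a Jamison sequence, there is a separable Banach space $X$ carrying an operator $T$ with $M := \sup_{k}\|T^{n_k}\| < +\infty$ and $K := \sigma_p(T) \cap \T$ uncountable. Picking, for each $\lambda \in K$, a unit eigenvector $x_\lambda$, a short computation using $T^{n_k} x_\lambda = \lambda^{n_k} x_\lambda$ yields
\begin{align*}
|\lambda^{n_k} - \mu^{n_k}| &= \|(\lambda^{n_k} - \mu^{n_k}) x_\mu\| \\
&\le \|\lambda^{n_k} x_\lambda - \mu^{n_k} x_\mu\| + \|x_\lambda - x_\mu\| \\
&= \|T^{n_k}(x_\lambda - x_\mu)\| + \|x_\lambda - x_\mu\| \le (M+1)\|x_\lambda - x_\mu\|
\end{align*}
for every $k$, so $d_{(n_k)}(\lambda,\mu) \le (M+1)\|x_\lambda - x_\mu\|$. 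Thus $\lambda \mapsto x_\lambda$ is Lipschitz from $(K, d_{(n_k)})$ into the separable space $X$, forcing $(K, d_{(n_k)})$ to be separable and contradicting $(2)$.

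The heart of the theorem is $(1) \Rightarrow (5)$, again by contraposition. Assuming (5) fails, I would extract a sequence $(\theta_m)_{m\ge 1}$ of nonzero real numbers with $\sup_k |e^{in_k \theta_m} - 1| \to 0$ as $m \to \infty$, and use it to construct a separable Banach space $X$ together with an operator $T \in \mathcal{B}(X)$ such that $\sup_k\|T^{n_k}\|$ is finite while $\sigma_p(T) \cap \T$ is uncountable. Following the strategy of \cite{RR,BG2}, a natural candidate is a weighted direct-sum construction built from the countable subgroup of $\T$ generated by the $e^{i\theta_m}$, designed so that each element of a suitable perfect set $P \subseteq \T$ of points $d_{(n_k)}$-close to $1$ becomes an eigenvalue of $T$, with weights tuned so that the smallness of $|e^{in_k\theta_m} - 1|$ enforces partial power-boundedness. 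The main obstacle is precisely to coordinate these three constraints—separability of $X$, uncountability of the unimodular point spectrum, and control of $\|T^{n_k}\|$—using only the arithmetic information furnished by the failure of (5), without any \emph{a priori} regularity of the $\theta_m$. This construction is the technical core of \cite{BG2} and is what distinguishes the present theorem from the partial results of \cite{RR}.
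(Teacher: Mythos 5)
Your easy chain $(5)\Rightarrow(4)\Rightarrow(3)\Rightarrow(2)\Rightarrow(1)$ is correct and matches the standard arguments, with one wording issue in $(2)\Rightarrow(1)$: the inequality you derive, $d_{(n_k)}(\lambda,\mu)\le(M+1)\|x_\lambda-x_\mu\|$, is the right one and does force separability of $(K,d_{(n_k)})$ (pull back a countable dense subset of $\{x_\lambda:\lambda\in K\}\subseteq X$), but it says the map $\lambda\mapsto x_\lambda$ is \emph{co}-Lipschitz, i.e.\ expands distances; calling it ``Lipschitz from $(K,d_{(n_k)})$ into $X$'' is the wrong direction, and a Lipschitz map of a non-separable space into a separable one proves nothing. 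Since you state and use the correct inequality, this is only a slip of terminology.

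For the hard step, your route differs from the paper's in a way worth flagging. You put all the weight on a single implication $(1)\Rightarrow(5)$, proved by contraposition from the bare hypothesis $\neg(5)$, which only yields a sequence of non-trivial points $\lambda_m$ with $\sup_k|\lambda_m^{n_k}-1|\to 0$. The paper (following \cite{BG2}) instead proves $(1)\Rightarrow(2)$ as the hard implication: there the contrapositive hypothesis $\neg(2)$ already hands you an uncountable $K\subseteq\T$ with $(K,d_{(n_k)})$ separable, so the operator construction (backward shift on a weighted sequence space, renormed so that $\sup_k\|T^{n_k}\|_{new}\le 3$ and $\|e_\lambda-e_\mu\|_{new}\le C\,d_{(n_k)}(\lambda,\mu)$) can be run immediately on $X_{new}^K$. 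The remaining gap in the cycle is closed by $(3)\Rightarrow(5)$, which is where the zero-one law of Ransford--Roginskaya (\cite[Prop.~2.1]{RR}; see also Theorem~\ref{The0Ter} in the paper) enters: each $\Lambda_\varepsilon$ is either $\{1\}$ or uncountable. Your proposed $(1)\Rightarrow(5)$ effectively has to \emph{rediscover} that zero-one law inside the hard step, since you must first manufacture an uncountable $d_{(n_k)}$-separable perfect set $P$ from the sequence $(\theta_m)$ before the renorming construction can even start. You gesture at this (``a suitable perfect set $P$ of points $d_{(n_k)}$-close to $1$'') but do not isolate it as a separate ingredient, which is what makes your single hard implication strictly harder than the paper's. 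Both routes are viable; the paper's decomposition into $(1)\Rightarrow(2)$ plus the $\Lambda_\varepsilon$ dichotomy separates the operator-theoretic construction from the Cantor-set/harmonic-analysis argument more cleanly, whereas your cycle is logically more economical but forces the two to be proved at once. The construction itself is, as you say, the technical core, and your sketch of it is consistent with but does not reproduce the renorming argument of \cite{BG2}.
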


We obtain in particular the following  characterization of \js s, which is the most useful one for practical as well as theoretical applications.

\begin{theorem}[\cite{BG2}]\label{th0}
Let $(n_{k})_{k\geq 0}$ be a strictly increasing sequence of
integers with $n_{0}=1$. The following assertions are equivalent:
\begin{itemize}
\item[(1)] $(n_{k})_{k\geq 0}$ is a \js;

\item[(2)] there exists a positive real number $\varepsilon $ such that
for every $\lambda \in\T\setminus \set{1}$, $$\sup_{k\geq 0}|\lambda ^{n_{k}}-1|
\geq \varepsilon .$$
\end{itemize}
\end{theorem}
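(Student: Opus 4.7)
The plan is to derive Theorem \ref{th0} as a direct corollary of Theorem \ref{th2}, by showing that the single-parameter condition (2) above is logically equivalent to the two-parameter condition (5) of Theorem \ref{th2}. Once this equivalence is in hand, the result follows immediately from the $(1) \Leftrightarrow (5)$ part of Theorem \ref{th2}, so no new operator-theoretic input is required.

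The implication $(5) \Rightarrow (2)$ is essentially by inspection: specializing $\mu = 1$ in condition (5) of Theorem \ref{th2} yields precisely the inequality claimed in (2). The only substantive step is therefore the reverse reduction, where the idea is to exploit the rotation invariance of $\T$.

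For $(2) \Rightarrow (5)$, given distinct $\lambda, \mu \in \T$, I would introduce the auxiliary element $\nu = \lambda\,\overline{\mu}$, which lies in $\T \setminus \{1\}$ since $\lambda \ne \mu$ and $|\mu| = 1$. Multiplication by $\mu^{n_k}$ then preserves moduli, and
$$|\lambda^{n_k} - \mu^{n_k}| \;=\; |\mu^{n_k}| \cdot |(\lambda\overline{\mu})^{n_k} - 1| \;=\; |\nu^{n_k} - 1| \qquad \text{for every } k \ge 0.$$
Taking the supremum over $k$ and applying (2) to $\nu$ yields $\sup_{k \ge 0} |\lambda^{n_k} - \mu^{n_k}| \ge \varepsilon$, with the same $\varepsilon$ that works in (2). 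This is exactly condition (5) of Theorem \ref{th2}.

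There is no genuine obstacle in this derivation: once Theorem \ref{th2} is available, the passage from the two-parameter separation condition to its one-parameter form is purely a matter of translating by $\overline{\mu}$ on the circle. All the operator-theoretic depth is concentrated in Theorem \ref{th2} itself — in particular in the implication $(5) \Rightarrow (1)$, whose proof requires constructing bounded operators with uncountable unimodular point spectrum on a suitable separable Banach space whenever the $\varepsilon$-separation condition fails, and it is that construction (rather than anything in the present statement) which would be the genuinely hard part.
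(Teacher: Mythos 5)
Your proposal is correct and matches the paper's own approach: immediately after the statement of Theorem \ref{th0}, the authors note that condition~(2) there is simply a reformulation of the uniform separation condition~(5) of Theorem \ref{th2}, which is exactly the translation-by-$\overline{\mu}$ observation you spell out. The reduction is sound and, as you note, the real content lives in Theorem \ref{th2}.
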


Indeed, condition $(2)$
in Theorem \ref{th0} can be reformulated in terms of the distance $d_{(n_{k})}$ by saying that
distinct points of $\T$ are uniformly separated for $d_{(n_{k})}$.

\begin{remark}
Let $\theta \in \R$. Let us denote by 
$$\|\theta\| := \min\left(\{\theta\}, 1-\{\theta\}\right) = \inf \{|\theta-n| : n \in \Z\}$$ the distance of $\theta$ to the nearest integer ($\{\theta\}$ denotes here the fractional part of $\theta$). Then 
$$ 4\|\theta\| \le |e^{2i\pi \theta} - 1| \le 2\pi \|\theta\| \quad\textrm{ for every }\theta\in\R.$$
The condition (2) of Theorem \ref{th0} can be thus expressed as follows: there exists a positive real number $\varepsilon' $ such that for every $\theta\in (0,1/2]$ we have 
$$\sup_{k\geq 0}\|n_{k}\theta\| \geq \varepsilon' .$$ 
\end{remark}

The most difficult part of the proof of Theorem \ref{th2} is that of the implication $(1) \Rightarrow (2)$. Assuming that a sequence $(n_{k})_{k\ge 0}$ does not satisfy (2), and that there exists an uncountable subset $K$ of $\T$ such that $(K, d_{(n_{k})})$ is separable, we have to construct an operator on a separable Banach space which is partially power bounded with respect to  $(n_k)_{k\geq 0}$ and has uncountable unimodular point spectrum.
Starting from the Hilbert space of complex sequences
$$H =\{x=(x_{j})_{j\geq 0} \textrm{ ; }
||x||=\left(\sum_{j\geq 0} \frac
{|x_{j}|^{2}}{j^{2}+1}\right)^{\frac {1}{2}}<+\infty \},$$ we consider the backward shift
$S $ on $H$. For every $\lambda\in\T$,
$e_{\lambda }=(\lambda , \lambda ^{2}, \lambda ^{3}, \ldots)$ is an eigenvector of $S$ associated to the eigenvalue $\lambda$.
A new norm on this space $H$ is defined by setting
$$\left|\left|x\right|\right|_{new}=
\max\left( \left|\left|x\right|\right|, \,\sup_{j\geq 0} \; 2^{-(j+1)}
\sup_{n_{k_{0}}, \ldots, n_{k_{j}}\in\{n_{k}\}}
 \left|\left|
\prod_{l=0}^{j} (S^{n_{k_{l}}}-I)x\right|\right|\right)$$
and we set $X_{new}=\{x\in X \textrm{ ; } ||x||_{new}<+\infty \}$.
Notice that
 $e_{\lambda }$ belongs to $X_{new}$ for every
$\lambda \in\T$. Indeed, we have
$$||e_{\lambda }||_{new}=\max\left(1,
\sup_{j\geq 0} \; 2^{-(j+1)}
\sup_{n_{k_{0}}, \ldots, n_{k_{j}}\in\{n_{k}\}}
\prod_{l=0}^{j} |\lambda ^{n_{k_{l}}}-1|
\right)\,||e_{\lambda }||=||e_{\lambda }||.$$
If $T$ denotes the \op\ induced by $S$ on
$X_{new}$,  we have
$\sup_{k\geq 0}\left|\left|T^{n_{k}}\right|\right|_{new}\leq
3$. Thus, with 
respect to the new norm, $T$ is partially power bounded with respect to 
$(n_k)_{k\geq 0}$.
The proof goes on by establishing the existence of a constant $C>0$ such that for every $\lambda ,\mu \in\T$,
$$
||e_{\lambda }-e_{\mu }||_{new}\leq C\, d_{(n_{k})}(\lambda ,\mu ).
$$
Our assumption that $(K,d_{(n_{k})})$ is separable then implies that the space $$X_{new}^{K}=\overline{\textrm{span}}^{||\,.\,||_{new}}[{e_{\lambda} \textrm{ ; } \lambda\in K}]$$ is
separable, and the unimodular point spectrum of the \op\ induced by $T$ on 
$X_{new}^{K}$ is
uncountable, as it contains $K$.
\par\smallskip
The proof of the implication $(3)\Rightarrow (5)$ in Theorem \ref{th2} shows that 
assertions $(3)$ and $(4)$ actually admit ``infinite'' instead of ``uncountable''
versions which are equivalent to $(n_{k})_{k\ge 0}$ being a \js:

\begin{corollary}\label{cor0}
The assertions of Theorem \ref{th2} are also equivalent to the
following:
\begin{itemize}
\item[(3')] for every uncountable subset $K$ of $\T$, there exists $\varepsilon >0$ such that
$K$ contains an infinite $\varepsilon $-separated family for the
distance $d_{(n_{k})}$;

\item[(4')] there exists $\varepsilon >0$ such that every uncountable subset $K$ of $\T$
contains an infinite $\varepsilon $-separated family for the
distance $d_{(n_{k})}$.
\end{itemize}
\end{corollary}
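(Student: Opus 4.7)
The implications $(3) \Rightarrow (3')$ and $(4) \Rightarrow (4')$ are immediate (``uncountable'' is stronger than ``infinite''), and $(4') \Rightarrow (3')$ is equally clear (a uniform $\varepsilon$ is stronger than a $K$-dependent one). Combined with Theorem \ref{th2}, the corollary therefore reduces to the single implication $(3') \Rightarrow (5)$. I would prove this by contrapositive: from a failure of $(5)$, construct an uncountable subset $K \subseteq \T$ which is \emph{totally bounded} with respect to $d_{(n_k)}$. Such a $K$ contains no infinite $\varepsilon$-separated family for any $\varepsilon > 0$ and thus violates $(3')$. The key observation is that the construction carried out in the proof of $(3) \Rightarrow (5)$ of Theorem \ref{th2} already yields a totally bounded $K$, not merely a separable one; so literally the same construction settles the ``infinite'' version as well.

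The construction is of Cantor type. Since $(5)$ fails, the set $U_\varepsilon := \{\nu \in \T \setminus \{1\} : d_{(n_k)}(\nu, 1) < \varepsilon\}$ is infinite for every $\varepsilon > 0$, so a sequence $(\nu_m)_{m \geq 1}$ can be chosen with $d_{(n_k)}(\nu_m, 1) < 3^{-m}$. Because $|\nu_m - 1| \leq d_{(n_k)}(\nu_m, 1) < 3^{-m}$ is summable, the infinite product
\[
\psi(a) \;:=\; \prod_{m \geq 1} \nu_m^{a_m}, \qquad a = (a_m)_{m\geq 1} \in \{0,1\}^\N,
\]
converges in $\T$ and defines a continuous map $\psi : \{0,1\}^\N \to \T$. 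The elementary inequality $|z_1 \cdots z_N - w_1 \cdots w_N| \leq \sum_i |z_i - w_i|$ for unimodular $z_i, w_i$, applied coordinatewise to $\psi(a)^{n_k}$ and $\psi(b)^{n_k}$, yields
\[
d_{(n_k)}\bigl(\psi(a), \psi(b)\bigr) \;\leq\; \sum_{m \,:\, a_m \neq b_m} d_{(n_k)}(\nu_m, 1) \;<\; \sum_{m \,:\, a_m \neq b_m} 3^{-m}.
\]
Hence, for every $M \geq 1$, the finite set $\{\psi(a_1, \ldots, a_M, 0, 0, \ldots) : (a_1, \ldots, a_M) \in \{0,1\}^M\}$ is a $\tfrac{1}{2} \cdot 3^{-M}$-net for $K := \psi(\{0,1\}^\N)$, so $(K, d_{(n_k)})$ is totally bounded.

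The main obstacle is to ensure that $K$ is uncountable, i.e., that $\psi$ is injective. Writing $\nu_m = e^{2\pi i \theta_m}$, one has $\sum_m |\theta_m| < 1/2$ (from the bound $d_{(n_k)}(\nu_m, 1) < 3^{-m}$), so injectivity of $\psi$ boils down to excluding all nontrivial relations $\sum_{m} c_m \theta_m = 0$ with $c = (c_m) \in \{-1, 0, 1\}^\N$. One arranges this by an inductive Baire-category choice of the $\nu_m$'s within the $G_\delta$ sets $U_{3^{-m}} \subseteq \T$: at each stage the ``bad'' set of $\theta_m$ creating a new forbidden relation is meager, so generically independent choices exist. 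Once $\psi$ is injective, $K$ has cardinality $2^{\aleph_0}$, and total boundedness completes the contradiction to $(3')$.
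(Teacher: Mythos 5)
The overall plan---reduce to $(3')\Rightarrow(5)$, argue by contrapositive, and from the failure of $(5)$ build an uncountable $K$ that is \emph{totally bounded} for $d_{(n_k)}$---is exactly what the paper has in mind when it remarks that the proof of $(3)\Rightarrow(5)$ from \cite{BG2} already yields the ``infinite'' version; and your estimate establishing that $K=\psi(\{0,1\}^{\N})$ is totally bounded is correct. The gap is in the injectivity of $\psi$. You need to exclude every nontrivial relation $\sum_m c_m\theta_m=0$ with $c\in\{-1,0,1\}^{\N}$, and you propose a stage-by-stage Baire-category choice in which, at stage $m$, the ``bad'' $\theta_m$ creating a new relation form a meager set. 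But a stage-$m$ choice can only forbid relations whose support lies in $\{1,\dots,m\}$; the relations that must be excluded are in general genuinely \emph{infinite} sums, and killing all finite ones does not dispose of them. For instance $\theta_m=2^{-m}$ admits no nontrivial finite $\{-1,0,1\}$-relation (uniqueness of binary expansions), yet $\theta_1=\sum_{m\ge2}\theta_m$; rational independence of the $\theta_m$ likewise fails to preclude infinite relations. (A secondary issue: $\Lambda_\varepsilon$ is a strict sub-level set of a lower semicontinuous function, hence $F_\sigma$ rather than $G_\delta$, and you would in any case need to check it is non-meager for a category argument to run.)

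The repair is simpler than Baire category and closes the gap cleanly: choose the $\nu_m$ so that the $|\theta_m|$ are super-decreasing, that is $|\theta_m|>\sum_{m'>m}|\theta_{m'}|$ for all $m$. This is achievable inductively: since $n_0=1$, every $\nu\in\Lambda_\delta\setminus\{1\}$ satisfies $|\theta|\le|\nu-1|/4<\delta/4$, and $\Lambda_\delta\setminus\{1\}\neq\emptyset$ for all $\delta>0$ because $(5)$ fails; so at stage $m$ pick $\nu_m\in\Lambda_{\delta_m}\setminus\{1\}$ with $\delta_m<\min(3^{-m},|\theta_{m-1}|)$, which forces $|\theta_m|<|\theta_{m-1}|/4$ and hence geometric decay of the tails. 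Then for any nonzero $c\in\{-1,0,1\}^{\N}$, writing $m_0=\min\{m:c_m\neq0\}$,
\[
\Bigl|\sum_{m}c_m\theta_m\Bigr|\ \ge\ |\theta_{m_0}|-\sum_{m>m_0}|\theta_m|\ >\ 0,
\]
which rules out finite and infinite relations simultaneously and gives the injectivity of $\psi$.
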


For a fixed sequence $(n_k)_{k\ge 0}$ with $n_0=1$, we denote, for each  $\varepsilon>0$, by $\Lambda_{\varepsilon}$ the subset of $\T$ defined by
\begin{equation}\label{eqajoutee}
 \Lambda _{\varepsilon }=\{\lambda \in\T \textrm{ : } \sup_{k\geq 0}|\lambda ^{n_{k}}-1|
<\varepsilon \}.
\end{equation}

Theorem \ref{th2} and \cite[Prop. 2.1]{RR} imply the following result:

\begin{corollary}\label{cor00}
The assertions of Theorem \ref{th2} are also equivalent to:
\begin{itemize}
\item[(6)] there exists  $\varepsilon >0$ such that $\Lambda _{\varepsilon }$
is  countable.
\end{itemize}
\end{corollary}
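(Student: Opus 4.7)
\emph{Proof plan.} The strategy is to pivot through condition (4) of Theorem \ref{th2}, exploiting the translation invariance of the distance $d_{(n_{k})}$ on the unit circle rather than trying to connect (6) directly to the pointwise separation property (5).

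One direction is immediate. If $(n_{k})_{k\ge 0}$ is a \js, then condition (5) of Theorem \ref{th2} provides a number $\varepsilon >0$ such that $\sup_{k\ge 0}|\lambda ^{n_{k}}-1|\ge \varepsilon $ for every $\lambda \in \T\setminus \{1\}$. This is equivalent to $\Lambda _{\varepsilon }=\{1\}$, which is trivially countable, so (6) holds.

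For the converse, I would assume $\Lambda _{\varepsilon }$ is countable for some $\varepsilon >0$ and aim to verify condition (4) of Theorem \ref{th2} with the \emph{same} $\varepsilon $. The key observation, which plays the role of \cite[Prop.\,2.1]{RR} here, is that every open ball $B(\lambda ,\varepsilon )=\{\mu \in \T : d_{(n_{k})}(\lambda ,\mu )<\varepsilon \}$ coincides with the translate $\lambda \cdot \Lambda _{\varepsilon }$, since $|\mu ^{n_{k}}-\lambda ^{n_{k}}|=|(\mu /\lambda )^{n_{k}}-1|$ for every $k$. Thus each $d_{(n_{k})}$-ball of radius $\varepsilon $ is countable, being a translate of $\Lambda _{\varepsilon }$.

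Given any uncountable $K\subseteq \T$, Zorn's lemma furnishes a maximal $\varepsilon $-separated family $K'\subseteq K$ for $d_{(n_{k})}$. By maximality, every point of $K$ lies within $d_{(n_{k})}$-distance less than $\varepsilon $ of some point of $K'$, so that $K\subseteq \bigcup_{\lambda \in K'}\lambda \cdot \Lambda _{\varepsilon }$. Were $K'$ countable, this would express $K$ as a countable union of countable sets, contradicting the uncountability of $K$. Hence $K'$ is uncountable, which is precisely condition (4) of Theorem \ref{th2}, and that theorem delivers (1), namely that $(n_{k})_{k\ge 0}$ is a \js.

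The only delicate step is spotting that (6) must be routed through the covering statement in (4) rather than attacked directly. Once the translation invariance $B(\lambda ,\varepsilon )=\lambda \cdot \Lambda _{\varepsilon }$ is noted, the maximal separated family plus covering dichotomy is routine; the remaining equivalences are supplied entirely by Theorem \ref{th2}.
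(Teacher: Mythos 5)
Your proof is correct. The easy direction (Jamison $\Rightarrow$ (6)) via condition (5) is exactly right: a Jamison constant $\varepsilon$ forces $\Lambda_\varepsilon = \{1\}$. The converse direction is also sound: the observation that $B(\lambda,\varepsilon) = \lambda\cdot\Lambda_\varepsilon$ by translation invariance of $d_{(n_k)}$, combined with a maximal $\varepsilon$-separated family and the covering $K \subseteq \bigcup_{\lambda\in K'}\lambda\cdot\Lambda_\varepsilon$, cleanly yields condition (4). The only minor point worth noting is that the maximal family is automatically nonempty since $K$ is, so the covering argument really does apply.

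The paper does not write out a proof of this corollary: it simply remarks that it follows from Theorem \ref{th2} together with \cite[Prop.~2.1]{RR}, the latter being an external structural result about the sets $\Lambda_\varepsilon$ (essentially, that if each $\Lambda_\varepsilon$ has a point other than $1$ then each is automatically uncountable, which the paper notes as a consequence just after the corollary). Your argument is a self-contained alternative: rather than invoking the Ransford--Roginskaya dichotomy for $\Lambda_\varepsilon$, you reduce (6) directly to condition (4) of Theorem \ref{th2} via the translation-invariance identity $B(\lambda,\varepsilon) = \lambda\cdot\Lambda_\varepsilon$ and a standard net/covering dichotomy. The two routes establish the same equivalence; yours has the advantage of not requiring the reader to look up the cited proposition, while the paper's phrasing packages the key observation as a reusable fact about the sets $\Lambda_\varepsilon$. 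Both are short once the translate identity for balls of $d_{(n_k)}$ is in hand.
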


Thus, as soon as each one of the sets
$\Lambda _{\varepsilon }$ has at least two elements, 
all of them are automatically
uncountable.

\subsection{Universal Jamison spaces}
The assertion that $(n_{k})_{k\geq 0}$ is not a Jamison sequence means that there exists a \sep\ Banach space $X$ and a bounded operator $T\in\mathcal{B}(X)$ such that $\sup_{k\geq
 0}||T^{n_{k}}||<+\infty$ and $\sigma _{p}(T)\cap\T$ is uncountable. But the space $X$ may very well be extremely complicated: as was briefly sketched above, in the original proof of Theorem \ref{th2} the space is obtained by a rather involved renorming argument. See also \cite[Chapter 6]{BM} for a slightly modified description of this construction. 
We now introduce the following definition.
 
 \begin{definition}
 Let $X$ be a separable infinite-dimensional complex Banach space. We say that $X$ is a \emph{universal Jamison space} if the following property holds true: for any increasing sequence of positive integers $(n_k)_{k\ge 0}$ which is not a Jamison sequence, there exists
a bounded
\op\ $T$ on
 $X$
which is partially power-bounded \wrt\ $(n_{k})_{k\ge 0}$ and whose unimodular point spectrum $\sigma _{p}(T)\cap\T$ is uncountable.
\end{definition}

Eisner and Grivaux proved in \cite{EG} that a separable infinite-dimensional complex Hilbert space is a universal Jamison space. The proof of \cite{EG} is completely explicit: the operators $T$ with $\sup_{k\geq 0}||T^{n_{k}}||<+\infty $  and $\sigma  _{p}(T)\cap\T$ uncountable constructed there are perturbations by a weighted backward shift on $\ell_2(\N)$ (endowed with its canonical basis) of a diagonal \op\ with unimodular diagonal coefficients. For other Banach spaces, the following results are known.

\begin{theorem}[\cite{EG}]\label{thm:EiGr}
The space $\ell_p(\N)$ is a universal Jamison space  for every $1\le p<+\infty$. 
\end{theorem}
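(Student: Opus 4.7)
The strategy is to mimic the construction from \cite{EG} recalled just before the statement: given a non-Jamison sequence $(n_k)_{k\ge 0}$, I would produce an operator $T$ on $\ell_p(\N)$ which is a perturbation of a diagonal unitary diagonal by a weighted backward shift, exactly as in the Hilbert space case. Since $(n_k)_{k\ge 0}$ is not a Jamison sequence, Theorem \ref{th0} together with Corollary \ref{cor00} gives that, for every $\varepsilon>0$, the set $\Lambda_\varepsilon$ defined in \eqref{eqajoutee} is uncountable. Fix $\varepsilon>0$ small, choose an uncountable subset $K\subseteq\Lambda_\varepsilon$, and select a countable sequence $(\lambda_j)_{j\ge 0}\subset\T$ which approximates each $\mu\in K$ rapidly (the rate will be dictated by the estimates below). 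Writing $(e_j)_{j\ge 0}$ for the canonical basis of $\ell_p(\N)$, I would then define $T$ by $Te_0=\lambda_0 e_0$ and $T e_j=\lambda_j e_j + w_j e_{j-1}$ for $j\ge 1$, where the positive weights $(w_j)_{j\ge 1}$ are to be fixed inductively.

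For the unimodular point spectrum, I would solve $Tx=\mu x$ for $\mu\in K$: the upper bidiagonal form of $T$ forces $x_{j+1}=\bigl((\mu-\lambda_j)/w_{j+1}\bigr)x_j$, so the formal eigenvector with $x_0=1$ is
$$x^{(\mu)}=\sum_{j\ge 0}\Bigl(\prod_{i=0}^{j-1}\frac{\mu-\lambda_i}{w_{i+1}}\Bigr)e_j.$$
Provided $(\lambda_j)$ and $(w_j)$ are arranged so that $\sum_j\bigl|\prod_{i<j}(\mu-\lambda_i)/w_{i+1}\bigr|^p<+\infty$ for every $\mu\in K$, each such $\mu$ becomes an eigenvalue, so $\sigma_p(T)\cap\T\supseteq K$ is uncountable.

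For partial power-boundedness, I would exploit that $T$ is upper bidiagonal, so $T^n$ is upper triangular with explicit coefficients
$$(T^n)_{j,\,j+l}=w_{j+1}\cdots w_{j+l}\sum_{i_0+\cdots+i_l=n-l}\lambda_j^{i_0}\cdots\lambda_{j+l}^{i_l},$$
and then estimate these entries when $n=n_k$ using the crucial fact that $|\lambda_i^{n_k}-1|<\varepsilon$ for every $i\ge 0$ and every $k\ge 0$ (since $\lambda_i\in\Lambda_\varepsilon$). Summing geometric-like series of differences $\lambda_i^{n_k}-\lambda_j^{n_k}$ yields off-diagonal decay of the matrix of $T^{n_k}$; combining this with an $\ell_p$-boundedness criterion (a Schur-type test adapted to $\ell_p$, or a direct estimate of row/column sums at the appropriate exponents) gives $\sup_k\|T^{n_k}\|_{\ell_p\to\ell_p}<+\infty$, uniformly in $k$.

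The main obstacle is the inductive balancing of $(\lambda_j)$ and $(w_j)$ so that both requirements hold simultaneously: the weights must be large enough for the eigenvector series to be $p$-summable for every $\mu\in K$, yet small enough for the off-diagonal entries of $T^{n_k}$ not to accumulate in the $\ell_p$ operator norm. The $\ell_2$ construction of \cite{EG} is carried out by a delicate inductive choice that respects precisely this trade-off; passing to general $p\in[1,+\infty)$ amounts to replacing $\ell_2$ norm estimates by $\ell_p$ norm estimates and adjusting exponents, while keeping the inductive scheme unchanged. This is why the statement of Theorem \ref{thm:EiGr} extends without structural change from $p=2$ to every $p\in[1,+\infty)$.
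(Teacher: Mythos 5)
The paper does not actually prove Theorem \ref{thm:EiGr}; it cites \cite{EG} and only offers the one-sentence description of the operator as a perturbation of a unimodular diagonal by a weighted backward shift on $\ell_2(\N)$, so there is no in-paper proof to compare against. Your sketch correctly reproduces that architecture: the bidiagonal form $Te_j=\lambda_je_j+w_je_{j-1}$, the eigenvector recursion $x_{j+1}=\bigl((\mu-\lambda_j)/w_{j+1}\bigr)x_j$, the entrywise formula for $(T^n)_{j,\,j+l}$, and the use of Corollary \ref{cor00} to extract an uncountable $K\subseteq\Lambda_\varepsilon$ are all accurate, and exploiting $\lambda_i\in\Lambda_\varepsilon$ so that $\lambda_i^{n_k}-1$ is uniformly small is indeed the driving idea behind bounding the powers $T^{n_k}$.

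What you have, though, is a roadmap rather than a proof, and the gap is exactly where you say it is. The entire content of the theorem lies in the simultaneous inductive construction of $(\lambda_j)$ and $(w_j)$: the divided-difference expression for $h_{n_k-l}(\lambda_j,\dots,\lambda_{j+l})$ has denominators of the form $\prod_{i\neq m}(\lambda_m-\lambda_i)$, which can be arbitrarily small because the $\lambda_j$ must cluster near every $\mu\in K$; the weights $w_{j+1}\cdots w_{j+l}$ must therefore decay fast enough to tame this blow-up, yet not so fast that the eigenvector series $\sum_j\bigl|\prod_{i<j}(\mu-\lambda_i)/w_{i+1}\bigr|^p$ diverges for some $\mu\in K$. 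You explicitly punt this step to \cite{EG}. The passage from $p=2$ to general $p\in[1,\infty)$ is also asserted rather than argued: converting entrywise control of $T^{n_k}$ into an $\ell_p\to\ell_p$ operator-norm bound is not a cosmetic change of exponents --- the natural Schur-type tests handle $p=1,\infty$ and self-dually $p=2$, and for intermediate $p$ one needs interpolation or a weighted Schur test together with a check that the inductive parameter choices can be made uniformly in $p$. Until those two points are carried out, the proposal is a correct identification of the \cite{EG} construction and of the tension it must resolve, but it is not a proof of the statement.
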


Using the ideas of \cite{EG}, this was generalized by Devinck in \cite{Dev} to a larger class of Banach spaces.
 
\begin{theorem}[\cite{Dev}]\label{thm:Dev}
Any separable Banach space which admits an unconditional Schauder decomposition is a universal Jamison space.
\end{theorem}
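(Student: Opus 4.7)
The plan is to adapt the explicit $\ell_p$ construction used in \cite{EG} (Theorem \ref{thm:EiGr}) to the abstract setting of an unconditional Schauder decomposition. Fix such a decomposition $X=\bigoplus_{n\ge 0} X_n$ with associated projections $P_n$ and unconditional constant $C$, so that $\|\sum_n \alpha_n P_n x\|\le C\,\sup_n|\alpha_n|\,\|x\|$ for every bounded scalar sequence $(\alpha_n)$. Let $(n_k)_{k\ge 0}$ be a sequence which is not a Jamison sequence. By Theorem \ref{th0} and Corollary \ref{cor00}, for any $\varepsilon>0$ sufficiently small the set $\Lambda_\varepsilon$ defined in \eqref{eqajoutee} is uncountable. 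The goal is to produce $T\in\mathcal{B}(X)$ with $\sup_{k\ge 0}\|T^{n_k}\|<+\infty$ and $\Lambda_\varepsilon\subset \sigma_p(T)\cap\T$.

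The first step is to pick, for each $n\ge 0$, a unit vector $e_n\in X_n$ together with a norm-one functional $e_n^*\in X_n^*$ with $e_n^*(e_n)=1$; these replace the canonical basis and biorthogonal functionals used in $\ell_p(\N)$. For any sequence $(\mu_n)\subset\T$, one then defines the diagonal operator $Dx=\sum_n \mu_n P_n x$, which satisfies $\|D^k\|\le C$ for every $k\ge 0$ by the unconditionality of the decomposition. Next, introduce the rank-one ``backward shift'' $Bx=\sum_n w_n\,e_{n+1}^*(P_{n+1}x)\,e_n$ with a suitably chosen weight sequence $(w_n)$, and set $T=D+B$. For any $\lambda\in\T$ such that $\lambda\neq\mu_n$ for all $n$, a direct formal computation shows that the vector $u_\lambda=\sum_{n\ge 0} c_n(\lambda)\,e_n$, where the coefficients $c_n(\lambda)$ are determined by the recursion $(\lambda-\mu_{n+1})\,c_{n+1}(\lambda)=w_n\,c_n(\lambda)$ with $c_0(\lambda)=1$, is an eigenvector of $T$ associated to $\lambda$, provided the series converges in $X$.

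The main task is to choose $(\mu_n)$ and $(w_n)$ so that both conditions hold simultaneously: (i) the series defining $u_\lambda$ converges unconditionally in $X$ for uncountably many $\lambda\in\Lambda_\varepsilon$; and (ii) $\sup_{k\ge 0}\|T^{n_k}\|<+\infty$. Mimicking \cite{EG}, one takes the $\mu_n$'s to be a suitable enumeration of points of $\Lambda_\varepsilon$ (so that each factor $|\mu_n^{n_k}-1|$ is uniformly bounded by $\varepsilon$) and the weights $w_n$ to decay fast enough for the eigenvector series to converge while still being non-degenerate. Expanding $T^{n_k}=(D+B)^{n_k}$ by the binomial-like formula, the cross-terms containing $B$ can be reorganised as sums of operators each of which picks up a factor of the form $\mu_n^{n_k}-\lambda^{n_k}$ or $\mu_n^{n_k}-1$; the unconditional constant $C$ then allows one to control every such term in norm by an expression linear in $\varepsilon$ times a convergent series in the $|w_n|$'s.

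The hard part is precisely this last estimate: where the $\ell_p$ argument of \cite{EG} exploited isometric block-diagonal matrix identities on the canonical basis, here one only has the softer inequality provided by the unconditional constant, and the ``backward shift'' $B$ no longer acts on a homogeneous coordinate structure. The key is therefore to group the terms appearing in the expansion of $T^{n_k}$ so that each group is of the form $(\sum_n \alpha_n(k)\,P_n)\circ R$ for some fixed finite-rank perturbation $R$, with $|\alpha_n(k)|\lesssim \varepsilon$ uniformly in $k$; the unconditional bound then yields the desired uniform control $\sup_k\|T^{n_k}\|<+\infty$. Once this is achieved, the uncountability of $\Lambda_\varepsilon$ (Corollary \ref{cor00}) and the biorthogonality of the $(e_n,e_n^*)$ ensure that the family $\{u_\lambda:\lambda\in\Lambda_\varepsilon\}$ of eigenvectors is genuinely uncountable, completing the proof that $X$ is a universal Jamison space.
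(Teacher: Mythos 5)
The paper does not actually prove this theorem; it cites Devinck's paper \cite{Dev} and only remarks that the argument generalises the $\ell_p$ construction of \cite{EG}. So I can only assess your proposal on its own merits, and I see two concrete problems.

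First, a bookkeeping error in the eigenvector equation. With your operator $B$ a backward shift satisfying $Be_{n+1}=w_n e_n$, reading the equation $(D+B)u_\lambda=\lambda u_\lambda$ in the $e_n$ coordinate gives $\mu_n c_n + w_n c_{n+1} = \lambda c_n$, hence $c_{n+1}=\frac{\lambda-\mu_n}{w_n}\,c_n$, \emph{not} the recursion $(\lambda-\mu_{n+1})c_{n+1}=w_n c_n$ you wrote (that one corresponds to a forward shift). The distinction is not cosmetic: for $\sum_n c_n(\lambda)e_n$ to converge for uncountably many $\lambda$ you need the ratio $\bigl|\frac{\lambda-\mu_n}{w_n}\bigr|$ to be small, i.e.\ the $\mu_n$'s must approach every such $\lambda$ at a rate commensurate with the decay of $w_n$; your written recursion would instead require $\mu_n$ to stay bounded away from $\lambda$, which cannot hold simultaneously for an uncountable set of $\lambda$'s with a single fixed sequence $(\mu_n)$.

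Second, and more seriously, the step you yourself flag as ``the hard part'' is not actually resolved, and the structure you propose for $T^{n_k}$ is not attainable. You claim the expansion of $(D+B)^{n_k}$ can be regrouped so that every group has the form $\bigl(\sum_n \alpha_n(k)P_n\bigr)\circ R$ with $R$ a \emph{fixed finite-rank} operator and $|\alpha_n(k)|\lesssim\varepsilon$. But $B$ is necessarily an infinite-rank weighted shift (if the weights vanished eventually, the eigenvector recursion would truncate and only finitely many $\lambda$'s could occur), and the expansion of $(D+B)^{n_k}$ contains words such as $D^{j_0}BD^{j_1}B\cdots BD^{j_m}$ with up to $n_k$ occurrences of $B$, whose rank is unbounded in $k$; these cannot be packaged into a diagonal operator composed with a fixed finite-rank $R$. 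What the argument really requires (and what I believe is the crux of \cite{EG} and its generalisation in \cite{Dev}) is a simultaneous, quantitative prescription of the diagonal entries $\mu_n$ and the weights $w_n$ — typically via a block or tree structure in which the weights are switched off between blocks, so that $B$, hence every word containing $B$, is block-diagonal with finite-dimensional blocks and the bound on $\|T^{n_k}\|$ can be verified block by block. Saying the $\mu_n$'s are ``a suitable enumeration of points of $\Lambda_\varepsilon$'' and the $w_n$'s ``decay fast enough'' does not pin down this structure, and without it the crucial uniform norm estimate cannot be checked.
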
 

We recall here that a separable Banach space $X$ admits an \emph{unconditional Schauder decomposition} if there exists a sequence $(X_{\ell})_{\ell\ge1}$ of closed subspaces of $X$ (different from $\{0\}$) such that any vector $x$ of $X$ can be written in a unique way as an unconditionally convergent series $\sum_{\ell\ge1}x_{\ell}$, where $x_{\ell}$ belongs to $X_{\ell}$ for all  $\ell\ge 1$. There are many examples of Banach spaces which admit an unconditional Schauder decomposition, like spaces possessing an unconditional Schauder basis. Additional examples are $C([0,1])$, the James space and all spaces containing a copy of $c_0(\N)$. On the other hand (see \cite{Dev}), a hereditarily indecomposable Banach space is not a universal Jamison space since the unimodular point spectrum of any operator on such a space is countable (see \cite{Maurey} for details)). 

\subsection{Examples of Jamison sequences} 
Using the characterization of \js s given by Theorem \ref{th0}, it is easy to obtain many examples of such sequences. We present here some of them, following \cite{BG2}. We have chosen to present the complete proofs whenever they are not too long.

\begin{example}
 Let $(n_k)_{k\ge 0}$ be the sequence of all positive integers: $n_k = k+1$ for every $k\ge 0$. Then $(n_k)_{k\ge 0}$ is a \js.
\end{example}

\begin{proof}
This is the classical result of Jamison.
It is also a folklore result in character theory (or a simple exercise about the geometric aspects of complex numbers) that 
$(n_k)_{k\ge 0}$
satisfies  condition (2) of Theorem \ref{th0}.  It can even be proved that $\varepsilon = \sqrt{3}$ is the best Jamison constant for $(n_{k})_{k\ge 0}$ in the following sense:  if $z\in \T$ is such that $|z^{k+1} - 1| < \sqrt{3}$ for all $k\ge 0$, then $z =1$, and this constant $\sqrt{3}$ is optimal. 
Here is a possible proof of this statement. Let $z\in\T\setminus\{ 1\}$. If $z$ is not a root of unity, then the subgroup generated by $z$ is dense in the unit circle. In particular, there exists $k\ge 0$ such that  $|z^{k+1} - 1| \ge \sqrt{3}$.  If $z$ is a root of unity of even minimal order $ 2n$ then $z^n = -1$. The last case that we need to consider is when $z$ is a root of unity of odd minimal order $2n+1$ for some $n\ge 1$.  It is then enough to consider the case when $z = e^{2i\pi/(2n+1)}$. We have $\frac{2n\pi}{2n+1}  = \pi - \frac{\pi}{2n+1}\ge \frac{2\pi}{3}$. Therefore 
$$ \left|\exp({\frac{2in\pi}{2n+1}}) - 1\right|^2 = 2 - 2 \cos(\frac{2n\pi}{2n+1}) \ge 2 - 2 \cos (\frac{2\pi}{3}) = 3.$$
The example $\lambda = e^{2i\pi/3}$ shows that the constant $\sqrt{3}$ appearing in this statement cannot be improved. 
\end{proof}

\begin{example}[\cite{RR}]\label{ex1}
Let $(n_{k})_{k\geq 0}$ be a strictly  increasing sequence of positive integers
such that
$$\sup_{k\geq 0}\,
\frac {n_{k+1}}{n_{k}}<+\infty .$$ Then $(n_{k})_{k\ge 0}$ is a \js. For instance, the sequence  defined by $n_k = 2^k$, $k\ge 0$, is a \js.
\end{example}

\begin{proof}
Without loss of generality, we can assume that $n_{0}=1$. 
Let $$\kappa =\sup_{k\geq 0} \frac {n_{k+1}}{n_{k}}\cdot$$ We are going to show that condition (2) of Theorem \ref{th0}
is satisfied with $\varepsilon =2\,\sin\frac {\pi}{2\kappa }$. Let $\lambda\in\T\setminus \{1\}$, which we write as
 $\lambda =e^{i\theta }$, $0<\theta<2\pi$. Without loss of generality we suppose
that $0<\theta \leq \pi$. Since $(n_{k})_{k\ge 0}$ is a strictly increasing
sequence
with $n_{0}=1$, there exists a (unique) integer $k \ge 0$ such that
$\frac {\pi}{2n_{k+1}}<\frac {\theta }{2}\leq \frac {\pi}{2n_{k}}$.
Hence $\frac {\pi}{2\kappa }<\frac {n_{k}\theta }{2}\leq \frac {\pi}{2}$
so that for this special choice of $k$, $$|\lambda ^{n_{k}}-1|=2|\sin\frac {n_{k}\theta }{2}|
\geq 2\sin\frac {\pi}{2\kappa }\cdot$$ This proves our statement.
\end{proof}

\begin{example}\label{ex4}
Let $(n_{k})_{k\ge 0}$ be a strictly increasing sequence of integers such that the set $\{n_{k} \textrm{ ; } k\ge 0\}$ contains intervals of arbitrary length.
Then $(n_{k})_{k\ge 0}$ is a \js.
\end{example}

\begin{proof}
We suppose as usual that $n_0=1$. Let $\delta _{0}$ be such that $0<\delta_0<2$. We are going to show that for every $\lambda =e^{i\theta }
\in\T\setminus\{1\}$, $0<\theta \leq \pi$, with $\theta/\pi$ irrational, there exists $k\ge 0$ such
that $\lambda ^{n_{k}}=e^{in_{k}\theta }$ lies outside the arc $I_{\delta_0}=\{\lambda\in\T\textrm{ ; }|\lambda-1|<\delta_0\}$. 
For each $p\ge 1$, let $[N_{p}, \ldots, N_{p}+p-1]$
be an interval of length $p$ contained in the set $\{n_{k}\textrm{ ; } k\ge 0\}$. 
Since $\theta/\pi$ is irrational, there exists an integer $p\ge 1$ such that the numbers $\lambda^{n}$, $0\le n<p$, form a $(2-\delta_0)$-net of $\T$ (for the distance on $\T$ induced by the absolute value on $\C$). In particular there exists $0\le n<p$ such that $|\lambda^{n}+\lambda^{-N_p}|<2-\delta_0$, i.e. $|\lambda^{N_p+n}+1|<2-\delta_0$. Then $\lambda^{N_p+n}$ does not belong to the arc $I_{\delta_0}$.
Since the integer $N_{p}+n$ has the form $n_{k}$ for some $k\ge 1$, and since the reasoning above holds true for any $0<\delta_0<2$, we obtain that $\sup_{k\geq 0}|\lambda ^{n_{k}}-1|=2$ for every
 $\lambda=e^{i\theta}$ with $\theta/\pi$ irrational. Hence the set $\Lambda_2$ (see (\ref{eqajoutee}) for the definition) is countable, and by Corollary \ref{cor00} we obtain that $(n_{k})_{k\ge 0}$ is a \js.
\end{proof}

\begin{example}(\cite{R})\label{ex5}
If $(n_{k})_{k\ge 0}$ is a strictly increasing sequence of integers such that the set $\{n_{k}\;\,;\,k\ge 0\}$ has positive upper density,
 $(n_{k})_{k\ge 0}$ is a \js.
\end{example}

\begin{proof}
We suppose again that $n_0=1$. Since $D=\{n_{k} \textrm{ ; } k\ge 0\}$ has positive upper density, there exists
$\delta \in(0,1)$
and a strictly increasing sequence $(N_{r})_{r\ge 1}$ of integers such that $\#[1, N_{r}]\cap D\geq \delta
\,N_{r}$  for
every $r\geq 1$. Fix $0<\theta_{0}<\delta\pi$, and let $\gamma>0$ be such that $\theta_{0}+\gamma<\delta\pi$. For any $\lambda\in\T$ with $\lambda=e^{i\theta}$, $0<\theta<\pi$, $\theta/\pi$ irrational, we have by the uniform distribution of the sequence $(\lambda^{n})_{n\ge 0}$ in $\T$ that $$\#\{1\le n\le N_{r}\textrm{ ; } \lambda^{n}\in (e^{-i\theta_0}, e^{i\theta_0})\}
\le N_{r}\frac{2\theta_{0}+2\gamma}{2\pi}$$ for $r$ sufficiently large.
Hence $$\#\{1\le n\le N_{r}\textrm{ ; } \lambda^{n}\not\in (e^{-i\theta_0}, e^{i\theta_0})\}
\ge N_{r}\frac{2\pi-2\theta_0-2\gamma}{2\pi}=N_{r}(1-\frac{\theta_0+\gamma}{\pi})> N_{r}(1-\delta)$$ 
for all $r$ sufficiently large. It follows from the fact that $\#[1, N_{r}]\cap D\geq \delta
\,N_{r}$
that for all such $r$ there exists $1\le n\le N_{r}$ belonging to $D$ such that $\lambda^{n}$ does not belong to the arc $(e^{-i\theta_0}, e^{i\theta_0})$. We have thus proved that $\sup_{k\ge 0}|\lambda^{n_{k}}-1|\ge |e^{i\theta_0}-1|>0$ for every $\lambda=e^{i\theta}$ with $\theta/\pi$ irrational. By Corollary \ref{cor00}, $(n_{k})_{k\ge 0}$ is a \js.
\end{proof}

Condition $(6)$ of Corollary \ref{cor00}, which is weaker than Condition $(5)$ in Theorem \ref{th2},
is already used in \cite{RR} and \cite{BG1} in order to obtain \js s. We use it again in the
following example (cf. \cite{BG2}).
Recall that  a set $\Sigma = (\sigma_k)_{k\ge 0}$ of real numbers is said to be
 \emph{dense modulo} $1$ if the set
$\Sigma + \Z = \{\sigma_k + n : k \ge 0, n\in \Z\}$
is dense in $\R$. For any $\eta > 0$, the set $\Sigma$ is said
to be $\eta$-\emph{dense modulo} $1$ if the set
$\Sigma+\Z$ intersects every open sub-interval of $\R$ of length greater than $ \eta$.

\begin{example}\label{ex5bis'}
 Let $(n_k)_{k\ge 0}$ be a strictly increasing sequence of integers. If
there exists a number $0<\eta < 1$
such that the set
$$D_{\eta} = \set{\theta\in \R : (n_k\theta)_{k\ge 0} \text{ is not } \eta\text{-dense modulo } 1}$$
is countable, then $(n_k)_{k\ge 0}$ is a \js.
\end{example}

\begin{proof}
We can suppose that $n_0 = 1$. Let $I$ be a closed
sub-arc of $\T$, not containing the point $1$, of length $2\pi\eta<2\pi$. If $\lambda= e^{2i\pi\theta }$ is such that 
$\theta$ does not
belong to $D_{\eta}$, there exists a $k\ge 0$ such that $\lambda ^{n_{k}}$
belongs to $I$. Hence there exists $\varepsilon >0$ such that  $\sup_{k\geq 0}|\lambda ^{n_{k}}-1|\geq \varepsilon $ for every
such $\lambda $. The set $\Lambda _{\varepsilon }$ defined in (\ref{eqajoutee})
is thus countable for this particular choice of $\varepsilon $, and hence $(n_{k})_{k\ge 0}$ is a
\js.
\end{proof}

In particular, if $(n_k\theta)_{k\ge 0}$ is dense modulo $1$ for every irrational
$\theta$ (for instance, if $(n_k\theta)_{k\ge 0}$ is uniformly distributed modulo $1$ for every irrational $\theta$), then $(n_k)_{k\ge 0}$ is a \js.

\subsection{Sequences which are not \js s}
The fact that all the sets $\Lambda _{\varepsilon }$, $\varepsilon>0$, are uncountable as soon as all of them are
 non-trivial 
 greatly simplifies the task of exhibiting non-\js s,
in the sense that we only have to construct one non-trivial point belonging to each one of the sets $\Lambda_\varepsilon$ instead of uncountably many
ones. We recall below the following example of non-\js s:

\begin{example}[\cite{BG1}]\label{ex6}
Let $(n_{k})_{k\ge 0}$ be a strictly increasing sequence of integers such that $\frac {n_{k+1}}{n_{k}}$ tends to
infinity as $k$ tends to infinity. Then $(n_{k})_{k\ge 0}$ is not a \js.
\end{example}

\begin{proof}
A first proof is given in \cite{BG1}, and a simplified new proof in \cite{BG2}. 
\end{proof}

In view of Example \ref{ex4} above, it is natural to ask whether a
sequence $(n_{k})_{k\ge 0} $ containing arithmetic progressions of
arbitrary length is necessarily a \js. It is not so, as shown in the next example which was pointed out
to us independently by Evgeny Abakumov and Vladimir M\"{u}ller.

\begin{example}\label{ex6bis}
If $(n_k)_{k\ge 0}$ is the strictly increasing sequence of integers defined by the condition $$\{n_{k}\,;\,k\ge 0\}=\bigcup_{r\geq 1}\{(r!)^{2}, 2(r!)^{2},
\ldots, r(r!)^{2}\},$$ then $(n_{k})_{k\ge 0}$ is not a \js, but it contains arithmetic
progressions of arbitrary length.
\end{example}

\begin{proof}
Since $n_{0}=1$, it suffices by Theorem \ref{th0} to exhibit for each $\varepsilon >0$ a number $\lambda \in\T
\setminus\{1\}$ such that $\sup_{k\geq 0}|\lambda ^{n_{k}}-1|<\varepsilon
$. Let $r_{0}$ be an integer such that $\frac {((r_{0}+1)!)^{2}}
{r_{0}(r_{0}!)^{2}}>
\frac {2\pi}{
\varepsilon }$, and let $\lambda =e^{i\theta }$, where
$\theta =\frac {2\pi}{((r_{0}+1)!)^{2}}$.
Then $\lambda ^{((r_{0}+1)!)^{2}}=1$ so that $\lambda ^{n_{k}}=1$ for
every $k$ such that $n_{k}\geq (r_{0}+1)!^{2}$.
 Now if $n_{k}< ((r_{0}+1)!)^{2}$, i.e. $n_{k}\leq r_{0}(r_{0}!)^{2}$,
$\lambda^{n_{k}} =e^{{i n_{k}\theta }}$ and
$$|\lambda ^{n_{k}}-1|\leq 2\pi \frac {n_{k}}{((r_{0}+1)!)^{2}}\leq
2\pi \frac {r_{0}(r_{0}!)^{2}}{((r_{0}+1)!)^{2}}<\varepsilon ,$$ so that
$\sup_{k\geq 0}|\lambda ^{n_{k}}-1|<\varepsilon
$.
\end{proof}

The same kind of proof shows that
the following conditions suffice for a sequence to be non-Jamison:

\begin{example}\label{ex7}
Let $(n_{k})_{k\geq 0}$ be a strictly increasing sequence of integers such that  $n_{k}$
divides $n_{k+1}$ for every $k\geq 0$, and $\overline{\textrm{lim}}_{\,k \to +\infty}\frac {n_{k+1}}{n_{k}}=+\infty $. Then $(n_{k})_{k\ge 0}$
is not a \js.
\end{example}

\begin{proof}
Again we can assume that $n_0 = 1$. Fix $\varepsilon >0$, and let $k_{0}\ge 1$ be
such that $\frac {n_{k_{0}+1}}{n_{k_{0}}}>
\frac {2\pi}{
\varepsilon }$. If $\lambda =e^{i\theta }$, where
$\theta =\frac {2\pi}{n_{k_{0}+1}}$, then
the divisibility assumption on the integers $n_{k}$ implies that $\lambda ^{n_{k}}=1$
for every $k>k_{0}$. Also,
$\lambda^{n_{k}} =e^{{i n_{k}\theta }}$ for every $0\le k\leq k_{0}$, and we obtain in the same
way as in Example \ref{ex6bis} above that
$\sup_{k\geq 0}|\lambda ^{n_{k}}-1|<\varepsilon
$.
\end{proof}

As a corollary, we obtain our last example in this section:

\begin{example}\label{cor1}
Let $(n_{k})_{k\geq 0}$
 be a strictly increasing sequence of integers such that  $n_{k}$
divides $n_{k+1}$ for every $k\geq 0$. Then $(n_{k})_{k\ge 0}$
is  a \js\ if and only if 
 $\sup_{k\ge 0} \frac {n_{k+1}}{n_{k}} $ is finite.
\end{example}
 
The notion of \js\ can be extended, in a natural way, to that of  Jamison subset of $\Z$. As Jamison sets will play an important role in our forthcoming study of \ka\ subsets of $\Z$, we give briefly in Section \ref{sec:3} below the definitions and results which will be needed.

 \section{Jamison sets in $\Z$}\label{sec:3}
 Taking into account that power bounded operators are nothing but bounded representations of the semigroup $\N$, the following definition of Jamison sets in $\Z$ is quite natural. This definition first appeared in \cite{Dev2}, in the more general context of the study of Jamison subsets of certain abelian groups.
 
 \begin{definition}\label{DefJamZ}
 A subset $Q$ of $\Z$ is said to be a \emph{Jamison set} in $\Z$ if it has the following property: whenever $T$ is a bounded invertible operator on a complex separable Banach space $X$ such that 
$\sup_{n\in Q}||T^{n}||$ is finite, the unimodular point spectrum $\sigma _{p}(T)\cap \T$ of $T$ is countable.
\end{definition}
 
Jamison subsets $Q$ of $\Z$ can be characterized in exactly the same way as \js s were characterized in Theorem \ref{th0} above.
 We will need to assume in what follows that $1$ belongs to $\qq$ (or, more generally, that the subgroup generated by $Q$ is the whole of $\Z$).
 
 \begin{theorem}\label{th0bis}
 Let $\qq$ be a subset of $\Z$ containing the point $1$. The following assertions are equivalent:
\begin{itemize}
\item[(1)] $Q$ is a Jamison set in $\Z$;

\item[(2)] there exists a positive real number $\varepsilon $ such that
for every $\lambda \in\T\setminus \set{1}$, $$\sup_{n\in Q}|\lambda ^{n}-1|
\geq \varepsilon .$$
\end{itemize}
 \end{theorem}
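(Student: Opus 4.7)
The plan is to follow the pattern of Theorems \ref{th0} and \ref{th2}, with adjustments for the bilateral nature of $\Z$ and the invertibility requirement of Definition \ref{DefJamZ}. A useful preliminary observation is that $|\lambda^n - 1| = |\lambda^{-n} - 1|$ for $\lambda \in \T$ and $n \in \Z$, so that condition (2) for $\qq$ is equivalent to the corresponding condition for the set $\{|n|\,:\,n \in \qq\}\setminus\{0\}\subseteq \N$; enumerating this set as an increasing sequence $(m_k)_{k \ge 0}$ with $m_0 = 1$, condition (2) for $\qq$ is precisely condition (2) of Theorem \ref{th0} for $(m_k)$.

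For $(2) \Rightarrow (1)$, I would argue directly: suppose (2) holds with constant $\varepsilon > 0$, and let $T \in \mathcal{B}(X)$ be invertible with $M := \sup_{n \in \qq} \|T^n\| < +\infty$. For distinct $\lambda, \mu \in \sigma_p(T) \cap \T$ with normalized eigenvectors $x_\lambda, x_\mu$, the identity $(T^n - \lambda^n I)(x_\mu - x_\lambda) = (\mu^n - \lambda^n)\,x_\mu$ yields $|\mu^n - \lambda^n| \le (M+1)\,\|x_\mu - x_\lambda\|$ for every $n \in \qq$. Since $|\mu^n - \lambda^n| = |(\mu/\lambda)^n - 1|$ on $\T$ and $\mu/\lambda \ne 1$, hypothesis (2) applied to $\mu/\lambda$ gives $\|x_\mu - x_\lambda\| \ge \varepsilon/(M+1)$, and this uniform separation forces $\sigma_p(T) \cap \T$ to be countable in the separable space $X$.

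For $(1) \Rightarrow (2)$, assuming (2) fails, I would replicate the renorming construction sketched in Section \ref{sec:2} in a bilateral setting. Let $H = \{x = (x_j)_{j \in \Z} : \sum_{j\in\Z} |x_j|^2/(j^2+1) < +\infty\}$ and let $S$ be the bilateral shift $(Sx)_j = x_{j+1}$, which is bounded and invertible on $H$ and admits $e_\lambda := (\lambda^j)_{j\in\Z}$ as an eigenvector for the eigenvalue $\lambda \in \T$. Define
\[
\|x\|_{new} = \max\left(\|x\|,\ \sup_{j \ge 0} 2^{-(j+1)} \sup_{n_0,\ldots,n_j \in \qq} \left\|\prod_{l=0}^j (S^{n_l} - I)\,x\right\|\right),
\]
set $X_{new} = \{x \in H : \|x\|_{new} < +\infty\}$, and let $T$ be the operator induced by $S$ on $X_{new}$. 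Because $S$ and $S^{-1}$ commute with every factor $(S^{n_l} - I)$, both are bounded with respect to $\|\cdot\|_{new}$, so $T$ is invertible on $X_{new}$; and the combinatorial estimate from \cite{BG2} transfers verbatim to give $\sup_{n \in \qq} \|T^n\|_{new} \le 3$ for every $n \in \qq$, regardless of sign. After verifying $\|e_\lambda\|_{new} = \|e_\lambda\|$ and the existence of $C>0$ with $\|e_\lambda - e_\mu\|_{new} \le C\,d_\qq(\lambda,\mu)$ (where $d_\qq(\lambda,\mu) := \sup_{n \in \qq}|\lambda^n - \mu^n|$), the failure of (2) via the analog of Corollary \ref{cor00} yields an uncountable $K \subseteq \T$ for which $(K, d_\qq)$ is separable. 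The closed span $X_{new}^K := \overline{\textrm{span}}\{e_\lambda : \lambda \in K\}$ is then separable and invariant under both $T$ and $T^{-1}$, and the restriction of $T$ to it is invertible with $\sup_{n \in \qq}\|T^n\| \le 3$ and unimodular point spectrum containing $K$, contradicting (1). The main obstacle will be checking that the weight and product estimates of \cite{BG2}, set up for a unilateral shift on $\N$-indexed sequences, transfer without change to the $\Z$-indexed setting; this should go through thanks to the commutation of $S^{\pm 1}$ with each $(S^{n_l} - I)$, which is what simultaneously delivers the invertibility of $T$ and the uniform bound on $\|T^n\|_{new}$ for $n \in \qq$ of either sign.
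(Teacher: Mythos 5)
Your proposal is correct and follows essentially the same route as the paper: the $(2)\Rightarrow(1)$ direction via the uniform $\varepsilon/(M+1)$-separation of normalized eigenvectors, and the $(1)\Rightarrow(2)$ direction by transferring the renorming construction of \cite{BG2} to the bilateral shift on the weighted $\Z$-indexed Hilbert space, noting (as the paper does implicitly) that the commutation of $S^{\pm1}$ with the factors $(S^{n_l}-I)$ gives both the invertibility of $T$ and the partial power-bound, and then passing to the separable closed span $X_{new}^K$. The only cosmetic caveat is that your opening reduction to $\{|n|:n\in Q\}\setminus\{0\}$ applies to condition (2) alone and cannot be used to deduce condition (1) from Theorem \ref{th0}, since $\sup_{n\in Q}\|T^n\|<\infty$ for an invertible $T$ does not control $\sup_k\|T^{|n_k|}\|$; but you do not in fact rely on that reduction in the body of the argument.
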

 
 \begin{proof}
 The proof of Theorem \ref{th0bis} is very similar to that of Theorem \ref{th0}, as given in \cite{BG2}. Let $\qq$ be a subset of $\Z$ containing $1$. The easy implication is the following: if we have, for some $\varepsilon>0$, $\sup_{\nq}|\lambda ^{n}-1|\ge \varepsilon $ whenever $\lambda \in\T\setminus \{1\}$, then $\qq$ is a Jamison set in $\Z$. Indeed, let $X$ be a \sep\ Banach space, and let $T\in \mathcal{B}(X)$ be
an invertible operator which is partially power-bounded \wrt\ $Q$. We set $M=\sup_{{\nq}} ||T^{n}||$. If $\lambda $
and $\mu $ are two different \eva s of $T$, let $e_{\lambda }$ and $e_{\mu }$
be two associated \eve s of $T$ with $||e_{\lambda }||=||e_{\mu }||=1$. Then $T^pe_{\lambda} = \lambda^pe_{\lambda}$ for every $p\in \Z$.
We have $$||e_{\lambda }-e_{\mu }||
\geq \frac {1}{M+1}|\lambda ^{n_{k}}-\mu ^{n_{k}}|$$ for every $k\geq 0$.
Since $\lambda \not=\mu $, $$||e_{\lambda }-e_{\mu }||
\geq \frac {\varepsilon }{M+1}$$ by our assumption, and the separability
of $X$ implies that $\sigma _{p}(T) \cap \T$ is countable.
\par\smallskip
The proof of the converse implication follows the same method as in \cite{BG2}. We present only the (very minor) necessary changes. Starting from the Hilbert space
$$H =\{(x_{j})_{j\in \Z} \textrm{ ; }
||x||=\left(\sum_{j\in \Z} \frac
{|x_{j}|^{2}}{j^{2}+1}\right)^{\frac {1}{2}}<+\infty \},$$ we consider the bilateral shift
$S $ on $H$
and its \eve s $e_{\lambda }=(\ldots, \lambda^{-2},\lambda^{-1}, 1,\lambda , \lambda ^{2}, \ldots)$, $\lambda \in\T$. 
We define a new norm on this space $H$ by setting
$$\left|\left|x\right|\right|_{new}=
\max\left( \left|\left|x\right|\right|, \,\sup_{j\ge 0} \; 2^{-(j+1)}
\sup_{n_{{0}}, \ldots, n_{{j}}\in Q}
 \left|\left|
\prod_{l=0}^{j} (S^{n_{l}}-I)x\right|\right|\right),$$
and define $X_{new}=\{x\in X \textrm{ ; } ||x||_{new}<+\infty \}$. Then
 $e_{\lambda }$ belongs to $X_{new}$ for every
$\lambda \in\T$, and the invertible \op\ $T$ induced by $S$ on a suitable closed subspace of
$X_{new}$ produces the desired example of a partially power-bounded operator \wrt\ $Q$ on a separable Banach space
with uncountable unimodular point spectrum.
 \end{proof}
 
 A constant $\varepsilon >0$ for which assertion (2) of Theorem \ref{th0bis} above holds true is called a \emph{Jamison constant} for $\qq$.
The same proof as that of \cite[Th.\,2.1]{BG2} shows the following result:

\begin{theorem}\label{The0Ter}
 If $\qq$ is a subset of $\Z$ containing $1$ which is not a Jamison set, there exists for every $\varepsilon >0$ an (uncountable) perfect compact set of elements $\lambda \in\T$ such that $\sup_{\nq}|\lambda ^{n}-1|<\varepsilon $.
\end{theorem}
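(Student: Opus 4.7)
\emph{Approach.} The plan is to construct a continuous injective map from the Cantor space $\{0,1\}^{\N}$ into $\Lambda_\varepsilon$ (defined in (\ref{eqajoutee})). Since a continuous injection from a compact Hausdorff space into a Hausdorff space is automatically a homeomorphism onto its image, the image will be a perfect compact subset of $\Lambda_\varepsilon$, as required.

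\emph{Building blocks.} By the contrapositive of the easy implication $(2)\Rightarrow(1)$ in Theorem \ref{th0bis}, the hypothesis that $\qq$ is not a Jamison set means that $\Lambda_\delta \setminus \{1\}$ is non-empty for every $\delta > 0$. Moreover, since $1 \in \qq$, every $\lambda \in \Lambda_\delta$ satisfies $|\lambda - 1| \leq \sup_{\nq}|\lambda^n - 1| < \delta$, so elements of $\Lambda_\delta \setminus \{1\}$ can be found with $|\lambda - 1|$ arbitrarily small. I would fix a summable sequence $(\delta_k)_{k\geq 1}$ of positive reals with $\sum_k \delta_k < \varepsilon$ and inductively choose $\lambda_k \in \Lambda_{\delta_k} \setminus \{1\}$ satisfying the geometric decay property $|\lambda_{k+1}-1| \leq \frac{1}{10}|\lambda_k - 1|$ (which is possible after shrinking $\delta_{k+1}$ further if needed). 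For each $\eta = (\eta_k) \in \{0,1\}^{\N}$, set
$$\lambda_\eta = \prod_{k\geq 1} \lambda_k^{\eta_k} \in \T,$$
the product converging absolutely thanks to $\sum_k|\lambda_k-1|<\infty$. A telescoping estimate gives, for every $\nq$,
$$|\lambda_\eta^n - 1| \leq \sum_{k\geq 1} \eta_k\,|\lambda_k^n - 1| \leq \sum_{k\geq 1} \delta_k < \varepsilon,$$
so $\lambda_\eta \in \Lambda_\varepsilon$, and the map $\eta \mapsto \lambda_\eta$ is continuous from the compact Cantor space $\{0,1\}^{\N}$ into $\T$.

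\emph{Main obstacle: injectivity.} The delicate point is to verify that $\eta \mapsto \lambda_\eta$ is one-to-one. Given $\eta \neq \eta'$, let $k_0$ be the first index of disagreement and, without loss of generality, assume $\eta_{k_0}=1$, $\eta'_{k_0}=0$. Factoring out the common prefix $C = \prod_{j<k_0} \lambda_j^{\eta_j}$, which has modulus $1$, and telescoping the tails $T = \prod_{j>k_0}\lambda_j^{\eta_j}$, $T' = \prod_{j>k_0}\lambda_j^{\eta'_j}$ via $\lambda_{k_0}T - T' = (\lambda_{k_0}-1)T + (T-1) - (T'-1)$, one obtains the lower bound
$$|\lambda_\eta - \lambda_{\eta'}| \geq |\lambda_{k_0} - 1| - 2\sum_{j > k_0}|\lambda_j - 1|.$$
The geometric decay condition forces $\sum_{j > k_0}|\lambda_j - 1| \leq \frac{1}{9}|\lambda_{k_0}-1|$, so $|\lambda_\eta - \lambda_{\eta'}| \geq \frac{7}{9}|\lambda_{k_0}-1| > 0$; this establishes injectivity and thus produces the desired perfect compact subset of $\Lambda_\varepsilon$. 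The crux of the argument is precisely this quantitative bookkeeping: one must control $|\lambda_k - 1|$ directly, and not merely the slacker bound $\sup_{\nq}|\lambda_k^n - 1|$, to prevent the tail of the infinite product from swamping the leading discrepancy at step $k_0$.
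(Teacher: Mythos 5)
Your proof is correct and takes essentially the same route as the one the paper refers to, namely that of \cite[Th.\,2.1]{BG2} (see also \cite[Prop.\,2.1]{RR}): pick a rapidly decreasing sequence of ``atoms'' $\lambda_k$ close to $1$, each with $\sup_{n\in Q}|\lambda_k^n-1|$ small, and build a Cantor-like perfect set out of subproducts $\prod_k\lambda_k^{\eta_k}$. Your quantitative decay condition $|\lambda_{k+1}-1|\le\frac{1}{10}|\lambda_k-1|$, together with the observation (using $1\in Q$) that $|\lambda_k-1|<\delta_k$, correctly guarantees both that the map $\eta\mapsto\lambda_\eta$ lands inside $\Lambda_\varepsilon$ and that it is injective, so the image is indeed a perfect compact subset of $\Lambda_\varepsilon$.
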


An important consequence of Theorem \ref{The0Ter} is the following: suppose that $\qq$ is a subset of $\Z$ containing $1$ which has the following property: there exists $\varepsilon >0$ such that the set of $\lambda\in \T$ such that $\sup_{\nq}|\lambda ^{n}-1|<\varepsilon $ is countable. Then $\qq$ is a Jamison set. Thanks to this, one can show easily, in the same way as in \cite[Cor.\,2.3]{BG1} or Example \ref{ex5bis'} above, the following result:

\begin{example}\label{ex8}
Let $(n_{k})_{\kk}$ be a sequence of elements of $\Z$ such that $n_{0}=1$ and $(n_{k}\theta )_{\kk}$ is uniformly distributed modulo $1$ for every $\theta \in\R\setminus\Q$. Then $Q=\{n_{k}\,;\,\kk\}$ is a Jamison set in $\Z$.
\end{example}

We will come back to this example in Section \ref{Section 6}.

 \section{Kazhdan sets in $\Z$}\label{sec:4}
 
Before starting to discuss the particular case of the group $\Z$, let us say some words about \ka\ sets in general topological groups.
 
\subsection{Kazhdan sets in topological groups} We start by recalling briefly the relevant definitions, and refer the reader to \cite{BdHV} for more information. All the Hilbert spaces that we consider in this paper are complex, and all the unitary representations of topological groups are assumed to be strongly continuous.
 
\begin{definition}\label{Definition 1.1}
  Let $\qq$ be a subset of a topological group $G$, and let $\varepsilon >0$.
\begin{enumerate}
 \item [(i)] If $\pi $ is a unitary representation of $G$ on a Hilbert space $H$, a vector $x\in H$ is  said to be
\emph{$(\qq,\varepsilon )$-invariant} for $\pi $ if $\sup_{g\in\qq}||\pi (g)x-x||<\varepsilon ||x||$. In particular $x$ is non-zero. A \emph{$G$-invariant} vector for $\pi $ is a vector $x\in H$ such that $\pi (g)x=x$ for every $g\in G$;
\item[(ii)] the pair $(\qq,\varepsilon )$ is called a \emph{\ka\ pair in $G$} if any unitary representation $\pi $ of $G$ on a Hilbert space $H$ which admits a $(\qq,\varepsilon )$-invariant vector has a non-zero $G$-invariant vector;
\item[(iii)] the set $\qq$ is \emph{a \ka\ set in $G$} if there exists 
$\varepsilon >0$ such that $(\qq,\varepsilon )$ is a \ka\ pair in $G$. Such an $\varepsilon >0$ is called a \emph{\ka\ constant} for $Q$;
\item[(iv)] the group $G$ \emph{has \pt}\ if it admits a compact \ka\ set. 
\end{enumerate}
 \end{definition}

\pt\  is a rigidity property of topological groups which has been introduced by \ka\ \cite{Ka} in order to show that certain lattices in locally compact groups are finitely generated. It has striking applications to many subjects, as shown in the authoritative monograph \cite{BdHV} by Bekka, de la Harpe and Valette. Typical examples of groups with Property (T) are the groups $SL_{n}(K)$ for $n\ge 3$, where $K$ is a local field. As a lattice in a locally compact group has Property (T) if and only if the group itself has it, $SL_{n}(\Z)$ has Property (T) for $n\ge 3$. Property (T) and amenability are two opposite properties, and the only locally compact amenable groups with \pt\ are the compact ones.
\par\smallskip
It is not an easy task to exhibit explicit \ka\ pairs in groups with \pt, as pointed out by Serre and de la Harpe and Valette \cite{BdHV}. Discrete groups with Property (T) are finitely generated, and \ka\ sets in such groups can be completely described: these are the generating subsets of the group \cite[Prop.~1.3.2]{BdHV}. See the references \cite{Sha2}, \cite{Ka}, \cite{Be} or \cite{Neuh} for some examples of constructions of explicit \ka\ sets in groups with \pt. When the group does not have \pt, it is also an interesting and difficult problem to exhibit ``small'' --although non-compact-- \ka\ subsets of the group (we remind that the pair $(G,\sqrt{2})$ is always a \ka\ pair in a topological group $G$, see \cite[Prop.~1.1.5]{BdHV}). This problem is mentioned in \cite[Sec.~7.12]{BdHV}, where, more precisely, the following two questions are stated. The second question is attributed in \cite[Sec.~7.12]{BdHV} to Y. Shalom.

\begin{question}[\mbox{\cite[Sec.~7.12]{BdHV}}]\label{Que2}
 Given a non-compact amenable group $G$, is it possible to characterize the \ka\ sets in $G$? What are the \ka\ sets in the groups $\Z^{d}$ or $\R^{d}$, $d\ge 1$?
\end{question}

\begin{question}[Shalom, \mbox{\cite[Sec.~7.12]{BdHV}}]\label{Que4}
Given a sequence $(n_{k})_{k\ge 0}$ of elements of $\Z$,
 what are the relations between the equi\-distribution properties modulo $1$ of the sequences $(n_{k }\theta)_{k\ge 0}$, $\theta\in \R\setminus \Q$, and the fact that $\{n_{k}\textrm{ ; } k\ge 0\}$ is a \ka\ set in $\Z$?
\end{question}

In the paper \cite{BaGr}, we answered Question \ref{Que4} by showing that if $(n_{k})_{\kk}$ is a sequence of elements of $\Z$ such that $(n_{k}\theta )_{\kk}$ is uniformly distributed modulo $1$ for every irrational number $\theta $, then $\{n_{k}\,;\,\kk\}$ is a \ka\ set in $\Z$ as soon as the subgroup it generates is $\Z$ itself. This result is a particular case of a much more general statement (\cite[Th.~2.1]{BaGr}), valid in any second countable locally compact Moore group $G$: if $(g_{k})_{k\ge 0}$ is a sequence of elements of $G$ satisfying a suitable equidistribution condition which generalizes that of Question \ref{Que4}, $\{g_{k}\,;\,k\ge 0\}$ is a \ka\ set in $G$ as soon as it generates $G$. Concerning Question \ref{Que2}, we study in \cite[Sec.~6]{BaGr} \ka\ sets in locally compact abelian groups, and provide a complete characterization of \ka\ sets in such groups, as well as in the Heisenberg groups, and the group $\textrm{Aff}_+(\R)$ of orientation-preserving homeomorphisms of $\R$.
\par\medskip 
The proofs of our main results in \cite{BaGr} rely on a new criterion for a subset $\qq$ of a topological group $G$ to admit a ``small perturbation'' which is a \ka\ set in $G$ (\cite[Th.~2.3]{BaGr}). As a particular case, we obtain the following necessary and sufficient condition for subsets of a locally compact group which generate the group to be \ka\ sets:

\begin{theorem}[\mbox{\cite[Th.~2.5]{BaGr}}]
 Let $G$ be a locally compact group, and let $\qq$ be a subset of $G$ which generates $G$. Then $\qq$ is a \ka\ set in $G$ if and only if the following property holds true: there exists $\varepsilon >0$ such that any unitary representation $\pi $ of $G$ on a Hilbert space $H$ with a $(\qq,\varepsilon )$-invariant vector has a finite-dimensional subrepresentation.
\end{theorem}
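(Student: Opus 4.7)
The forward implication is immediate: if $Q$ is a \ka\ set with constant $\varepsilon$, then any unitary representation $\pi$ with a $(Q,\varepsilon)$-invariant vector has a nonzero $G$-invariant vector, whose linear span is a one-dimensional (in particular finite-dimensional) $G$-invariant subspace. I focus on the converse. Assuming the stated property with some constant $\varepsilon_0 > 0$, the plan is to show that $Q$ is a \ka\ set with a suitably chosen smaller constant $\delta > 0$.

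The first step is to split off the weakly mixing part of a representation. Given a unitary representation $\pi$ of $G$ on $H$ with a $(Q,\delta)$-invariant vector $v$, decompose orthogonally $H = H_f \oplus H_w$, where $H_f$ is the closed linear span of all finite-dimensional $G$-invariant subspaces of $H$ and $H_w = H_f^{\perp}$. Both subspaces are $G$-invariant, and by maximality of $H_f$ the restriction $\pi_w$ of $\pi$ to $H_w$ admits no finite-dimensional subrepresentation. The hypothesis applied to $\pi_w$ thus forces $\pi_w$ to have no $(Q,\varepsilon_0)$-invariant vector. Writing $v = v_f + v_w$ with $v_f \in H_f$ and $v_w \in H_w$, and using $\|\pi_w(g)v_w - v_w\| \le \|\pi(g)v - v\| < \delta\|v\|$ for every $g \in Q$, I obtain $\varepsilon_0 \|v_w\| \le \delta \|v\|$, so that when $\delta \ll \varepsilon_0$ the vector $v_f$ carries essentially all the mass of $v$ and, after rescaling, is a $(Q, c\delta)$-invariant vector in $\pi_f$ for some absolute constant $c$.

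It then remains to establish the key claim: there is a universal $\delta_0 > 0$ such that no Hilbert direct sum of non-trivial finite-dimensional irreducible unitary representations of $G$ admits a $(Q, \delta_0)$-invariant vector. Since every finite-dimensional unitary representation of $G$ factors through the canonical morphism $\iota: G \to bG$ into the Bohr compactification of $G$, this is really a statement about unitary representations of the compact group $bG$; the assumption that $Q$ generates $G$ forces $\iota(Q)$ to topologically generate $bG$. Once such a $\delta_0$ is known, choosing $\delta$ so that $c\delta < \delta_0$ yields the \ka\ property.

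The main obstacle, and the crux of the argument, is the construction of this $\delta_0$. Compactness of $bG$ alone does not suffice, since words of bounded length in $\iota(Q)$ need not cover $bG$ (already for $G = \Z$ and $Q = \{1\}$). Following the strategy announced in the introduction --- \emph{convolutions of measures become tensor products of representations} --- I would mimic the proof of Theorem \ref{Theorem 2} by working with tensor powers of $\pi_f$, exploiting the fact that a tensor product of finite-dimensional unitary representations again decomposes into finite-dimensional pieces, and combining this with an averaging/projection procedure analogous to the use of convolution with Haar measure on $\T$ in the $\Z$ setting. The expected dichotomy is that this procedure either extracts a $G$-invariant vector in $\pi_f$, or produces from $\pi_f$ a weakly mixing representation that still carries a $(Q, \varepsilon_0)$-almost invariant vector, contradicting the hypothesis.
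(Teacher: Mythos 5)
Your outline captures the right skeleton but leaves the heart of the argument as a sketch, and the single claim on which everything hinges --- your ``universal $\delta_0$'' in step 3 --- is not a consequence of the theorem's hypothesis in the direct way you seem to be suggesting. A Hilbert sum of nontrivial finite-dimensional irreducibles \emph{automatically} has finite-dimensional subrepresentations, so the hypothesis places no constraint whatsoever on such $\pi_f$; the hypothesis becomes useful only after you have built from $\pi_f$ a representation with \emph{no} finite-dimensional subrepresentation that still carries an almost-invariant vector, and that construction is exactly where all the work lies. Your sketch (``mimic the convolution argument via tensor products'') points in the correct direction, but it omits the ingredient that makes the $\Z$ proof go through: the dichotomy between the Jamison and non-Jamison cases. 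In Case 2 of the proof of Theorem~\ref{Theorem 2}, the Jamison constant $\varepsilon_0$ is precisely what forces the atoms $a_\lambda$ of the extracted discrete measures to be small (bound (c) of Lemma~\ref{Lem12}), and that smallness is what makes the infinite convolution continuous. In representation-theoretic language, some analogous quantitative control on the multiplicities/weights of the irreducible constituents of $\pi_f$ is needed to make the infinite tensor product weakly mixing, and nothing in your proposal supplies it. Also note that $\delta_0$ cannot be ``universal'': it must depend on the $\varepsilon_0$ from the hypothesis.

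For the record, the paper does not prove this statement itself; it is quoted from \cite{BaGr}, and what the paper proves in full is the $\Z$-specialization (Theorem~\ref{Theorem 2}) by a measure-theoretic argument on $\T$. Your $H_f\oplus H_w$ splitting is the correct representation-theoretic counterpart of the discrete/continuous decomposition of a measure, your treatment of the weakly mixing part via the hypothesis is fine, and the forward implication is indeed trivial. What is missing is a genuine proof of the claim about $\pi_f$: a contradiction argument in the style of the proof of Theorem~\ref{Theorem 2}, which takes a sequence of almost-invariant vectors in $\pi_f$-type representations at scales $\varepsilon_p\to 0$, extracts suitably small ``atoms'' using the Jamison-type separation constant, and tensor-multiplies (the analogue of infinite convolution, living on a von Neumann incomplete tensor product) to manufacture a weakly mixing representation with a $(Q,\varepsilon_0)$-invariant vector. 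Without that construction, step~3 is an assertion, not a proof.
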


  As the results of \cite{BaGr} hold in a very general framework, their proofs involve some abstract tools: a key ingredient is an abstract version of the classical Wiener Theorem (\cite[Th.~3.7]{BaGr}) which, given a unitary representation $\pi $ of an arbitrary group $G$ on a Hilbert space $H$, yields an explicit expression for the quantities $m_{G}(|\pss{\pi (\,.\,)x}{y}|^{2})$, where $x$ and $y$ are two vectors of $H$. The notation $m_{G}$ represents here the unique invariant mean on the space $W\!AP(G)$ of weakly almost-periodic functions of $G$. The proof of Theorem~2.5 in \cite{BaGr} proceeds by contradiction, and uses this general version of the Wiener Theorem to show that certain infinite tensor products of unitary representations are weakly mixing. The spaces supporting these representations are incomplete tensor products of Hilbert spaces, which were first constructed by von Neumann in \cite{vN}, and whose study was later on taken up by Guichardet in \cite{Gui}.
 
 \subsection{Characterizing \ka\ sets in $\Z$}\label{Section 3} 
 The definition of a \ka\ set in $\Z$ can immediately be reformulated in the following way:

\begin{fact}\label{Fact 2.1}
 Let $\qq$ be a subset of $\Z$. The following assertions are equivalent:
\begin{enumerate}
 \item [(a)] $\qq$ is a \ka\ set in $\Z$;
\item[(b)] there exists $\varepsilon >0$ such that any unitary operator $U$ acting on a complex separable Hilbert space $H$ satisfies the following property: if there exists a vector $x\in H$ with $||x||=1$ such that $\sup_{\nq}||U^{n}x-x||<\varepsilon $, then there exists a non-zero vector $y\in H$ such that $Uy=y$ (i.e.\ $1$ is an eigenvalue of $U$).
\end{enumerate}
\end{fact}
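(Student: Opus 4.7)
The plan is to recognize that unitary representations $\pi$ of $\Z$ on a Hilbert space $H$ stand in bijection with unitary operators $U \in \mathcal{B}(H)$ via $U = \pi(1)$ and $\pi(n) = U^n$ for every $n \in \Z$. Under this correspondence, a non-zero $\Z$-invariant vector for $\pi$ is precisely a non-zero fixed vector of $U$, that is, an eigenvector of $U$ associated with the eigenvalue $1$. The equivalence of (a) and (b) then reduces to handling two minor adjustments: the normalization of the almost-invariant vector, and the passage from arbitrary Hilbert spaces to separable ones. In both directions the constant $\varepsilon$ can be taken to be the same.

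For (a)$\Rightarrow$(b), I would take $\varepsilon > 0$ to be a Kazhdan constant for $Q$ in the sense of Definition \ref{Definition 1.1}. Given a separable Hilbert space $H$, a unitary operator $U$ on $H$, and a unit vector $x \in H$ with $\sup_{n \in Q}\|U^n x - x\| < \varepsilon$, the condition $\sup_{n\in Q}\|U^{n}x-x\|<\varepsilon=\varepsilon\,\|x\|$ says exactly that $x$ is a $(Q,\varepsilon)$-invariant vector for the representation $n \mapsto U^n$. The Kazhdan property then furnishes a non-zero $\Z$-invariant vector $y$, and $\pi(1)y = y$ is precisely the desired identity $Uy = y$.

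For (b)$\Rightarrow$(a), let $\varepsilon > 0$ be the constant given by (b), and let $\pi$ be an arbitrary unitary representation of $\Z$ on a Hilbert space $H$ (not assumed to be separable) admitting a $(Q,\varepsilon)$-invariant vector. Dividing by its norm, we may assume this vector $x$ has $\|x\|=1$ and satisfies $\sup_{n \in Q}\|\pi(n)x - x\| < \varepsilon$. The key step is to pass to the cyclic subspace $H_0 = \overline{\vect}\{\pi(n)x : n \in \Z\}$, which is separable (being the closed linear span of a countable family), contains $x$, and is stable under every $\pi(k)$ since $\pi(k)\pi(n)x = \pi(n+k)x \in H_0$. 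The restriction $U_0 = \pi(1)|_{H_0}$ is therefore a unitary operator on the separable Hilbert space $H_0$, for which $x$ is still a unit vector satisfying $\sup_{n \in Q}\|U_0^n x - x\| < \varepsilon$. Applying (b) produces a non-zero $y \in H_0$ with $U_0 y = y$, which is exactly a non-zero $\Z$-invariant vector for the original representation $\pi$.

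The proof is essentially a routine unpacking of definitions; the only substantive point is the separability reduction via the cyclic subspace construction in the direction (b)$\Rightarrow$(a), which is what allows one to restrict attention to separable spaces without losing any information about the Kazhdan constant.
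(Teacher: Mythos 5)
Your proof is correct, and in the (b)$\Rightarrow$(a) direction it takes a cleaner route than the paper. The paper decomposes $H$ abstractly as $H=\oplus_{i\in I}H_{i}$ into cyclic pieces (not built around the given vector $x$), writes $x=\oplus_{i\in I}x_{i}$, and then cuts off to a finite subset $I_{0}\subset I$ where $\sum_{i\in I_{0}}\|x_{i}\|^{2}$ is close to $1$, so as to land in a separable subspace; this costs a factor of $2$ in the constant (the paper starts from $\sup_{n\in Q}\|U^{n}x-x\|<\varepsilon/2$). You instead pass directly to the $\Z$-cyclic subspace $H_{0}=\overline{\vect}\{U^{n}x:n\in\Z\}$ generated by $x$ itself, which is separable, contains $x$, and is invariant under both $U$ and $U^{-1}$ (so that $U|_{H_{0}}$ is unitary); this lands immediately in the separable setting with no loss of the Kazhdan constant. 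The underlying idea (reduce to cyclic subspaces, which are separable) is the same, but your choice of cyclic subspace avoids the finite cut-off and the constant degradation. The only point worth making explicit, which you do implicitly via the $\Z$-action, is that $H_{0}$ must be invariant under $U^{-1}$ as well as $U$ for the restriction to be unitary; this is automatic here because the cyclic span is taken over all of $\Z$. The (a)$\Rightarrow$(b) direction is indeed a routine unpacking of the definition, as you say, and the paper does not spell it out either.
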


As any cyclic unitary operator lives on a separable Hilbert space, it suffices to consider in (b) separable Hilbert spaces. Suppose indeed
that (b) holds true, and let $U$ be a unitary \op\ acting on a complex (possibly non-separable) Hilbert space $H$. Decomposing $U$ as a direct sum of cyclic unitary \op s, we can write $H$ as $H=\oplus_{i\in I}H_{i}$, where $H_{i}$ is separable for every $i\in I$, on which $U$ acts as $U=\oplus_{i\in I}U_{i}$, $U_{i}$ being a unitary \op\ on $H_{i}$ for every $i\in I$. Suppose that $x=\oplus_{i\in I}x_{i}\in H$ is such that $||x||=1$ and $\sup_{n\in Q}||U^{n}x-x||<\varepsilon/2$ (where $\varepsilon>0$ is given by (b) above). This means that $\sum_{i\in I}||x_{i}||^{2}=1$ and $\sup_{n\in Q}\sum_{i\in I}||U_{i}^{n}x_{i}-x_{i}||^{2}<\varepsilon^{2}/4$. Let $I_{0}$ be a finite subset of $I$ (depending of course on $x$) such that $\sum_{i\in I_{0}}||x_{i}||^{2}>1-\varepsilon^{2}/16$. Then $$\sum_{i\in I_{0}}||U_{i}^{n}x_{i}-x_{i}||^{2}\le \sum_{i\in I}||U_{i}^{n}x_{i}-x_{i}||^{2}+4\sum_{i\in I\setminus I_{0}}||x_{i}||^{2}\quad \textrm{for every }n\in\Z,$$ so that $$\sup_{n\in Q} \sum_{i\in I_{0}}||U_{i}^{n}x_{i}-x_{i}||^{2}<\varepsilon^{2}/2<\frac{\varepsilon^{2}}{2(1-\varepsilon^{2}/16)}\sum_{i\in I_{0}}||x_{i}||^{2}<\varepsilon^{2}\sum_{i\in I_{0}}||x_{i}||^{2}.$$ Hence we get that
$$\sup_{n\in Q}\left(\sum_{i\in I_{0}}||U_{i}^{n}x_{i}-x_{i}||^{2}\right)^{\frac{1}{2}}<\varepsilon
\left(\sum_{i\in I_{0}}||x_{i}||^{2}\right)^{\frac{1}{2}}.$$ Since $\oplus_{i\in I_{0}}U_{i}$ is a unitary \op\ on the separable Hilbert space $\oplus_{i\in I_{0}}H_{i}$, (b) applies, and $\oplus_{i\in I_{0}}U_{i}$ has a non-zero fixed point. This implies that $U$ itself has a non-zero fixed point. We have thus proved that $Q$ is a \ka\ set in $\Z$ (this proof is actually a mere rewriting of the well-known observation that if a topological group $G$ is second-countable, $Q$ is a \ka\ subset of $G$ as soon as there exists $\varepsilon>0$ such that property (ii) of Definition \ref{Definition 1.1} holds true for unitary representations of $G$ on \emph{separable} Hilbert spaces).
\par\smallskip

Applying property (b) of Fact \ref{Fact 2.1} to multiplication operators and using the spectral theorem for unitary operators, we first derive a characterization of \ka\ subsets of $\Z$ in terms of Fourier coefficients of probability measures on the unit circle.

\begin{theorem}\label{Theorem 1}
 Let $\qq$ be a subset of $\Z$. The following assertions are equivalent:
\begin{enumerate}
 \item [\emph{(1)}] $\qq$ is a \ka\ subset of $\Z$;
\item[\emph{(2)}] there exists $\varepsilon >0$ such that the following property holds true:
any probability measure $\sigma $ on $\T$ such that $\sup_{\nq}|\wh{\sigma }(n)-1|<\varepsilon $ satisfies $\sigma (\{1\})>0$.
\end{enumerate}
\end{theorem}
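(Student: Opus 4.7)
The proof strategy rests on the spectral theorem for unitary operators, which provides a dictionary between unit vectors carrying a unitary action and probability measures on $\T$; combined with Fact \ref{Fact 2.1}, this will let me translate the \ka\ condition into a condition on Fourier coefficients. Concretely, given a unitary operator $U$ on a Hilbert space $H$ and a unit vector $x\in H$, the scalar spectral measure $\sigma_{x}$ of $x$ with respect to $U$ is the unique probability measure on $\T$ satisfying $\pss{U^{n}x}{x}=\wh{\sigma_{x}}(n)$ for every $n\in\Z$, with the convention $\wh{\sigma}(n)=\int_{\T}z^{n}d\sigma(z)$; conversely, for any probability measure $\sigma$ on $\T$, the multiplication operator $M_{z}:f\mapsto (z\mapsto zf(z))$ is unitary on the separable Hilbert space $L^{2}(\T,\sigma)$, and $\sigma$ is the scalar spectral measure of the constant function $\mathbf{1}$ with respect to $M_{z}$.

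For the implication $(1)\Rightarrow(2)$, I would assume that $Q$ is a \ka\ set in $\Z$ with \ka\ constant $\varepsilon_{0}$ and set $\varepsilon=\varepsilon_{0}^{2}/2$. Given a probability measure $\sigma$ on $\T$ with $\sup_{\nq}|\wh{\sigma}(n)-1|<\varepsilon$, the identity
\[
\norm{M_{z}^{n}\mathbf{1}-\mathbf{1}}^{2}=\int_{\T}|z^{n}-1|^{2}d\sigma(z)=2-2\,\mathrm{Re}\,\wh{\sigma}(n)\le 2|\wh{\sigma}(n)-1|
\]
shows that the unit vector $\mathbf{1}\in L^{2}(\T,\sigma)$ is $(Q,\varepsilon_{0})$-invariant for $M_{z}$. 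Fact \ref{Fact 2.1}(b) then produces a non-zero $f\in L^{2}(\T,\sigma)$ with $zf(z)=f(z)$ for $\sigma$-almost every $z$, so $f$ vanishes off $\{1\}$, and the non-triviality of $f$ in $L^{2}(\T,\sigma)$ forces $\sigma(\{1\})>0$.

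For the converse $(2)\Rightarrow(1)$, I would let $\varepsilon>0$ be the constant provided by (2), take a unitary operator $U$ on a separable Hilbert space $H$ together with a unit vector $x$ satisfying $\sup_{\nq}\norm{U^{n}x-x}<\varepsilon$, and form the scalar spectral measure $\sigma_{x}$. The Cauchy--Schwarz estimate
\[
|\wh{\sigma_{x}}(n)-1|=|\pss{U^{n}x-x}{x}|\le \norm{U^{n}x-x}
\]
yields $\sup_{\nq}|\wh{\sigma_{x}}(n)-1|<\varepsilon$, whence $\sigma_{x}(\{1\})>0$ by (2). Since $\sigma_{x}(\{1\})=\norm{E_{U}(\{1\})x}^{2}$, where $E_{U}$ denotes the projection-valued spectral measure of $U$, the vector $E_{U}(\{1\})x$ is a non-zero fixed vector of $U$, and Fact \ref{Fact 2.1} yields that $Q$ is a \ka\ set in $\Z$.

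The argument is essentially routine once the spectral dictionary is in place; the only point requiring care is the two-sided comparison $|\wh{\sigma}(n)-1|\le \norm{M_{z}^{n}\mathbf{1}-\mathbf{1}}\le \sqrt{2|\wh{\sigma}(n)-1|}$ between Fourier-coefficient estimates and orbit-displacement estimates, which entails a quadratic loss of constants in the $(1)\Rightarrow(2)$ direction (a \ka\ constant $\varepsilon_{0}$ translating into the constant $\varepsilon_{0}^{2}/2$ in (2)) but no loss at all in the $(2)\Rightarrow(1)$ direction.
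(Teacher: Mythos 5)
Your proof is correct and follows the same strategy as the paper: the implication $(1)\Rightarrow(2)$ is identical (multiplication operator on $L^{2}(\T,\sigma)$ applied to the constant function, with the same quadratic loss of constant $\varepsilon=\varepsilon_{0}^{2}/2$), and the converse hinges on the same key fact, namely that the spectral theorem converts a $(Q,\varepsilon)$-almost-invariant unit vector for a unitary $U$ into a probability measure on $\T$ whose Fourier coefficients are close to $1$ on $Q$.

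For $(2)\Rightarrow(1)$ your presentation is a streamlining of the paper's. The paper diagonalizes $U$ as a direct sum $\bigoplus_{i}M_{\sigma_{i}}$ of multiplication operators, writes $Vx=\bigoplus f_{i}$, defines the measure explicitly by $d\sigma=\sum_{i}|f_{i}|^{2}d\sigma_{i}$, derives the estimate from the displacement identity $\|M^{n}f-f\|^{2}=2(1-\mathrm{Re}\,\wh\sigma(n))$, locates an index $i_{0}$ with $\sigma_{i_{0}}(\{1\})>0$, and then assembles a fixed vector by hand. You instead invoke the scalar spectral measure $\sigma_{x}$ of $x$ directly (which is exactly the paper's $\sigma$), use the cleaner Cauchy--Schwarz estimate $|\wh{\sigma_{x}}(n)-1|=|\pss{U^{n}x-x}{x}|\le\|U^{n}x-x\|$, and then read off the fixed vector from the projection-valued spectral measure via $\sigma_{x}(\{1\})=\|E_{U}(\{1\})x\|^{2}$. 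The two arguments are mathematically identical, but yours avoids exhibiting an explicit multiplication-operator model and the final bookkeeping over the index set $I$. Your observation about the constants (quadratic loss in one direction, none in the other) agrees with the paper's Remark \ref{rmk:47}.
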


\begin{proof}
We first prove the implication $(1)\Longrightarrow(2)$. Let $\delta >0$ be a \ka\ constant for the set $\qq$, and let $\sigma $ be a probability measure on $\T$ such that $\sup_{\nq}|\wh{\sigma }(n)-1|<\delta ^{2}/2$.
We denote by $M_{\sigma }$ the multiplication operator by the independent variable $\lambda $ on the space $\ld$ and by $\textbf{1}$ the constant function equal to $1$. Then $||M_{\sigma }^{n}\textbf{1}-\textbf{1}||^{2}=2(1-\Re e\,\wh{\sigma }(n))$ for every $\iz$, so that $\sup_{\nq}||M_{\sigma }^{n}\textbf{1}-\textbf{1}||<\delta $. Since $M_{\sigma }$ is a unitary operator and $\textbf{1}$ a unit vector in $\ld$, it follows from (b) of Fact\,\ref{Fact 2.1} that there exists a non-zero  function $f\in \ld$ such that $M_{\sigma }f=f$, i.e.\ $(\lambda -1)f(\lambda )=0$ in $\ld$. Hence $\sigma (\{1\})>0$, which proves (2).
\par\medskip 
Let us now prove the implication $(2)\Longrightarrow (1)$. Let $U$ be a unitary operator on a separable complex Hilbert space $H$. By the spectral theorem for normal operators, there exists a finite or infinite sequence $(\sigma _{i})_{i\in I}$ of probability measures on $\T$ such that $U$ is similar, via an invertible isometry, to the direct sum operator $M=\bigoplus_{i\in I}M_{\sigma _{i}}$ acting on $K=\bigoplus_{i\in I}L^{2}(\T,\sigma _{i})$. Let $\apl{V}{H}{K}$ be an invertible isometry such that  $U=V^{*}MV$. Let now $\varepsilon >0$ be a constant satisfying assumption (2) of Theorem \ref{Theorem 1}, and suppose that $x\in H$ is a unit vector such that $\sup_{\nq}||U^{n}x-x||<\varepsilon $. Setting $Vx=f=\bigoplus_{i\in I}f_{i}$, with $f_{i}\in L^{2}(\T,\sigma _{i})$ for each $i\in I$, we have 
$||f||^{2}=\sum_{i\in I}||f_{i}||^{2}=1$ and 
\[
\bigl|\bigl|M^{n}f-f\bigr|\bigr|^{2}=\sum_{i\in I}\,\bigl|\bigl|M_{\sigma_{i}}^{n}f_{i}-f_{i}\bigr|\bigr|^{2}=\int_{\T}\bigl|\lambda ^{n}-1\bigr|^{2}\sum_{i\in I}\,\bigl|f_{i}(\lambda )\bigr|^{2}d\sigma _{i}(\lambda )\quad \textrm{for every}\ \iz.
\]
Let $\sigma $ be the positive measure on $\T$ defined by $$d\sigma(\lambda) =\sum_{i\in I}|f_{i}(\lambda)|^{2}d\sigma _{i}(\lambda), \quad\lambda\in \T.$$
Since $||f||^{2}=\sum_{i\in I}||f_{i}||^{2}=1$, $\sigma $ is a probability measure. It satisfies 
$||M^{n}f-f||^{2}=2(1-\Re e\,\wh{\sigma }(n))$ for every $\iz$, so that 
\[
\sup_{\nq}|1-\wh{\sigma }(n)|^{2}=2
\sup_{\nq}\,(1-\Re e\,\wh{\sigma }(n))=\sup_{\nq}||M^{n}f-f||^{2}=\sup_{\nq} ||U^{n}x-x||^{2}<\varepsilon^{2} .
\]
It follows that $\sigma (\{1\})>0$, so that $\sigma _{i_{0}}(\{1\})>0$ for some $i_{0}\in I$. Set 
$g_{i_{0}}=\textbf{1}_{\{1\}}$ (the characteristic function of the set $\{1\}$), $g_{i}=0$ for $i\in I\setminus \{i_{0}\}$, and $g=\bigoplus_{i\in I}g_{i}$. Then $g$ is a non-zero vector of $K$ with $Mg=g$. Since $U=V^{*}MV$, it follows that $y=V^{*}g$ is a non-zero vector of $H$ which satisfies $Uy=y$. Property (b) of Fact \ref{Fact 2.1} is proved, so $\qq$ is a \ka\ set in $\Z$. 
\end{proof}

\begin{remark}\label{rmk:47}
 The above proof yields the following quantitative statements:
 if $(Q,\delta)$ is a \ka\ pair for $\Z$, then assertion (2) of Theorem \ref{Theorem 1} holds true with $\varepsilon=\delta^{2}/2$;
conversely, if assertion (2) of Theorem \ref{Theorem 1} holds true with constant $\varepsilon$, then $(Q,\varepsilon)$ is a \ka\ pair for $\Z$.
\end{remark}

As we will use it repeatedly to exhibit non-\ka\ sets in $\Z$, we state separately the following obvious reformulation of Theorem \ref{Theorem 1}:
\begin{corollary}\label{Cor7}
 Let $\qq$ be a subset of $\Z$. The following assertions are equivalent:
\begin{enumerate}
\item[\emph{(1)}] $\qq$ is not a \ka\ set in $\Z$;
\item[\emph{(2)}] for any $\varepsilon >0$, there exists a probability measure $\sigma $ on $\T$ such that $\sigma (\{1\})=0$ and $\sup_{\nq}|\wh{\sigma }(n)-1|<\varepsilon $.
\end{enumerate}
\end{corollary}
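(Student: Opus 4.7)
The plan is to observe that Corollary \ref{Cor7} is nothing more than the logical contrapositive of Theorem \ref{Theorem 1}, and so there is essentially no content beyond negating quantifiers correctly. I would simply write out the negation step-by-step to confirm that assertion (2) of the corollary is indeed equivalent to the negation of assertion (2) of Theorem \ref{Theorem 1}.

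In more detail, assertion (1) of Corollary \ref{Cor7} is by definition the negation of assertion (1) of Theorem \ref{Theorem 1}. So by Theorem \ref{Theorem 1} it suffices to check that assertion (2) of Corollary \ref{Cor7} is the negation of assertion (2) of Theorem \ref{Theorem 1}. The latter says: there exists $\varepsilon>0$ such that every probability measure $\sigma$ on $\T$ satisfying $\sup_{\nq}|\wh{\sigma}(n)-1|<\varepsilon$ also satisfies $\sigma(\{1\})>0$. Taking the formal negation of this statement yields: for every $\varepsilon>0$, there exists a probability measure $\sigma$ on $\T$ such that $\sup_{\nq}|\wh{\sigma}(n)-1|<\varepsilon$ and $\sigma(\{1\})\le 0$.

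The only remaining point to observe is that for any positive Borel measure $\sigma$ on $\T$ one automatically has $\sigma(\{1\})\ge 0$, so the condition $\sigma(\{1\})\le 0$ is the same as $\sigma(\{1\})=0$. This transforms the negated statement exactly into assertion (2) of Corollary \ref{Cor7}, completing the proof. Since the argument is purely formal and relies on nothing more than Theorem \ref{Theorem 1} already proved, there is no real obstacle; the only thing to take care of is writing the quantifier negation cleanly and noting the positivity of $\sigma$.
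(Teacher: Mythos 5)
Your proposal is correct and matches the paper's own treatment: the paper presents Corollary \ref{Cor7} as ``the following obvious reformulation of Theorem \ref{Theorem 1}'' and gives no further proof, so your careful quantifier negation plus the observation that $\sigma(\{1\})\le 0$ forces $\sigma(\{1\})=0$ for a positive measure is exactly the intended reading.
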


As a consequence of Theorem \ref{Theorem 1}, we now show that whenever $\qq$ is a \ka\ set in $\Z$, property (b) of Fact \ref{Fact 2.1} above can be extended to arbitrary contractions on complex Hilbert spaces. Recall that if $T$ is  a bounded operator on a complex Hilbert space $H$, we denote by $\sigma (T)$ the spectrum of $T$, and by $\sigma _{p}(T)$ the point spectrum of $T$ (i.e.\ the set of eigenvalues of $T$). A contraction on $H$ is a bounded operator $T$ on $H$ with $||T||\le 1$. If $T$ is a contraction on $H$, we set $T_{n}=T^{n}$ for $n\ge 0$ and $T_{n}=T^{*n}$ for $n<0$. For every  $x\in H$, the sequence $(\la{T_{n}x,x})_{n\in \Z}$ is positive definite
(see \cite{FoNa}), so that there exists by Bochner's theorem  a positive measure $\mu _{x}$ on $\T$ which satisfies $\wh{\mu }_{x}(n)=\la{T_{n}x,x}$ for every $n\in\Z$. If $||x||=1$, $\mu _{x}$ is a probability measure on $\T$. Contractions (and, more generally, power-bounded operators) on a Hilbert space satisfy the von Neumann mean ergodic theorem: for every $x\in H$,
\[
\left|\left|\dfrac{1}{N}\sum_{j=0}^{N-1} T^{j}x-P_{\ker(T-\textrm{I})}x\right|\right|\!\xymatrix@C=17pt{\ar[r]&}\!0\quad\textrm{as}\quad N\!\!\xymatrix@C=17pt{\ar[r]&}\!\!+\infty ,
\]
where $P_{\ker(T-\textrm{I})}$ denotes the orthogonal projection of $H$ on the space $\ker(T-\textrm{I})$ of fixed points for $T$.

\begin{theorem}\label{The8}
 Let $\qq$ be a subset of $\Z$. The following assertions are equivalent:
\begin{enumerate}
 \item[\emph{(1)}] $\qq$ is a \ka\ set in $\Z$;
\item[\emph{(2)}] there exists $\varepsilon >0$ such that for any contraction $T$ on a complex Hilbert space $H$, the following holds true:
if there exists a vector $x\in H$ with $||x||=1$ such that $\sup_{\nq}|1-\la {T_{n}x,x}|<\varepsilon $, then $1$ belongs to $\sigma _{p}(T)$. 
\end{enumerate}
\end{theorem}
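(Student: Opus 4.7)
The strategy is to handle the two implications separately; the direction $(2)\Rightarrow(1)$ is essentially immediate from Fact \ref{Fact 2.1}, while $(1)\Rightarrow(2)$ combines Bochner's theorem, the measure-theoretic characterization of Theorem \ref{Theorem 1}, and the mean ergodic theorem for Hilbert space contractions.

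For $(2)\Rightarrow(1)$, I would simply apply (2) to a unitary operator $U$ on a separable Hilbert space $H$ carrying a unit vector $x$ with $\sup_{\nq}||U^{n}x-x||<\varepsilon $. Since $|1-\pss{U^{n}x}{x}|=|\pss{x-U^{n}x}{x}|\leq ||U^{n}x-x||$, the hypothesis of (2) is met, so $1\in\sigma _{p}(U)$, i.e.\ $U$ admits a non-zero fixed vector. By Fact \ref{Fact 2.1}\,(b) this suffices to conclude that $\qq$ is a \ka\ set in $\Z$.

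For the substantive direction $(1)\Rightarrow(2)$, I would take as $\varepsilon $ the constant provided by Theorem \ref{Theorem 1}. Given a contraction $T$ on a complex Hilbert space $H$ and a unit vector $x\in H$ with $\sup_{\nq}|1-\pss{T_{n}x}{x}|<\varepsilon $, the positive definite sequence $(\pss{T_{n}x}{x})_{\iz}$ yields, via Bochner's theorem, a probability measure $\mu _{x}$ on $\T$ with $\wh{\mu }_{x}(n)=\pss{T_{n}x}{x}$ for every $\iz$. The hypothesis then reads $\sup_{\nq}|1-\wh{\mu }_{x}(n)|<\varepsilon $, so Theorem \ref{Theorem 1}\,(2) gives $\mu _{x}(\{1\})>0$.

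The main step is now to translate this positive mass at $1$ into an honest eigenvector of $T$. The plan is to invoke the mean ergodic theorem: $\frac{1}{N}\sum_{j=0}^{N-1}T^{j}x$ converges in norm to $P_{\ker(T-I)}x$, so pairing with $x$ yields
$$\dfrac{1}{N}\sum_{j=0}^{N-1}\wh{\mu }_{x}(j)=\pss{\dfrac{1}{N}\sum_{j=0}^{N-1}T^{j}x}{x}\To\pss{P_{\ker(T-I)}x}{x}=||P_{\ker(T-I)}x||^{2}.$$
Computing the same Ces\`aro average on the spectral side, the Fej\'er-type kernels $K_{N}(\lambda )=\frac{1}{N}\sum_{j=0}^{N-1}\lambda ^{j}$ tend pointwise on $\T$ to $\mathbf{1}_{\{1\}}$ and satisfy $|K_{N}|\le 1$, so bounded convergence gives $\int_{\T}K_{N}\,d\mu _{x}\to \mu _{x}(\{1\})$. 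Hence $||P_{\ker(T-I)}x||^{2}=\mu _{x}(\{1\})>0$, forcing $1\in \sigma _{p}(T)$, as required. The crux of the argument is exactly this mean-ergodic bridge: the measure-theoretic conclusion $\mu _{x}(\{1\})>0$ concerns a priori the spectral measure attached to $x$ rather than the point spectrum of $T$, and it is the Ces\`aro projection onto the single frequency $1$ that converts it into a genuine fixed vector of $T$.
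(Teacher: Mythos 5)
Your proof is correct and follows essentially the same route as the paper: Cauchy--Schwarz to reduce $(2)\Rightarrow(1)$ to Fact \ref{Fact 2.1}, and for $(1)\Rightarrow(2)$ the combination of Bochner's theorem, Theorem \ref{Theorem 1}, and the von Neumann mean ergodic theorem to identify $\mu_x(\{1\})$ with $\pss{P_{\ker(T-I)}x}{x}=\norm{P_{\ker(T-I)}x}^2$. The only cosmetic difference is that you make the bounded-convergence justification of the Ces\`aro limit on the measure side explicit, which the paper leaves implicit.
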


\begin{proof}
 The implication $(2)\Longrightarrow (1)$ follows immediately from Fact \ref{Fact 2.1}, observing that if $U$ is a unitary operator on $H$ and $x\in H$ is a unit vector such that $\sup_{\nq}||U^{n}x-x||<\varepsilon $, then $\sup_{\nq}|\la{U^{n}x,x}-\la{x,x}|<\varepsilon $, i.e.\ $\sup_{\nq}|\pss{U_{n}x}{x}-1|<\varepsilon $. 
\par\smallskip
It remains to prove that $(1)$ implies $ (2)$.
Let $\qq$ be a \ka\ set in $\Z$, and fix $\varepsilon >0$ satisfying property (2) of Theorem \ref{Theorem 1}. Let $T\in\mathcal{B}(H)$ be a contraction on a Hilbert space $H$, let $x$ be a unit vector in $H$ with $\sup_{\nq}|1-\la {T_{n}x,x}|<\varepsilon $, and let $\mu _{x}$ be a probability measure on $\T$ which satisfies 
$\wh{\mu }_{x}(n)=\la{T_{n}x,x}$ for every $n\in\Z$. Our assumption implies that $\sup_{\nq}|1-\wh{\mu }_{x}(n)|<\varepsilon $, from which it follows that $\mu _{x}(\{1\})>0$. Since 
\begin{align*}
\int_{\T}\Bigl(\dfrac{1}{N}\sum_{j=0}^{N-1} \lambda ^{j}\Bigr)d\mu _{x}(\lambda )&\!\xymatrix@C=17pt{\ar[r]&}\!\mu _{x}(\{1\})\quad\textrm{as}\quad N\!\!\xymatrix@C=17pt{\ar[r]&}\!\!+\infty , \intertext{and}
\int_{\T}\Bigl(\dfrac{1}{N}\sum_{j=0}^{N-1} \lambda ^{j}\Bigr)d\mu _{x}(\lambda )&=\dfrac{1}{N}\sum_{j=0}^{N-1}\,\la{T^{j}x,x},
\end{align*}
it follows from the mean ergodic theorem that $\mu _{x}(\{1\})=\pss{P_{\ker(T-\textrm{I})}x}{x}$. Since $\mu _{x}(\{1\})>0$, $\ker (T-\textrm{I})$ is non-zero, which means exactly that $1$ belongs to $\sigma _{p}(T)$.
\end{proof}

\begin{remark}
 The above proof gives the following relationships between the constants:
 if $(Q,\delta)$ is a \ka\ pair for $\Z$, then assertion (2) of Theorem \ref{The8} holds true with $\varepsilon=\delta^{2}/2$;
if assertion (2) of Theorem \ref{The8} holds true with constant $\varepsilon$, then $(Q,\varepsilon)$ is a \ka\ pair for $\Z$.
\end{remark}

 \begin{remark}
 Using unitary dilations, one can give a different proof of the implication $(1)\Longrightarrow (2)$ of Theorem \ref{The8}. Suppose that $(\qq,\delta)$ is a \ka\ pair in $\Z$. Let $T\in\mathcal{B}(H)$ be a contraction on a Hilbert space $H$, and let $h$ be a unit vector in $H$ with $\sup_{\nq}|1-\la {T_{n}h,h}|<\delta^2/2$. Let $U$ be the minimal unitary dilation of $T$ (see \cite{FoNa} for definitions), acting on a larger Hilbert space $K=H\oplus (K\ominus H)$ endowed with an inner product $\langle \cdot,\cdot\rangle_K$. Then $\langle U^n(h,0), (h,0)\rangle_K = \langle T_nh,h\rangle$ for every $n\in\Z$, and therefore we obtain
 \begin{eqnarray*}
 \|U^{n}(h,0) - (h,0)\|^2 & = & 2(1 - \Re e \langle U^n(h,0), (h,0)\rangle_K )\\
     & = & 2(1 - \Re e \langle T_nh,h\rangle)\le  2|1-\la {T_{n}h,h}|<\delta^2
 \end{eqnarray*}
 for all $n\in Q$.
 Hence $\sup_{n\in Q} \|U^{n}(h,0) - (h,0)\| < \delta$, and by Fact \ref{Fact 2.1} the point 1 belongs to the point spectrum of the unitary \op\ $U$. Since $U$ is the minimal unitary dilation of $T$, we also have (see \cite[Chapter 2]{FoNa}) that $1 $ belongs to $\sigma_p(T)$. 
 \end{remark}
 
\par\smallskip
The following characterization of \ka\ sets will be very important in the sequel. Rather surprisingly, it shows that the point $1$ does not play a special role in the description of \ka\ sets (either as an eigenvalue of the operators involved in the statement of Theorem \ref{The8} or as the point mass of the measures $\sigma $ appearing in Theorem \ref{Theorem 1}).

 \begin{theorem}\label{Theorem 2}
 Let $\qq$ be a subset of $\Z$ which contains the point $1$. The following assertions are equi\-valent:
\begin{enumerate}
 \item [\emph{(1)}] $\qq$ is a \ka\  subset of $Z$;
\item[\emph{(2)}] there exists $\varepsilon >0$ such that the following property holds true:
any probability measure $\sigma $ on $\T$ such that $\sup_{\nq}|\wh{\sigma }(n)-1|<\varepsilon $ has a discrete part.
\end{enumerate}
\end{theorem}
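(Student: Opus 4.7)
The implication $(1)\Rightarrow(2)$ is immediate from Theorem~\ref{Theorem 1}: if $Q$ is a \ka\ subset of $\Z$ with \ka\ constant $\varepsilon$, then any probability measure $\sigma$ on $\T$ with $\sup_{\nq}|\wh{\sigma}(n)-1|<\varepsilon$ satisfies $\sigma(\{1\})>0$ and in particular has a nonzero discrete part. The substance of the theorem lies in the converse $(2)\Rightarrow(1)$, and my plan is to verify condition (2) of Theorem~\ref{Theorem 1}, namely to produce $\varepsilon'>0$ such that every probability measure $\sigma$ with $\sup_{\nq}|\wh{\sigma}(n)-1|<\varepsilon'$ has $\sigma(\{1\})>0$.

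The first step is to show that (2) forces $Q$ to be a Jamison set. If $Q$ were not Jamison, Theorem~\ref{The0Ter} would make the set $\Lambda_{\eta}$ uncountable for every $\eta>0$; each such set contains a perfect compact subset $K$ on which a standard Cantor-type construction produces a continuous probability measure $\sigma_K$ satisfying $|\wh{\sigma_K}(n)-1|\leq\int|\lambda^n-1|\,d\sigma_K(\lambda)<\eta$ for every $n\in Q$. Taking $\eta<\varepsilon_2$, where $\varepsilon_2$ is the constant in (2), would exhibit a continuous measure violating (2). Hence $Q$ is Jamison, with some constant $\varepsilon_J>0$.

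With $Q$ Jamison in hand, let $\sigma$ be a probability measure satisfying $\sup_{\nq}|\wh{\sigma}(n)-1|<\delta$ for a small $\delta>0$ to be chosen, and decompose $\sigma=\sigma_d+\sigma_c$ with $s=\sigma_d(\T)$. Since $1\in Q$, the identity
\[
\int_{\T}|1-\lambda^n|^2\,d\sigma(\lambda)=2(1-\Re\wh{\sigma}(n))\leq 2\delta
\]
holds for every $n\in Q$ and yields two bounds. First, for any atom $\lambda_j$ of $\sigma$ with mass $a_j=\sigma(\{\lambda_j\})$ one has $a_j|\lambda_j^n-1|^2\leq 2\delta$, so $\sup_{\nq}|\lambda_j^n-1|\leq\sqrt{2\delta/a_j}$; by Jamison this forces $a_j\leq 2\delta/\varepsilon_J^2$ whenever $\lambda_j\neq 1$. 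Second, if $s<1$ the normalized continuous part $\tau=\sigma_c/(1-s)$ is a continuous probability measure with $|\wh{\tau}(n)-1|\leq\sqrt{2\delta/(1-s)}$ for every $n\in Q$, and applying (2) to $\tau$ forces $1-s\leq 2\delta/\varepsilon_2^2$.

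To finish, I would argue by contradiction, assuming $\sigma(\{1\})=0$: by the preceding bounds every atom then has mass at most $2\delta/\varepsilon_J^2$ while the discrete part accounts for almost all mass, $s\geq 1-2\delta/\varepsilon_2^2$. Following the strategy of \cite[Th.~2.3]{BaGr}, where convolution of measures plays the role of tensor product of representations, I would then apply the same two bounds to the measure $\sigma*\ti{\sigma}$ (with Fourier coefficients $|\wh{\sigma}(n)|^2$, still $2\delta$-close to $1$ on $Q$, and a built-in atom of mass $\sum_j a_j^2$ at $1$) and to the higher convolution powers $\sigma^{*k}$, propagating the Jamison/continuous-part dichotomy until a contradiction emerges. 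The main obstacle is precisely this last step: the Jamison estimate controls only atoms different from $1$ and the continuous-part estimate only bounds total continuous mass, so forcing positive mass at $1$ requires a delicate balancing of these two constraints through the convolution structure, reflecting the Case~1/Case~2 dichotomy (Jamison vs.\ non-Jamison) alluded to in the introduction.
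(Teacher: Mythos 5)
The implication $(1)\Rightarrow(2)$, the Case~1/Case~2 split, and the two estimates you derive in Case~2 (small atoms away from $1$, and small total continuous mass) are all correct and match the ingredients the paper uses. But you stop short of the actual contradiction, and you candidly say so; the route you sketch to finish (convolving $\sigma$ with $\tilde\sigma$ and then $\sigma^{*k}$) is not the one that works, so there is a genuine gap.

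The difficulty is that your two bounds are perfectly consistent with $\sigma(\{1\})=0$: a measure concentrated on an arc near $1$, consisting of many tiny atoms all different from $1$, satisfies $a_j\le 2\delta/\varepsilon_J^2$ and $1-s\le 2\delta/\varepsilon_2^2$ and still has $\sigma(\{1\})=0$. No amount of convolving a \emph{single} such $\sigma$ with itself manufactures a contradiction, since $\sigma*\tilde\sigma$ and $\sigma^{*k}$ are again measures of the same type (possibly with a small atom at $1$), and your estimates simply propagate. What the paper does is structurally different: it runs the contradiction the other way. It assumes $Q$ is \emph{not} Kazhdan, so that for every $p$ there exists a probability measure $\mu_p$ with $\mu_p(\{1\})=0$ and $\sup_{n\in Q}|\wh{\mu_p}(n)-1|<2^{-p}$. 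It then shows (Claim~\ref{claim} and the renormalization after it, using that $Q$ is Jamison) that from these one can extract, for every $\varepsilon>0$, a \emph{finitely supported} discrete probability measure $\sigma_\varepsilon$ carried by $\T\setminus\{1\}$, with all atoms $<\varepsilon/\varepsilon_0^2$, supported in the arc $\{|\lambda-1|<\varepsilon\}$, and with $\sup_{n\in Q}|\wh{\sigma_\varepsilon}(n)-1|<\varepsilon$ (this is Lemma~\ref{Lem12}; note it is precisely here that $1\in Q$ is used, to push the support into a small arc by Markov's inequality). The contradiction is then obtained by forming the infinite convolution $\sigma=\Asterisk_{p\ge 1}\sigma_{\varepsilon_p}$ of a \emph{sequence} of such measures, for $\varepsilon_p$ decreasing fast and chosen inductively so that the product sets $F_1\cdots F_p$ consist of well-separated points; the resulting $\sigma$ is a Cantor-type measure, and Claim~\ref{claim3} shows it is continuous because each atom of $\sigma$ has mass bounded by $\prod_{i\le p}(\varepsilon_i/\varepsilon_0^2)\to 0$. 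Since $\sup_{n\in Q}|\wh{\sigma}(n)-1|\le\sum_p\varepsilon_p<\varepsilon$, this continuous measure violates (2). So the missing idea in your proposal is: do not try to force $\sigma(\{1\})>0$ for a generic $\sigma$ directly; instead, extract a sequence of finitely supported small-atom measures from a hypothetical failure of Kazhdan and convolve them (with \emph{inductively} shrinking $\varepsilon_p$, not fixed $\delta$) to build one continuous counterexample to (2).
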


Theorem \ref{Theorem 2} can obviously be reformulated as follows:

\begin{corollary}\label{Cor11}
 Let $\qq$ be a subset of $\Z$ containing $1$. Then $\qq$ is not a \ka\ set in $\Z$ if and only if there exists for any $\varepsilon >0$ a continuous probability measure $\sigma $ on $\T$ such that 
$$\sup_{\nq}|\wh{\sigma }(n)-1|<\varepsilon .$$
\end{corollary}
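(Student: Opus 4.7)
The implication $(1) \Rightarrow (2)$ follows at once from Theorem \ref{Theorem 1}, with the same constant $\varepsilon$, since $\sigma(\{1\}) > 0$ is in particular a way of having a discrete part. The real work is $(2) \Rightarrow (1)$, which I would prove by contrapositive via Corollary \ref{Cor11}: assuming $Q$ is not a Kazhdan set in $\Z$, I would construct, for every $\varepsilon > 0$, a \emph{continuous} probability measure $\mu$ on $\T$ with $\sup_{n \in Q}|\wh{\mu}(n) - 1| < \varepsilon$, thereby falsifying $(2)$.

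The natural split is according to whether $Q$ is a Jamison set. If $Q$ is not Jamison, Theorem \ref{The0Ter} provides a perfect compact $K \subset \T$ on which $\sup_{n \in Q}|\lambda^n - 1| < \varepsilon$, and any continuous probability measure supported on $K$ does the job by the trivial estimate $|\wh{\mu}(n) - 1| \le \int_K |\lambda^n - 1|\, d\mu$. In this case the non-Kazhdan hypothesis is not even used. The genuine difficulty is when $Q$ is Jamison, with some Jamison constant $\eta > 0$, and this is the case I expect to carry the whole argument.

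In the Jamison case, I would use Corollary \ref{Cor7} to pick, for each $k \ge 1$, a probability measure $\sigma_k$ on $\T$ with $\sigma_k(\{1\}) = 0$ and $\sup_{n \in Q}|\wh{\sigma_k}(n) - 1| < \varepsilon_k$, where the $\varepsilon_k$ are chosen with $\sum_k \varepsilon_k < \varepsilon$. The function $c(n) := \prod_{k \ge 1}\wh{\sigma_k}(n)$ then converges absolutely (since $|\wh{\sigma_k}(n)| \le 1$), it is positive-definite on $\Z$ as a pointwise limit of products of positive-definite functions (Schur product theorem), and it satisfies $c(0) = 1$. By Bochner's theorem there is a probability measure $\mu$ on $\T$ with $\wh{\mu} = c$, and the telescoping bound $|1 - \prod_k a_k| \le \sum_k |1 - a_k|$ (valid for $|a_k|\le 1$) yields $|\wh{\mu}(n) - 1| < \varepsilon$ for $n \in Q$. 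This plays the role of the infinite tensor-product construction mentioned in the introduction.

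The heart of the matter is to prove that $\mu$ is continuous, and this is where the Jamison assumption is used in an essential way. For each atom $\lambda_j$ of $\sigma_k$ (which is necessarily different from $1$ since $\sigma_k(\{1\}) = 0$), the identity
\[
\alpha_{k,j}|\lambda_j^n - 1|^2 \le \int_\T |\lambda^n - 1|^2\, d\sigma_k(\lambda) = 2(1 - \Re \wh{\sigma_k}(n)) < 2\varepsilon_k,
\]
valid for every $n \in Q$, combined with the Jamison lower bound $\sup_{n \in Q}|\lambda_j^n - 1|^2 \ge \eta^2$, forces $\alpha_{k,j} \le 2\varepsilon_k/\eta^2$, and hence $\sum_j \alpha_{k,j}^2 \le 2\varepsilon_k/\eta^2$. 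Wiener's theorem then gives $\lim_N \frac{1}{N}\sum_{n=0}^{N-1}|\wh{\sigma_k}(n)|^2 = \sum_j \alpha_{k,j}^2 \le 2\varepsilon_k/\eta^2$, and since $|\wh{\mu}(n)|^2 \le |\wh{\sigma_k}(n)|^2$ for every $k$, the analogous Ces\`aro average for $\mu$ is bounded by $2\varepsilon_k/\eta^2$ for \emph{every} $k$, hence equals zero. A second application of Wiener's theorem, this time to $\mu$, identifies this limit with $\sum_\lambda \mu(\{\lambda\})^2$, so $\mu$ has no atoms, and the contradiction with $(2)$ is complete. The most delicate point I anticipate is precisely this transfer from the per-atom Jamison bound to a control of the quadratic average $\sum_j \alpha_{k,j}^2$, and its propagation through the infinite product to the continuity of $\mu$.
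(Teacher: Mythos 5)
Your overall blueprint matches the paper's: split on whether $Q$ is a Jamison set, handle the non-Jamison case via Theorem~\ref{The0Ter}, and in the Jamison case build an infinite convolution whose atoms are killed using the Jamison constant. But your execution of the Jamison case is a genuinely different route from the paper's. You propose to control $\sum_\lambda\mu(\{\lambda\})^2$ directly via Wiener's theorem, using the per-atom bound $\alpha_{k,j}\le 2\varepsilon_k/\eta^2$ forced by the Jamison constant. The paper instead proves a structural lemma (Lemma~\ref{Lem12}) that reduces to \emph{finitely supported} measures $\sigma_p$ concentrated in a small arc $\Gamma_{\varepsilon_p}$ around $1$ with atoms of mass $<\varepsilon_p/\varepsilon_0^2$, and then establishes the continuity of the infinite convolution by an explicit Cantor-type computation of $\sigma(\Gamma_{\lambda,p})$. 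If your argument were complete it would be shorter and would expose the mechanism more transparently; the underlying ``small atoms'' bound plays the same role as property~(c) of Lemma~\ref{Lem12}.

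There is, however, a genuine gap right at the construction of $\mu$. You assert that $c(n):=\prod_{k\ge1}\wh{\sigma}_k(n)$ ``converges absolutely (since $|\wh{\sigma}_k(n)|\le1$).'' That inference is false: a product of complex numbers of modulus $\le1$ need not converge (take $\wh{\sigma}_k(n)=-1$ for all $k$, which happens when $\sigma_k=\delta_{-1}$ and $n$ is odd). Your hypothesis $\sup_{n\in Q}|\wh{\sigma}_k(n)-1|<\varepsilon_k$ with $\sum_k\varepsilon_k<\infty$ forces convergence of the product only for $n\in Q$; for $n\notin Q$ nothing controls $\wh{\sigma}_k(n)$. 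Without convergence on all of $\Z$ you cannot invoke Bochner's theorem to produce $\mu$, and then the Wiener argument has nothing to act on. This is precisely where the paper uses the hypothesis $1\in Q$: property~(d) of Lemma~\ref{Lem12} confines $\sigma_p$ to the arc $\Gamma_{\varepsilon_p}$, giving $|\wh{\sigma}_p(n)-1|<|n|\varepsilon_p$ for \emph{every} $n\in\Z$ and hence convergence of the infinite product. Your proposal never uses $1\in Q$, which should have been a warning sign.

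The gap is repairable without Lemma~\ref{Lem12}, in a way that keeps the spirit of your Wiener argument. Form the partial convolutions $\tau_K=\sigma_1*\cdots*\sigma_K$ and let $\mu$ be any $w^*$-accumulation point of the sequence $(\tau_K)_{K\ge1}$ of probability measures on $\T$. For $n\in Q$ the telescoping bound yields $|\wh{\tau}_K(n)-1|\le\sum_{k\le K}\varepsilon_k$, so $|\wh{\mu}(n)-1|\le\sum_k\varepsilon_k<\varepsilon$. For fixed $k$ and every $n$, once $K\ge k$ one has $|\wh{\tau}_K(n)|\le|\wh{\sigma}_k(n)|$, hence $|\wh{\mu}(n)|\le|\wh{\sigma}_k(n)|$, and your Wiener computation then gives $\sum_\lambda\mu(\{\lambda\})^2\le 2\varepsilon_k/\eta^2$ for all $k$, so $\mu$ is continuous. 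With that repair your proof would be a legitimate alternative to the paper's.
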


We postpone the proof of Theorem \ref{Theorem 2} to the forthcoming Section \ref{Section 6} of the paper.

\section{Examples of Kazhdan sets in $\Z$}
As Jamison sets are a crucial tool for the proof of Theorem \ref{Theorem 2}, we begin this section by studying the links between the classes of Jamison and \ka\ sets in $\Z$. 

\subsection{Kazhdan versus Jamison sets in $\Z$}
A first reason why Jamison sets appear naturally in our study of \ka\ sets in $\Z$ is given by the following fact: 

\begin{fact}\label{Fac8}
 If $\qq$ is a \ka\ set in $\Z$, $\qq$ is a Jamison set.
\end{fact}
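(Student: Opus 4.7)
The plan is to derive the Jamison condition from the measure-theoretic characterization of Kazhdan sets given by Theorem \ref{Theorem 1}, using Dirac masses on $\T$ as the test measures. If $Q$ is a Kazhdan set in $\Z$, then by Theorem \ref{Theorem 1} there exists $\varepsilon>0$ such that every probability measure $\sigma$ on $\T$ satisfying $\sup_{n\in Q}|\widehat{\sigma}(n)-1|<\varepsilon$ has $\sigma(\{1\})>0$. The idea is then to feed into this implication, for each $\lambda\in\T\setminus\{1\}$, the Dirac mass $\sigma=\delta_\lambda$, whose Fourier coefficients are $\widehat{\delta_\lambda}(n)=\lambda^n$ and which clearly satisfies $\delta_\lambda(\{1\})=0$. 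The contrapositive then forces $\sup_{n\in Q}|\lambda^n-1|\geq\varepsilon$ for every $\lambda\in\T\setminus\{1\}$.

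This is exactly the arithmetic separation condition that appears as assertion (2) of Theorem \ref{th0bis}. Although Theorem \ref{th0bis} as stated requires $1\in Q$, the relevant direction here is the ``easy'' one (separation implies Jamison), whose proof does not use $1\in Q$ at all: given any invertible bounded operator $T$ on a separable Banach space $X$ with $M:=\sup_{n\in Q}\|T^n\|<+\infty$, and two distinct unimodular eigenvalues $\lambda\neq\mu$ with unit eigenvectors $e_\lambda,e_\mu$, the identity $\lambda^n e_\lambda-\mu^n e_\mu=\lambda^n(e_\lambda-e_\mu)+(\lambda^n-\mu^n)e_\mu$ combined with the bound $\|T^n(e_\lambda-e_\mu)\|\leq M\|e_\lambda-e_\mu\|$ yields $|\lambda^n-\mu^n|\leq (M+1)\|e_\lambda-e_\mu\|$ for all $n\in Q$. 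Since $|\lambda^n-\mu^n|=|(\lambda\mu^{-1})^n-1|$ and $\lambda\mu^{-1}\neq 1$, the separation established above gives $\sup_{n\in Q}|\lambda^n-\mu^n|\geq\varepsilon$, so $\|e_\lambda-e_\mu\|\geq\varepsilon/(M+1)$. Separability of $X$ then forces $\sigma_p(T)\cap\T$ to be countable, which is the definition of $Q$ being Jamison.

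There is essentially no obstacle here: the Kazhdan hypothesis is much stronger than needed, and Dirac masses on $\T$ are exactly the right test objects to convert the measure-theoretic Kazhdan condition into the arithmetic Jamison condition. The only minor point to keep track of is that the Jamison conclusion should be derived without assuming $1\in Q$, which is why I would write out the elementary ``separation implies Jamison'' argument explicitly rather than merely invoking Theorem \ref{th0bis}.
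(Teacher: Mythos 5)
Your proof is correct and takes essentially the same route as the paper's: you apply Theorem~\ref{Theorem 1} to the Dirac mass $\delta_\lambda$ to extract the uniform separation condition $\sup_{n\in Q}|\lambda^n-1|\geq\varepsilon$ for all $\lambda\in\T\setminus\{1\}$, and then run the elementary separation-implies-Jamison argument (the easy direction of Theorem~\ref{th0bis}, which, as you correctly note, does not use the hypothesis $1\in Q$). The paper additionally records a second, more direct operator-theoretic argument --- applying property (b) of Fact~\ref{Fact 2.1} to multiplication by $\lambda$ on $\C$ --- which shows that the Kazhdan constant itself serves as a Jamison constant, slightly better than the constant one obtains through the measure-theoretic detour via Theorem~\ref{Theorem 1}.
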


\begin{proof}
 Let $\varepsilon >0$ be a \ka\ constant for $\qq$. Let also $\lambda \in\T$ be such that $\sup_{\nq}|\lambda ^{n}-1|<\varepsilon $. Applying (2) of Theorem \ref{Theorem 1} to the measure 
$\sigma =\delta _{\{\lambda \}}$, we obtain that $\sigma (\{1 \})>0$, i.e.\ that $\lambda =1$. Hence $\qq$ is a Jamison set in $\Z$.
This argument shows that if $\varepsilon $ is a \ka\ constant for $Q$, then $\varepsilon^{2}/2$ is a Jamison constant for $Q$ (in the sense of the definition given after Theorem \ref{th0bis}). The following direct argument using operators shows that $\varepsilon$ is also a Jamison constant for $Q$. Indeed, if $\lambda\in\T$ is such that $\sup_{n\in Q}|\lambda^{n}-1|<\varepsilon$, the unitary operator $U$ given by multiplication by $\lambda$ on $\C$ satisfies $\sup_{n\in Q}||U^{n}\textbf{1}-\textbf{1}||<\varepsilon$. Hence $1$ is an eigenvalue of $U$, and $\lambda=1$.
\end{proof}

Fact \ref{Fac8} is actually a direct consequence of the observation that Jamison sets in $\Z$ are exactly \ka\ sets in $\Z$ for the class of irreducible unitary representations of $\Z$ (if $\mathcal{C}$ is a certain class of representations of a topological group $G$, a subset $Q$ of $G$ is a \ka\ set for $\mathcal{C}$ if there exists $\varepsilon>0$ such that property (ii) of Definition \ref{Definition 1.1} holds true for every representation $\pi$ of $G$ belonging to $\mathcal{C}$).
Not all Jamison sets give rise to \ka\ sets though. This can be seen 
thanks to the following example:

\begin{example}\label{Exa9}
 Let $(n_{k})_{\kk}$ be a sequence of positive integers such that $n_{0}=1$, and such that $n_{k}$ divides $n_{k+1}$ for every $k\ge 0$. Then $Q=\{n_{k}\,;\,\kk\}$ is not a \ka\ set in $\Z$. But (as shown in Example \ref{cor1}) if $\sup_{\kk}(n_{k+1}/n_{k})$ is finite, then $Q$ is a Jamison set in $\Z$.
\end{example}

\begin{proof}
 We will construct, for every $\varepsilon >0$, a continuous measure $\sigma$ on $\T$ such that $$\sup_{k\ge 0 }
|\wh{\sigma }(n_{k})-1|<\varepsilon .$$ The construction is essentially the same as the one given in \cite[Prop.\,3.9]{EG}. It uses infinite convolution of two-points Dirac measures, following an argument communicated by Jean-Pierre Kahane. Let $(a_{j})_{j\ge 1}$ be a decreasing sequence of positive real numbers with $a_{1}<\varepsilon/(2\pi) $ such that the series $\sum_{j\ge 1}a_{j}$ is divergent. Then the measure
\[
\sigma = \underset {j\geq 1}\Asterisk\bigl( (1-a_{j})\delta _{\{1\}}+a_{j}\delta _{\{e^{2i\pi /n_{j}}\}}\bigr)
\]
is well-defined on $\T$, and is a probability measure. Moreover, the assumption that the series $\sum_{j\ge 1}a_{j}$ diverges implies that the measure $\sigma $ is continuous. For every $k\ge 0$,
\[
\wh{\sigma }(n_{k})=\prod_{j\ge 1}\bigl( 1-a_{j}+a_{j}e^{2i\pi n_{k}/n_{j}}\bigr)
=\prod_{j\ge k+1}\bigl( 1-a_{j}(1-e^{2i\pi n_{k}/n_{j}})\bigr)
 \]
since $n_{j}$ divides $n_{k}$ for every $0\le j\le k$. As $| 1-a_{j}(1-e^{2i\pi n_{k}/n_{j}})|\le 1$, it follows that 
\[
|\wh{\sigma }(n_{k})-1|\le\sum_{j\ge k+1}a_{j}|1-e^{2i\pi n_{k}/n_{j}}|\le 2\pi \,a_{k+1}\,n_{k}\sum_{j\ge k+1}\dfrac{1}{n_{j}}\cdot 
\]
But since $n_{j}$ divides $n_{j+1}$ for all $j\ge 0$, $n_{j}\ge 2^{j-k}n_{k}$ for all $j\ge k+1$. Hence 
\[
|\wh{\sigma }(n_{k})-1|\le 2\pi \,a_{k+1}\,n_{k}\,\dfrac{1}{n_{k}}\sum_{j\ge 1}2^{-j}=2\pi a_{k+1}<\varepsilon \quad \textrm{ for every } k\ge 0
\]
by our assumption on the sequence $(a_{j})_{j\ge 1}$. Thus, by Theorem \ref{Theorem 2}, $\{n_{k}\,;\,\kk\}$ is not a \ka\ set in $\Z$.
\end{proof}

\subsection{Some examples of \ka\ and non-\ka\ sets in $\Z$}\label{Section 4}
We are now going to present a variety of examples of \ka\ and non-\ka\ sets in $\Z$. For this, we choose to use exclusively, and without further mention,  the characterizations of \ka\ and non-\ka\ sets provided by Theorem \ref{Theorem 1}
 and Corollary \ref{Cor7} respectively, although several of these examples could be obtained from the original definition of \ka\ sets as given in Definition \ref{Definition 1.1}.
Let us start our list of examples with the following obvious observation:

\begin{example}\label{Exa1}
 The set $\N$ is a \ka\ set in $\Z$, as well as any subset $\qq=\{n_{k}\,;\,\kk\}$ of $\Z$ such that 
$(n_{k}\theta )_{\kk}$ is uniformly distributed modulo $1$ for any $\theta\in\R\setminus\Z$.
\end{example}

\begin{proof}
Let $(n_k)_{k\ge 0}$ be such that $(n_{k}\theta )_{\kk}$ is uniformly distributed modulo $1$ for any $\theta\in\R\setminus\Z$, i.e. that
$$\frac{1}{N}\sum_{k=1}^{N}\lambda^{n_{k}}\!\xymatrix@C=17pt{\ar[r]&}\!0 \quad \textrm{as }N \!\xymatrix@C=17pt{\ar[r]&}\!+\infty$$
for every $\lambda\in\T\setminus\{1\}$.
 Let $\sigma $ be a probability measure on $\T$ such that $\sup_{k\ge 1}|\wh{\sigma }(n_{k})-1|<1$. Since
\[
\dfrac{1}{N}\sum_{k=1}^{N}\wh{\sigma }(n_{k})=\int_{\T}\Bigl( \dfrac{1}{N}\sum_{k=1}^{N}\lambda ^{n_{k}}\Bigr)d\sigma (\lambda )\!\xymatrix@C=17pt{\ar[r]&}\!\sigma (\{1\}) \;\; \textrm{ as } N\!\xymatrix@C=17pt{\ar[r]&}\!+\infty,
\]
we have $\sigma(\{1\})>0$.
\end{proof}

On the other hand, the sets $a\N$, where $a\ge 2$, are obviously never \ka\ sets in $\Z$ (they do not generate $\Z$).

\begin{example}\label{Exa2}
Let $a\ge 2$ be an integer. The set $a\Z$, as well as all infinite subsets of $a\Z$, are never \ka\ sets in $\Z$.
\end{example}

We next provide an example showing that $\{n_{k}\,;\,\kk\}$ may be a \ka\ set in $\Z$ without $(n_{k}\theta )_{\kk}$ being uniformly distributed modulo $1$ for every $\theta \in\R\setminus\Q$.
\begin{example}\label{Exa3}
 The set $\{2^{k}+k\,;\,k\ge 0\}$ is a \ka\ set in $\Z$, although there exists an irrational number $\theta$ such that the sequence $((2^{k}+k)\theta )_{\kk}$ is not dense in $[0,1]$ modulo $1$, hence not uniformly distributed modulo $1$.
\end{example}

 \begin{proof}
  The sequence $(n_{k})_{\kk}$ defined by $n_{k}=2^{k}+k$, $\kk$, satisfies the relation $2n_{k}=n_{k+1}+k-1$. Let $\sigma $ be a probability measure on $\T$ such that $\sup_{k\ge 0}|\wh{\sigma }(n_{k})-1|<1/18$. Since, by the Cauchy-Schwarz inequality,
  \[
  |\wh{\sigma }(k)-1|\le\int_{\T}|\lambda^{k}-1|d\sigma(\lambda)\le\sqrt{2}\,|\wh{\sigma }(k)-1|^{1/2}\quad\textrm{ for every } k\in\Z,
  \]
  we have
 \begin{align*}
|\wh{\sigma }(k-1)-1|&\le 2 \int_{\T}|\lambda^{n_{k}}-1|d\sigma(\lambda)
+\int_{\T}|\lambda^{n_{k+1}}-1|d\sigma(\lambda)\\
&\le 2\sqrt{2}\,
|\wh{\sigma }(n_{k})-1|^{1/2}+\sqrt{2}\,|\wh{\sigma }(n_{k+1})-1|^{1/2}
\end{align*}
for all $k\ge 1$,
so
that $\sup_{k\ge 0}|\wh{\sigma }(k)-1|<1$. Hence $\sigma( \{1\})>0$ by the proof of Example \ref{Exa1}. So $\{n_{k}\,;\,\kk\}$ is a \ka\ set in $\Z$. But $(n_{k})_{k\ge 0}$ being lacunary, it follows from a result due independently to Pollington \cite{Pol} and De Mathan \cite{DM} that there exists a subset $A$ of $[0,1]$ of Hausdorff measure $1$ such that for every $\theta $ in $ A$, the set
 $\{n_{k}\theta \textrm{ ; } k\ge 0\}$ is not dense modulo $1$. One of these numbers $\theta $ is irrational, and $((2^{k}+k)\theta )_{k\ge 0}$ is of course not uniformly distributed modulo $1$ for this particular choice of $\theta $. 
 \end{proof}

The same kind of argument allows us to show the following result (which also follows from Theorem \ref{The12}):

\begin{example}\label{Exa4}
 The set $\mathcal{P}$ of prime numbers is a \ka\ set in $\Z$.
\end{example}

\begin{proof}
A result of Vinogradov \cite{V} states that any sufficiently large odd integer
can be written as a sum of at most three prime numbers; therefore every sufficiently
large integer can be written as a sum of at most four prime numbers.
It follows from the same reasoning as in Example \ref{Exa3} above that if $\sigma $ is a probability measure on $\T$ such that 
$\sup_{p\in\mathcal{P}}|\wh{\sigma }(p)-1|<1/32$, there exists an integer $k_{0}$ such that $\sup_{k\ge k_{0}}|\wh{\sigma }(k)-1|<1$. The same argument as in Example \ref{Exa1} then implies that $\sigma (\{1\})>0$ and thus $\mathcal{P}$ is a Kazhdan set in $\Z$.
 
The above proof combined with Remark \ref{rmk:47} implies that $(\mathcal{P},1/32)$ is a Kazhdan pair in $\Z$. A better Kazhdan constant for $\mathcal{P}$, namely $\sqrt{2}/4$, can be obtained using Vinogradov's result and the following argument. Assume that $U$ is a unitary operator
acting on a Hilbert space $H$ and that $x\in H$ is a norm one vector such that $\sup_{p\in\mathcal{P}}\|U^px - x\| < \sqrt{2}/4$. If $p_j$, $1\le j \le 4$, are four prime numbers, then 
$$\|U^{p_1+p_2+p_3+p_4}x - x\| \le  \sum_{j=1}^4 \|U^{p_j}x - x\| < \sqrt{2} .$$
As every sufficiently
large integer can be written as a sum of at most four prime numbers, we obtain the existence of an integer $k_{0}$ such that $\sup_{k\ge k_{0}}\|U^kx-x\|< \sqrt{2}$. The classical argument showing that $(G,\sqrt{2})$ is a Kazhdan pair for any topological group $G$ (see \cite[Prop. 1.1.5]{BdHV}) applies in our situation. We obtain that $1$ is an eigenvalue of $U$, and it thus follows that $(\mathcal{P},\sqrt{2}/4)$ is a Kazhdan pair in $\Z$.   
\end{proof}

More generally:

\begin{proposition}\label{Pro5}
 Let $(n_{k})_{k\ge 0}$ be a sequence of elements of $\Z$. Suppose that there exist an integer $a\ge 2$ and integers
$b_{0},\dots,b_{p}\in\Z$, not all zero, such that 
\begin{enumerate}
 \item [\emph{(i)}] the set $\{b_{0}n_{k}+b_{1}n_{k+1}+\cdots+b_{p}n_{k+p}\,;\,k\ge 0\}$ contains all sufficiently large multiples of $a$;
\item[\emph{(ii)}] there exists $k\ge 0$ such that $n_{k}$ and $a$ have no non-trivial common divisor.
\end{enumerate}
Then $\{n_{k}\,;\,\kk\}$ is a \ka\ set in $\Z$.
\end{proposition}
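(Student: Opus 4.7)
The plan is to mimic the Cauchy--Schwarz/telescoping arguments used in Examples \ref{Exa3} and \ref{Exa4}, using hypothesis (i) to control $\widehat{\sigma}$ on (large) multiples of $a$, and then using (ii) to pass from the coset $a\Z$ to all residue classes modulo $a$.

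\medskip

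\noindent\textbf{Step 1: Setup.} Let $\varepsilon>0$ be a small constant to be fixed later, and let $\sigma$ be a probability measure on $\T$ with $\sup_{k\ge 0}|\widehat{\sigma}(n_{k})-1|<\varepsilon$. The goal, via Theorem \ref{Theorem 1}, is to show $\sigma(\{1\})>0$ whenever $\varepsilon$ is small enough (depending only on $a$, $p$, the $b_{j}$, and the index $k_{0}$ from (ii)). Recall the elementary bound
$$|\widehat{\sigma}(m)-1|\le\int_{\T}|\lambda^{m}-1|\,d\sigma(\lambda)\le\sqrt{2}\,|\widehat{\sigma}(m)-1|^{1/2}\quad\text{for every }m\in\Z,$$
together with $|\lambda^{bm}-1|\le|b|\,|\lambda^{m}-1|$ for all $b\in\Z$, $\lambda\in\T$.

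\medskip

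\noindent\textbf{Step 2: Control on multiples of $a$.} Using $|\lambda^{b_{0}n_{k}+\cdots+b_{p}n_{k+p}}-1|\le\sum_{j=0}^{p}|b_{j}|\,|\lambda^{n_{k+j}}-1|$ (telescoping the product $\prod_{j}\lambda^{b_{j}n_{k+j}}$), integrating against $\sigma$, and applying the Cauchy--Schwarz bound, one gets
$$|\widehat{\sigma}(b_{0}n_{k}+\cdots+b_{p}n_{k+p})-1|\le\sqrt{2}\,B\,\sqrt{\varepsilon},\qquad B=\sum_{j=0}^{p}|b_{j}|.$$
By hypothesis (i), the integers on the left cover all multiples $am$ of $a$ with $m\ge m_{1}$ for some $m_{1}\ge 1$; so $|\widehat{\sigma}(am)-1|<\sqrt{2}\,B\,\sqrt{\varepsilon}$ for all such $m$ (and, by conjugation, for all $m\le-m_{1}$ as well).

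\medskip

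\noindent\textbf{Step 3: Passage from $a\Z$ to all of $\Z$ via (ii).} Fix $k_{0}$ with $\gcd(n_{k_{0}},a)=1$. For $0\le j\le a-1$ and $m\in\Z$ with $|m|\ge m_{1}$, the same telescoping trick gives $|\lambda^{jn_{k_{0}}+am}-1|\le j|\lambda^{n_{k_{0}}}-1|+|\lambda^{am}-1|$, hence
$$|\widehat{\sigma}(jn_{k_{0}}+am)-1|\le j\sqrt{2}\,\sqrt{\varepsilon}+\sqrt{2}\bigl(\sqrt{2}\,B\,\sqrt{\varepsilon}\bigr)^{1/2}=O(\varepsilon^{1/4})$$
as $\varepsilon\to 0^{+}$, with a constant depending only on $a,B,k_{0}$. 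Since $\gcd(n_{k_{0}},a)=1$, every residue class modulo $a$ is represented by some $jn_{k_{0}}$ with $0\le j\le a-1$, so every integer $N$ with $|N|$ sufficiently large can be written as $jn_{k_{0}}+am$ with $0\le j\le a-1$ and $|m|\ge m_{1}$. Consequently, there exists $N_{0}\ge 1$ and a constant $C=C(a,B,k_{0})>0$ such that
$$|\widehat{\sigma}(N)-1|\le C\,\varepsilon^{1/4}\quad\text{for every }N\in\Z\text{ with }|N|\ge N_{0}.$$

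\medskip

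\noindent\textbf{Step 4: Conclusion.} Choose $\varepsilon>0$ small enough that $C\,\varepsilon^{1/4}<1$. Then $\sup_{N\ge N_{0}}|\widehat{\sigma}(N)-1|<1$. As in the proof of Example \ref{Exa1},
$$\frac{1}{M}\sum_{N=N_{0}}^{N_{0}+M-1}\widehat{\sigma}(N)=\int_{\T}\Bigl(\frac{1}{M}\sum_{N=N_{0}}^{N_{0}+M-1}\lambda^{N}\Bigr)d\sigma(\lambda)\longrightarrow\sigma(\{1\})\quad\text{as }M\to+\infty,$$
while the Cesàro averages on the left stay within distance $C\varepsilon^{1/4}<1$ of $1$. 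Letting $M\to+\infty$ yields $|\sigma(\{1\})-1|\le C\varepsilon^{1/4}<1$, hence $\sigma(\{1\})>0$. By Theorem \ref{Theorem 1}, $\{n_{k}\,;\,k\ge 0\}$ is a Kazhdan set in $\Z$.

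\medskip

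\noindent\textbf{Main difficulty.} There is no serious obstacle: the proof is a tracking of constants through Cauchy--Schwarz. The only conceptually subtle point is the role of (ii): without it, the whole argument only controls $\widehat{\sigma}$ on $a\Z$ and would show the weaker conclusion $\sigma(\mathcal{U}_{a})>0$ (where $\mathcal{U}_{a}$ is the group of $a$-th roots of unity), which is not enough to conclude $\sigma(\{1\})>0$; hypothesis (ii) is precisely what lets us translate estimates across all residues modulo $a$.
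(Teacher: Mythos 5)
Your proof is correct, and it takes a genuinely different route from the one in the paper. You use (ii) to \emph{extend} the Fourier-coefficient control from large multiples of $a$ to all sufficiently large integers: once $\gcd(n_{k_0},a)=1$, every large $N$ can be written as $jn_{k_0}+am$ with $0\le j\le a-1$ and $|m|$ large, so the telescoping/Cauchy--Schwarz bounds give $\sup_{|N|\ge N_0}|\widehat{\sigma}(N)-1|=O(\varepsilon^{1/4})$, and then one concludes directly by the Ces\`aro argument of Example~\ref{Exa1}. The paper instead runs a proof by contradiction: assuming $\sigma(\{1\})=0$, hypothesis (i) forces $\sigma$ to put almost all of its mass on the set $C$ of nontrivial $a$-th roots of unity, and then (ii) is used to show that $\widehat{\sigma}(n_0)\approx 1$ is incompatible with a measure concentrated on $C$ (since $\lambda^{n_0}=\lambda^r\neq 1$ for $\lambda\in C$ when $\gcd(n_0,a)=1$). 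Your version is conceptually cleaner, as it reduces Proposition~\ref{Pro5} to the base case of Example~\ref{Exa1} (control of $\widehat\sigma$ on a cofinite set of integers), at the cost of a weaker explicit Kazhdan constant (of order $\varepsilon^{1/4}$ rather than $\varepsilon$); the paper's support-analysis argument keeps the constants tighter and closer to the structure used later in the proof of Theorem~\ref{Theorem 2}. Both uses of (ii) are essential and both proofs are valid.
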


\begin{proof}
 We can suppose without loss of generality that $n_{0}$ and $a$ have no non-trivial common divisor. Let $1\le r\le a-1$ be such that $n_{0}\equiv r\,\textrm{mod} \, a$, and let $C$ denote the set of all the $a$-th roots of $1$ which are different from $1$. Then $\lambda ^{r}\neq 1$ for every $\lambda \in C$ (if $\lambda^{a}=\lambda^{r}=1$ and $\lambda\not =1$, $a$ and $r$ have a non-trivial common divisor), and hence there exists $0<\delta <1/2$ such that $1-\Re e(\lambda ^{r})>12\delta (a-1)$ for every $\lambda \in C$. Let $\sigma $ be a probability measure on $\T$ such that 
 $$
 \sup_{k\ge 0}|\wh{\sigma }(n_{k})-1|<\delta^{2} \,\dfrac{1}{2(|b_{0}|+|b_{1}|+\cdots+|b_{p}|)^{2}}.
 $$
Using again the chain of inequalities 
\[
  |\wh{\sigma }(j)-1|\le\int_{\T}|\lambda^{j}-1|d\sigma(\lambda)\le\sqrt{2}\,|\wh{\sigma }(j)-1|^{1/2},\quad j\in\Z,
  \]
we obtain that
$$
\sup_{k\ge 0}|\wh{\sigma }(b_{0}n_{k}+\cdots+b_{p}n_{k+p})-1|<\delta.
$$
Suppose now that $\sigma( \{1\})=0$. Assumption (i) and an argument similar to the one employed in the proof of Example \ref{Exa1} imply that $|\sigma (C)-1|\le\delta $, so that $\sigma (C)\ge 1-\delta $. Let $\tau $ be the probability measure on $\T$ supported on $C$ defined by $\tau =\sigma (C)^{-1}\textbf{1}_{C}\sigma $. We write $\tau$ as $\tau =\sum_{\lambda \in C}a_{\lambda }\delta _{\{\lambda \}}$, where $a_{\lambda }\ge 0$ for each $\lambda \in C$ and $\sum_{\lambda \in C}a_{\lambda }=1$. We have 
\[
 \wh{\tau }(n_{0})=\sum_{\lambda \in C}a_{\lambda }\lambda ^{n_{0}}=\sum_{\lambda \in C}a_{\lambda }\lambda ^{r}
\]
and
 \[
  \wh{\tau }(n_{0})=\wh{\sigma }(n_{0})+\left(\frac{1}{\sigma(C)}-1\right)\wh{\sigma }(n_{0})-\frac{1}{\sigma(C)}\int_{\T\setminus C}\lambda^{n_{0}}d\sigma(\lambda).
 \]
Hence 
 \[
|\wh{\tau }(n_{0})-1|\le|\wh{\sigma }(n_{0})-1|+2\,\frac{1-\sigma(C)}{\sigma(C)}
\le|\wh{\sigma }(n_{0})-1|+\frac{2\delta}{1-\delta }\le \frac{3\delta}{1-\delta }<6\delta
\]
since $\delta<1/2$.
This implies that
$a_{\lambda }(1-\Re e(\lambda ^{r}))<6\delta $ for every $\lambda \in C$. Since $\sum_{\lambda \in C}a_{\lambda }=1$ and $C$ has cardinality $a-1$, there exists $\lambda _{0}\in C$ such that $a_{\lambda _{0}}>(1-\delta )/(a-1)$. We thus obtain that $1-\Re e (\lambda _{0}^{r})<6\delta (a-1)/(1-\delta) <12\delta (a-1)$, which contradicts our assumption on $\delta $. Hence $\sigma (\{1\})>0$, and $\{n_{k}\,;\,\kk\}$ is a \ka\ set in $\Z$.
\end{proof}

As a consequence of Proposition \ref{Pro5}, we obtain for instance:

\begin{example}\label{Exa6}
 The set of squares $\{k^{2}\,;\,k\ge 1\}$ is a \ka\ set in $\Z$.
\end{example}

\begin{proof}
 It suffices to observe that $(k+2)^{2}-k^{2}=4(k+1)$, while there are some square numbers which are not divisible by $4$.
\end{proof}

This last result will be generalized in Section \ref{Section 7} as a consequence of Theorem \ref{The12}, see Example \ref{Exa15} below. 

Examples \ref{Exa4} and \ref{Exa6} above can also be derived from the following observation. A subset $Q$ of $\N$ is said to have positive \emph{Shnirel'man density} if $\inf_{N\ge 1} \frac{1}{N}\#[1,N]\cap Q$ is positive. Any set $Q$ with positive Shnirel'man density has the property that there exists an integer $d\ge 1$ such that every element of $\N$ can be written as a sum of at most $d$ elements of $Q$ (not necessarily distinct). See \cite[Ch. 7]{Nath} for more on this topic. We deduce from this observation that

\begin{example}
 Any subset $Q$ of $\N$ with positive Shnirel'man density is a \ka\ set in $\Z$.
\end{example}

The set $\{0,1\}\cup(\mathcal{P}+
\mathcal{P})=\{0,1\}\cup\{p+q\textrm{ ; }p,q\in\mathcal{P}\}$  has positive Shnirel'man density (see \cite[Th. 7.8]{Nath}), and this implies again (see also \cite[Th. 7.9]{Nath}) that $\mathcal{P}$ is a \ka\ set in $\Z$. 
\par\smallskip
We finish this section with a last example along these lines. It immediately follows from \cite[Th. 7.10]{Nath}, which states that if $Q$ is a set of primes containing a positive proportion of the primes, every sufficiently large element of $\N$ can be written as the sum of a bounded numbers of elements of $Q$.

\begin{example}
 Let $Q$ be a set of primes that contains a positive proportion of the
primes, that is, there exists a constant $\theta\in (0,1)$ such that $ \#([1,N]\cap Q) > \theta
\,\#([1,N]\cap \mathcal{P})$ for
all  $N$ sufficiently large. Then $Q$ is a Kazhdan set in $\Z$.
\end{example}

\section{Proof of Theorem \ref{Theorem 2}}\label{Section 6}
 The implication $(1)\Longrightarrow (2)$ being obvious, we prove that $(2)$ implies $(1)$. Reasoning by contradiction, we suppose that $(2)$ holds true, but that $\qq$ is not a \ka\ set in $\Z$. We have to consider separately two cases, depending on whether $\qq$ is a Jamison set in $\Z$ or not. 
\par\smallskip 
\textbf{Case 1.} If $\qq$ is not a Jamison set in $\Z$, Theorem \ref{The0Ter} implies that there exists for every $\varepsilon >0$ a perfect compact subset $K_{\varepsilon }$ of $\T$ such that $\sup_{\nq}|\lambda ^{n}-1|<\varepsilon $ for every $\lambda \in K_{\varepsilon}$. Any continuous probability measure $\sigma $ on $\T$ which is supported on $K_{\varepsilon }$ satisfies $\sup_{\nq}|\wh{\sigma }(n)-1|\le\varepsilon $, and this violates our supposition that $(2)$ of Theorem \ref{Theorem 2} holds true.
\par\medskip 
\textbf{Case 2.} Suppose now that $\qq$ is a Jamison set in $\Z$, and let $\varepsilon_{0} $ be a Jamison constant for $\qq$ (Jamison constants are defined after the proof of Theorem \ref{th0bis}). The first step of the proof in this case is to show the following lemma:

\begin{lemma}\label{Lem12}
 Under the assumptions above, there exists for every $\varepsilon >0$ a discrete probability measure $\sigma $ on $\T$ which has the following properties:
\begin{enumerate}
 \item [\emph{(a)}] $\sup_{\nq}|\wh{\sigma }(n)-1|<\varepsilon $;
\item[\emph{(b)}] $\sigma $ is supported on a finite subset $F$ of $\T\setminus \{1\}$: $\sigma =\sum_{\lambda \in F}a_{\lambda }\delta _{\lambda }$, with $a_{\lambda }>0$
for every $\lambda \in F$ and $\sum_{\lambda \in F}a_{\lambda }=1$;
\item[\emph{(c)}] $0<a_{\lambda }<\varepsilon /\varepsilon _{0}^{2}$ for every $\lambda \in F$;
\item[\emph{(d)}] $F$ is contained in the arc $\Gamma _{\varepsilon }=\{\lambda \in \T\,;\, |\lambda -1|<\varepsilon \}$.
\end{enumerate}
\end{lemma}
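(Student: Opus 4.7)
My plan is to start from a probability measure provided by Corollary~\ref{Cor7}, then successively refine it using the Jamison property, the concentration implied by $1\in Q$, a restriction step, and finally a discretization step.

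First, since $Q$ is not a Kazhdan set, Corollary~\ref{Cor7} yields, for every $\eta>0$ (to be fixed small in terms of $\varepsilon$ and $\varepsilon_{0}$), a probability measure $\mu$ on $\T$ with $\mu(\{1\})=0$ and $\sup_{n\in Q}|\wh{\mu}(n)-1|<\eta$. I would then use the Jamison hypothesis to bound the atoms of $\mu$: for each $\lambda\in\T\setminus\{1\}$, pick $n\in Q$ with $|\lambda^{n}-1|\geq\varepsilon_{0}$; then $\mu(\{\lambda\})\varepsilon_{0}^{2}\leq\int_{\T}|\lambda'^{n}-1|^{2}\,d\mu(\lambda')=2(1-\Re\wh{\mu}(n))\leq 2\eta$, so $\mu(\{\lambda\})\leq 2\eta/\varepsilon_{0}^{2}$. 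Using crucially that $1\in Q$, the identity $\int_{\T}|\lambda-1|^{2}d\mu=2(1-\Re\wh{\mu}(1))\leq 2\eta$ together with Markov's inequality gives $\mu(\T\setminus\Gamma_{\varepsilon/2})\leq 8\eta/\varepsilon^{2}$, so $\mu$ is concentrated near $1$.

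Next, I would form the normalized restriction $\nu=\mu|_{\Gamma_{\varepsilon/2}\setminus\{1\}}/\mu(\Gamma_{\varepsilon/2})$. It is a probability measure supported in $\Gamma_{\varepsilon/2}\setminus\{1\}\subset\Gamma_{\varepsilon}\setminus\{1\}$; its atoms are at most $(2\eta/\varepsilon_{0}^{2})/(1-8\eta/\varepsilon^{2})\leq 4\eta/\varepsilon_{0}^{2}$ for $\eta$ small; and a short computation expressing $\wh{\nu}(n)-1$ in terms of $\wh{\mu}(n)-1$ plus an error supported on $\Gamma_{\varepsilon/2}^{c}$ yields $\sup_{n\in Q}|\wh{\nu}(n)-1|\leq C\eta/\varepsilon^{2}$ for a universal constant $C$. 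I would choose $\eta$ of order $\varepsilon^{3}$ so that simultaneously this last supremum is less than $\varepsilon/2$ and $4\eta/\varepsilon_{0}^{2}<\varepsilon/\varepsilon_{0}^{2}$. To produce the desired discrete measure $\sigma$, I would partition $\Gamma_{\varepsilon/2}\setminus\{1\}$ into finitely many pairwise disjoint arcs $I_{1},\ldots,I_{M}$ with $\nu(I_{j})<\varepsilon/\varepsilon_{0}^{2}$ for each $j$ (possible because every atom of $\nu$ already satisfies this bound and the continuous part can be cut arbitrarily fine), pick $\lambda_{j}\in I_{j}$ and set $\sigma=\sum_{j}\nu(I_{j})\delta_{\lambda_{j}}$; conditions (b), (c), (d) are then built in by construction.

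The main obstacle is verifying (a). Writing $|\wh{\sigma}(n)-1|\leq|\wh{\sigma}(n)-\wh{\nu}(n)|+|\wh{\nu}(n)-1|<|\wh{\sigma}(n)-\wh{\nu}(n)|+\varepsilon/2$, it remains to control the discretization error uniformly over $n\in Q$. The naive bound $|\wh{\sigma}(n)-\wh{\nu}(n)|\leq|n|\cdot\max_{j}\operatorname{diam}(I_{j})$ gives no useful information at the high frequencies $n\in Q$ with $|n|\operatorname{diam}(I_{j})\gtrsim 1$, and refining the partition merely pushes the threshold upward rather than removing it since $Q$ may be unbounded. The resolution must exploit the fact that $\wh{\nu}$ is already very close to $1$ uniformly on all of $Q$ (not only on its bounded part), combined with a coordinated choice of the representatives $\lambda_{j}$ within each arc—for instance via a symmetric (moment-matching) placement inside each $I_{j}$ that cancels the leading-order error, or via a weak-$*$ compactness/diagonal argument on the convex set of candidate discrete approximations of $\nu$. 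Pushing this last step through for every $n\in Q$ simultaneously is the technical heart of the lemma.
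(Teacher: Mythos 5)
Your plan agrees with the paper at the boundary — both start from Corollary~\ref{Cor7}, both use the Jamison constant $\varepsilon_0$ to bound atoms, and both use $1\in Q$ plus Markov to concentrate the measure in $\Gamma_\varepsilon$ — but the middle of your argument heads into genuine trouble, and you have correctly sensed where: the discretization step does not go through.

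The core difficulty is this. Your measure $\nu$ (the normalized restriction of $\mu$ to $\Gamma_{\varepsilon/2}$) may well have a substantial continuous part, and a finitely supported discrete measure $\sigma$ cannot approximate a continuous measure so well that $\sup_{n\in Q}|\wh{\sigma}(n)-\wh{\nu}(n)|$ stays small, when $Q$ is infinite (and here $Q$ is infinite: it is not a Kazhdan set yet generates $\Z$, hence is unbounded). Moment-matching inside each arc only cancels the low-frequency terms in $n\mapsto\wh{\sigma}(n)-\wh{\nu}(n)$, and a weak-$*$ compactness argument would produce a limit that is again $\nu$ itself, not a discrete measure. There is no way, in general, to finitely discretize a continuous measure on $\T$ while controlling all Fourier coefficients along an unbounded set of frequencies.

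The paper sidesteps this entirely: instead of \emph{discretizing} a possibly continuous measure, it \emph{extracts} the discrete part and shows it already does the job. Starting from $\mu_p$ with $\sup_{n\in Q}|\wh{\mu}_p(n)-1|<2^{-p}$, write $\mu_p=\mu_{p,d}+\mu_{p,c}$. The crucial step (the Claim in the paper) shows, using hypothesis (2) of Theorem~\ref{Theorem 2}, that $\underline{\lim}_p\,\mu_{p,d}(\T)=\gamma>0$: otherwise the normalized continuous parts $\mu_{p,c}/\mu_{p,c}(\T)$ would give, for every $\delta>0$, a \emph{continuous} probability measure with $\sup_{n\in Q}|\wh{\nu}(n)-1|<\delta$, contradicting (2). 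Since the discrete parts have mass bounded below, the normalized discrete measures $\tau_p=\mu_{p,d}/\mu_{p,d}(\T)$ inherit the small Fourier deviation: $\sup_{n\in Q}|1-\wh{\tau}_p(n)|^2\le 2^{-p+1}/\mu_{p,d}(\T)$. Truncating such a $\tau_p$ to a finite set of mass close to $1$ is harmless because $\tau_p$ is already discrete, so the finite-support requirement comes for free. From there the paper proceeds as you do (Jamison bound on the $b_\lambda$, Markov concentration near $1$, normalization of the restriction to a small arc). So the missing idea in your argument is the Claim: you must use hypothesis (2) \emph{inside} the proof of the Lemma to guarantee the discrete part of $\mu_p$ does not disappear, and then work with that discrete part directly rather than trying to discretize after the fact.
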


\begin{proof}[Proof of Lemma \ref{Lem12}] Since $\qq$ is not a \ka\ set in $\Z$, there exists for each $p\ge 1$ a probability measure $\mu _{p}$ on $\T$ with $\mu _{p}(\{1\})=0$ such that $\sup_{\nq}|\wh{\mu }_{p }(n)-1|<2^{-p}$. We denote by $\mu _{p,d}$ and $\mu _{p,c}$ respectively the discrete  and continuous parts of the measure $\mu _{p}$. First, we have:

\begin{claim}\label{claim}
 We have $\underline{\lim}_{\,p\to+\infty\, }\mu _{p,d}(\T)=\gamma >0$.
\end{claim}

\begin{proof}[Proof of Claim \ref{claim}]
Suppose that  $\underline{\lim}_{\,p\to+\infty\, }\mu _{p,d}(\T)=0$. By passing to a subsequence, we can assume that $\mu _{p,c}(\T)$ tends to $1$ as $p$ tends to infinity, and we denote for each $p\ge 1$ by $\nu _{p}$ the measure $\nu _{p}=\mu _{p,c}/\mu _{p,c}(\T)$; it is a continuous  probability measure on $\T$. For each $n\in\Z$, we have
\begin{align*}
  \wh{\nu} _{p}(n)-1&= \dfrac{1}{\mu _{p,c}(\T)}\bigl( \wh{\mu }_{p,c}(n)-1\bigr)+\dfrac{1}{\mu _{p,c}(\T)}-1\,\\
\intertext{so that}
|\wh{\nu} _{p}(n)-1|&\le \dfrac{1}{\mu _{p,c}(\T)}|1- \wh{\mu }_{p,c}(n)|+\dfrac{1}{\mu _{p,c}(\T)}-1\,\\
&\le \dfrac{1}{\mu _{p,c}(\T)}\left(2^{-p}+\mu_{p,d}(\T)\right)+\dfrac{1}{\mu _{p,c}(\T)}-1\,.\\
\end{align*}
Since the right-hand bound in this expression tends to zero as $p$ tends to infinity, this yields that there exists for every $\delta >0$ a continuous probability measure $\nu $ on $\T$ such that $\sup_{\nq}|\wh{\nu  }(n)-1|<\delta $, which contradicts assumption $(2)$ of Theorem \ref{Theorem 2}. So $\underline{\lim}_{\,p\to+\infty \,}\mu _{p,d}(\T)=\gamma >0$, as claimed. 
\end{proof}

Now, observe that for every $n\in\Z$ and $p\ge 1$,
$$\Re e \,\wh{\mu }_{p}(n)=\Re e\, \wh{\mu }_{p,d}(n)+\Re e\, \wh{\mu }_{p,c}(n)\le \Re e \,\wh{\mu }_{p,d}(n)+\mu_{p,c}(\T)\le \Re e \,\wh{\mu }_{p,d}(n)+1-\mu_{p,d}(\T),$$
so that
$\mu _{p,d}(\T)-\Re e\,\wh{\mu }_{p,d}(n)\le 1-\Re e\,\wh{\mu }_{p}(n)$. Hence
\begin{align*}
1-\dfrac{1}{\mu_{p,d}(\T)}\,\Re e\,\wh{\mu }_{p,d}(n)&\le \dfrac{1}{\mu _{p,d}(\T)}\bigl(1-\Re e\, \wh{\mu }_{p}(n)\bigr).
\intertext{It follows that the discrete probability measures $\tau _{p}=\mu _{p,d}/\mu _{p,d}(\T)$, $p\ge 1$, satisfy}
\sup_{\nq}|1-\wh{\tau }_{p}(n)|^{2}&\le\dfrac{1}{\mu _{p,d}(\T)}\,2^{-p+1}.
\end{align*}
Combining these inequalities with
Claim \ref{claim}, we obtain  that there exists for every $\delta >0$ a discrete probability measure $\tau $ on $\T$ with $\tau (\{1\})=0$ such that 
$\sup_{\nq}|1-\wh{\tau }(n)|<\delta $. Without loss of generality, we can suppose that the measure $\tau $ is supported on a finite set. Write $\tau$ as $$\tau =\sum_{\lambda \in G}b_{\lambda }\delta _{\{\lambda\} },$$ where $G$ is a finite subset of $\T\setminus \{1\}$, $b_{\lambda }>0$ for every $\lambda \in G$, and $\sum_{\lambda \in G}b_{\lambda }=1$. We have for every $\lambda \in G$ and $\iz$
\[
0\le b_{\lambda }\bigl( 1-\Re e\,\lambda ^{n}\bigr)\le 1-\Re e\,\wh\tau 
(n)\le|1-\wh{\tau }(n)|
\]
so that $\sup_{\nq}(1-\Re e\,\lambda ^{n})<\delta /b_{\lambda }$. It follows that $\sup_{\nq}|1-\lambda ^{n}|^{2}<(2\delta) /b_{\lambda }$, i.e.\ that 
$\sup_{\nq}|1-\lambda ^{n}|<\sqrt{(2\delta) /b_{\lambda }}$. Now, recall that $\lambda \neq 1$, and that $\qq$ is supposed to be a Jamison set with Jamison constant $\varepsilon _{0}$. This implies that $\varepsilon _{0}<\sqrt{(2\delta) /b_{\lambda }}$, i.e.\ that $$0<b_{\lambda }<\frac{2\delta }{\varepsilon _{0}^2}\quad\textrm{ for every }\lambda \in G.$$
\par\smallskip
The assumption that $1$ belongs to $\qq$ now comes into play. It implies that $|\wh{\tau }(1)-1|<\delta $, so that 
\[
\int_{\T}|1-\lambda |^{2}d\tau (\lambda )=2\int_{\T}(1-\Re e(\lambda ))\,d\tau (\lambda )<2\delta .
\]
By the Markov inequality, $\sqrt{\delta }\,\tau (\T\setminus \Gamma _{\delta^{1/4}})<2\delta $, where $\Gamma _{\delta^{1/4} }$ denotes the arc $$\Gamma _{\delta^{1/4} }=\{\lambda \in \T\,;\,|\lambda -1|<\delta ^{1/4}\}.$$ Thus $\tau (\Gamma _{\delta^{1/4}})>1-2\sqrt{\delta }$. Let now $\sigma $ be the probability measure on $\T$ defined by $$\sigma =\tau \textbf{1}_{\Gamma_{\delta ^{1/4}} }/\tau (\Gamma _{\delta^{1/4} }).$$ It is a discrete probability measure, supported on the set $G\cap \Gamma _{\delta^{1/4} }$, which has the form
$$\sigma =\sum_{\lambda \in G\cap \Gamma _{\delta ^{1/4}}}a_{\lambda }\delta _{\{\lambda\}},$$ with $a_{\lambda }=b_{\lambda }/\tau (\Gamma _{\delta ^{1/4}})$ for every $\lambda \in G\cap \Gamma _{\delta^{1/4} }$. Hence
\begin{align*}
 0<a_{\lambda }&<\dfrac{\delta }{1-2\sqrt{\delta }}\cdot\dfrac{2}{\varepsilon _{0}^{2}}\quad
\textrm{for every } \lambda \in G\cap \Gamma _{\delta^{1/4} }.
\end{align*}
Moreover, we have for every $n\in\Z$
\begin{align*}
|\wh{\sigma }(n)-1|&\le\dfrac{1}{\tau (\Gamma _{\delta^{1/4} })}\,\Bigl|\wh{\tau \textbf{1}}_{\Gamma _{\delta ^{1/4}}}(n)-1\Bigr|+\dfrac{1}{\tau (\Gamma _{\delta ^{1/4}})}-1\\
&\le\dfrac{1}{\tau (\Gamma _{\delta ^{1/4}})}\,|\wh{\tau }(n)-1|+\dfrac{1}{\tau (\Gamma _{\delta^{1/4} })}\,\tau (\T\setminus \Gamma _{\delta^{1/4} })+\dfrac{1}{\tau (\Gamma _{\delta^{ 1/4}})}-1
\end{align*}
{so that}
\[
\sup_{\nq}|\wh{\sigma }(n)-1|<\dfrac{\delta+ 4\sqrt{\delta }}{1-2\sqrt{\delta }}\cdot 
\]
Let now $\varepsilon >0$. If $\delta >0$ is so small that $(\delta +4\sqrt{\delta })/(1-2\sqrt{\delta })<\varepsilon$, $(2\delta)/(1-2\sqrt{\delta})<\varepsilon$ and $\delta ^{1/4}<\varepsilon $, the associated measure $\sigma $ satisfies properties (a), (b), (c), and (d) of Lemma \ref{Lem12}.
\end{proof}

Our aim is now to use Lemma \ref{Lem12} in order to construct, for every $\varepsilon >0$, a continuous probability measure $\sigma $ on $\T$ such that $\sup_{\nq}|\wh{\sigma }(n)-1|<\varepsilon $. This will contradict assumption (2), and conclude the proof of Theorem \ref{Theorem 2}.
\par\smallskip 
Let us fix $\varepsilon >0$, and let $(\varepsilon _{p})_{p\ge 1}$ be a sequence of positive real numbers decreasing very fast to $0$ (how fast will be specified in the sequel of the proof). Let $(\sigma _{p})_{p\ge 1}$ be a sequence of probability measures associated to $(\varepsilon _{p})_{p\ge 1}$ given by Lemma \ref{Lem12}. We write each measure $\sigma _{p}$ as $$\sigma _{p}=\sum_{\lambda\in F_{p}}a_{\lambda ,p}\,\delta _{\{\lambda \}},$$ where $F_{p}$ is a finite subset of $\T\setminus \{1\}$ contained in $\Gamma _{\varepsilon _{p}}$, $0<a_{\lambda ,p}<\varepsilon _{p}/\varepsilon _{0}^{2}$ for every $\lambda \in F_{p}$, and $\sum_{\lambda \in F_{p}}a_{\lambda ,p}=1$. We have $\sup_{\nq}|\wh{\sigma}_{p} (n)-1|<\varepsilon _{p}$ for every $p\ge 1$.
\par\smallskip 
For each $p\ge 1$, let us denote by $F_{1}\cdot F_{2}\, \cdots\, F_{p}$ the set $$F_{1}\cdot F_{2}\, \cdots\, F_{p}=\{\lambda _{1}\lambda _{2}\dots\lambda _{p}\,;\,\lambda _{i}\in F_{i}\ \textrm{for each}\ i=1,\dots,p\}.$$ We construct the numbers $\varepsilon _{p}$ by induction on $p\ge 1$ in such a way that they satisfy the following condition:
\[
\inf\bigl\{|\lambda -\lambda '|\,;\,\lambda ,\,\lambda '\in F_{1}\cdot F_{2}\, \cdots\, F_{p-1},\ \lambda \neq\lambda '\bigr\}>4\,\varepsilon _{p}.\]

\begin{claim}\label{claim2}
 Under this assumption, we have $\# F_{1}\cdot F_{2}\, \cdots\, F_{p-1}=\#F_{1}\times\dots\times\#F_{p}$ for every $p\ge 1$. In other words, the numbers $\lambda _{1}\dots\lambda _{p}$, $\lambda _{i}\in F_{i}$ for each $i=1,\dots,p$, are all distinct.
\end{claim}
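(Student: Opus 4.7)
The plan is to argue by induction on $p\ge 1$. The case $p=1$ is trivial. For the inductive step, I will assume the statement at level $p-1$, namely that distinct tuples in $F_{1}\times\cdots\times F_{p-1}$ give rise to distinct products in $F_{1}\cdot F_{2}\cdots F_{p-1}$, and use this together with the rapidly decreasing $\varepsilon_{p}$ to conclude at level $p$.

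Suppose $\lambda_{1}\cdots\lambda_{p}=\lambda'_{1}\cdots\lambda'_{p}$ with $\lambda_{i},\lambda'_{i}\in F_{i}$, and set $\mu=\lambda_{1}\cdots\lambda_{p-1}$ and $\mu'=\lambda'_{1}\cdots\lambda'_{p-1}$, both elements of $F_{1}\cdot F_{2}\cdots F_{p-1}$. The goal will be to first show $\mu=\mu'$, which immediately forces $\lambda_{p}=\lambda'_{p}$, and then to invoke the inductive hypothesis on the equality $\mu=\mu'$ to get $\lambda_{i}=\lambda'_{i}$ for all $i<p$. To obtain $\mu=\mu'$ I would rewrite $\mu\lambda_{p}=\mu'\lambda'_{p}$ as $(\mu-\mu')\lambda_{p}=\mu'(\lambda'_{p}-\lambda_{p})$; since every point of $\T$ has modulus $1$, this yields $|\mu-\mu'|=|\lambda'_{p}-\lambda_{p}|$. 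Because $F_{p}\subset\Gamma_{\varepsilon_{p}}$, both $\lambda_{p}$ and $\lambda'_{p}$ lie within distance $\varepsilon_{p}$ of $1$, so the triangle inequality gives $|\lambda'_{p}-\lambda_{p}|<2\varepsilon_{p}$, hence $|\mu-\mu'|<2\varepsilon_{p}$. On the other hand, the inductive choice of $\varepsilon_{p}$ guarantees that any two distinct points of $F_{1}\cdot F_{2}\cdots F_{p-1}$ are separated by more than $4\varepsilon_{p}$. These two bounds are compatible only when $\mu=\mu'$.

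There is really no obstacle here; the claim is precisely the combinatorial content for which the $\varepsilon_{p}$ were chosen to decrease so fast relative to the separation of the previously constructed products. The only mild bookkeeping point is to keep track of which level of the sequence controls which comparison: at the $p$-th stage the relevant separation is the one built into $F_{1}\cdot F_{2}\cdots F_{p-1}$ (controlled by $4\varepsilon_{p}$), while the smallness of the new factors comes from $F_{p}\subset \Gamma_{\varepsilon_{p}}$, and not the reverse.
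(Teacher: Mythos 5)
Your proof is correct and follows essentially the same route as the paper: induction on $p$, using the separation property of $F_1\cdots F_{p-1}$ (by more than $4\varepsilon_p$) together with $F_p\subset\Gamma_{\varepsilon_p}$ to force $\mu=\mu'$, and then the inductive hypothesis. The only cosmetic difference is that you derive the exact identity $|\mu-\mu'|=|\lambda_p'-\lambda_p|$ from the factorization $(\mu-\mu')\lambda_p=\mu'(\lambda_p'-\lambda_p)$ and argue directly, whereas the paper bounds $|\lambda\lambda_p-\lambda'\lambda_p'|$ from below via the reverse triangle inequality and argues by contradiction; the content is the same.
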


\begin{proof}[Proof of Claim \ref{claim2}]
 The proof is a simple induction on $p$. Suppose that the assumption is true for some $p\ge 1$. If $\lambda _{1}\dots\lambda _{p}$ and $\lambda _{1}'\dots\lambda '_{p}$ are such that $\lambda _{i},\,\lambda '_{i}\in F_{i}$ for each $i=1,\dots,p$ and $\lambda _{1}\dots\lambda _{p}=\lambda '_{1}\dots\lambda _{p}'$, we need to show that $\lambda _{i}=\lambda _{i}'$ for every $i=1,\dots,p$. If $\lambda _{p}=\lambda '_{p}$, then $\lambda _{1}\dots\lambda _{p-1}=\lambda _{1}'\dots\lambda _{p-1}'$, and our induction assumption implies that $\lambda _{i}=\lambda _{i}'$ for every $i=1,\dots,p-1$. So let us suppose that $\lambda _{p}\neq\lambda '_{p}$. Writing $\lambda =\lambda _{1}\dots\lambda _{p-1}$ and $\lambda '=\lambda _{1}'\dots\lambda '_{p-1}$, we have $\lambda \neq\lambda '$ and thus
\[
|\lambda \lambda _{p}-\lambda '\lambda '_{p}|\ge|\lambda -\lambda '|-|\lambda _{p}-\lambda '_{p}|>4\varepsilon _{p}-2\varepsilon _{p}=2\varepsilon _{p}
\]
since $\lambda _{p},\,\lambda '_{p}\in \Gamma _{\varepsilon _{p}}$ and $\lambda ,\lambda '\in F_{1}\cdot F_{2}\, \cdots\, F_{p-1}$ with $\lambda \neq\lambda '$. So $\lambda \lambda _{p}\neq\lambda '\lambda '_{p}$, which contradicts our initial assumption. Hence $\# F_{1}\cdot F_{2} \, \cdots\, F_{p-1}=\#F_{1}\times\dots\times\#F_{p}$.
\end{proof}

We now additionally require that the sequence $(\varepsilon _{p})_{p\ge 1}$ decreases to zero in such a way that the series $\sum_{p\ge 1}\varepsilon _{p}$ is convergent. This implies that we can define a probability measure $\sigma $ as the infinite convolution product of the measures $\sigma _{p}$: $$\sigma =\underset {p\geq 1}\Asterisk \sigma _{p}.$$ In order to prove that the measure $\sigma$ is well-defined, it suffices to check that the infinite product $\prod_{p\ge 1}\wh{\sigma}_{p}(n)$ converges for every $n\in \Z$, and this is straightforward using the facts that 
\[
|\wh{\sigma}_{p} (n)-1|\le\int_{F_{p}}|\lambda ^{n}-1|\,d\sigma _{p}(\lambda )\le n\sup_{\lambda \in F_{p}}|\lambda -1|<n\varepsilon _{p}
\]
for every $p\ge 1$ and every $n\in\Z$ and that the series $\sum_{p\ge 1}\varepsilon _{p}$ is convergent. For every $n\in\Z$, we have
\begin{align*}
 \wh{\sigma }(n)&=\prod_{p\ge 1}\wh{\sigma }_{p}(n)\\[-2ex]
\end{align*}
and
\begin{align*}
\sup_{\nq}|\wh{\sigma }(n)-1|&\le \sup_{\nq}\sum_{p\ge 1}|\wh{\sigma}_{p}(n)-1|
\le\sum_{p\ge 1}\sup_{\nq}|\wh{\sigma}_{p}(n)-1|\le\sum_{p\ge 1}\varepsilon _{p}.
\end{align*}
If we suppose that the sequence $(\varepsilon _{p})_{p\ge 1}$ is such that $\sum_{p\ge 1}\varepsilon _{p}<\varepsilon $, then the measure $\sigma $ satisfies $\sup_{\nq}|\wh{\sigma }(n)-1|<\varepsilon $.  In order to reach the contradiction we are looking for, it remains to prove that:

\begin{claim}\label{claim3}
 The measure $\sigma  $ is continuous.
\end{claim}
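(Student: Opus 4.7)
The plan is to bound the atoms of $\sigma$ by the maximum atom of a partial convolution product, and then exploit the ``distinctness'' property from Claim \ref{claim2} together with the smallness estimate $a_{\lambda,p}<\varepsilon_p/\varepsilon_0^2$ from Lemma \ref{Lem12}(c) to make this maximum atom tend to zero as $p\to\infty$.

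More precisely, for every $N\ge 1$ set
\[
S_{N}=\sigma _{1}*\sigma _{2}*\cdots *\sigma _{N}\qquad \text{and}\qquad \tau _{N}=\underset{p\ge N+1}\Asterisk\sigma _{p},
\]
so that $\sigma =S_{N}*\tau _{N}$ where $\tau_N$ is a probability measure on $\T$ (its existence follows from the same convergence argument already used to define $\sigma$, since $\sum_{p>N}\varepsilon_{p}<+\infty$). For any $\mu \in\T$, the definition of convolution gives
\[
\sigma (\{\mu \})=\int _{\T}S_{N}(\{\mu \bar\nu \})\,d\tau _{N}(\nu )\le \sup_{\eta \in\T}S_{N}(\{\eta \}).
\]
Thus it suffices to show that $\sup_{\eta \in\T}S_{N}(\{\eta \})$ tends to zero as $N\to +\infty $.

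Now $S_{N}$ is supported on the finite set $F_{1}\cdot F_{2}\cdots F_{N}$, and for any $\eta \in F_{1}\cdot F_{2}\cdots F_{N}$, Claim \ref{claim2} guarantees that there is a \emph{unique} decomposition $\eta =\lambda _{1}\lambda _{2}\cdots \lambda _{N}$ with $\lambda _{i}\in F_{i}$. Hence
\[
S_{N}(\{\eta \})=a_{\lambda _{1},1}\,a_{\lambda _{2},2}\cdots a_{\lambda _{N},N}.
\]
Since $a_{\lambda _{i},i}\le 1$ for every $i$ (as $\sigma _{i}$ is a probability measure on a finite set to which $\lambda_i$ belongs) and $a_{\lambda _{N},N}<\varepsilon _{N}/\varepsilon _{0}^{2}$ by Lemma \ref{Lem12}(c), we obtain
\[
\sup_{\eta \in\T}S_{N}(\{\eta \})\le \dfrac{\varepsilon _{N}}{\varepsilon _{0}^{2}}\cdot
\]
Combining these two bounds yields $\sigma (\{\mu \})\le \varepsilon _{N}/\varepsilon _{0}^{2}$ for every $\mu \in\T$ and every $N\ge 1$; letting $N\to +\infty $ gives $\sigma (\{\mu \})=0$ for every $\mu \in \T$, proving that $\sigma $ is continuous.

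The argument is essentially a bookkeeping one: the only conceptual step is the reduction $\sigma (\{\mu \})\le \sup_{\eta }S_{N}(\{\eta \})$, which exploits the fact that $\tau_N$ is a probability measure and that pulling an atom through a convolution can only spread mass. The main ``real'' content has already been done upstream in Claim \ref{claim2}, which ensures that no coincidences among products of elements of the $F_i$'s can artificially concentrate mass at a single point; without that, an atom of $S_N$ could combine contributions from many decompositions and potentially stay bounded away from zero.
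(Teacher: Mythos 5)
Your proof is correct, and it takes a genuinely different and in fact cleaner route than the paper's. The paper establishes continuity by building the Cantor-like structure explicitly: it introduces the arcs $\Gamma_{\lambda,p}$ and the decreasing compact sets $K_p$, shows $\sigma(\Gamma_{\lambda,p})=a_{\lambda_1,1}\cdots a_{\lambda_p,p}$ by a $w^*$-semicontinuity argument for the partial products $\tau_q$, and then bounds $\sigma(\{\mu\})\le\prod_{i=1}^p \varepsilon_i/\varepsilon_0^2$, which forces an extra requirement on the sequence $(\varepsilon_p)$, namely that $\varepsilon_0^{-2p}\varepsilon_1\cdots\varepsilon_p\to 0$. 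Your argument replaces all of this with the single observation that $\sigma=S_N*\tau_N$ for every $N$, so that $\sigma(\{\mu\})\le\sup_\eta S_N(\{\eta\})$; and by Claim~\ref{claim2} each atom of $S_N=\sigma_1*\cdots*\sigma_N$ is a \emph{unique} product $a_{\lambda_1,1}\cdots a_{\lambda_N,N}$, which you bound by its last factor $a_{\lambda_N,N}<\varepsilon_N/\varepsilon_0^2\to 0$. Both proofs lean on the distinctness property from Claim~\ref{claim2}; yours avoids the geometric bookkeeping and the Portmanteau/semicontinuity step entirely, and needs only $\varepsilon_p\to 0$ (automatic from $\sum\varepsilon_p<\infty$) rather than the stronger decay condition. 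The one step you pass over lightly is the factorization $\sigma=S_N*\tau_N$, i.e.\ that the tail infinite convolution $\tau_N$ exists and that the infinite product splits; this is standard and immediate from Fourier coefficients, since $\wh{\sigma}(n)=\bigl(\prod_{p\le N}\wh{\sigma}_p(n)\bigr)\bigl(\prod_{p>N}\wh{\sigma}_p(n)\bigr)$ and Fourier coefficients determine a probability measure on $\T$, but it is worth saying explicitly.
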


 Claim \ref{claim3} follows from rather standard arguments which rely on the fact that $\sigma $ is by construction a Cantor-like measure.
\begin{proof}[Proof of Claim \ref{claim3}]
For every $p\ge 1$, let $\tau _{p}$ be the partial convolution product 
\[
\tau _{p}=\sigma _{1}*\sigma _{2}*\,\dots\,*\sigma _{p}=\sum_{\lambda _{1}\in F_{1},\dots,\lambda _{p}\in F_{p}}a_{\lambda _{1},1}a_{\lambda _{2},2}\dots a_{\lambda _{p},p}\,\delta _{\{\lambda _{1}\dots\lambda _{p}\}}.
\]
The support of $\tau _{p}$ consists of the $N_{p}=\#F_{1}\times\#F_{2}\times\cdots\times\#F_{p}$ distinct points of the set $F_{1}\cdot F_{2}\, \cdots\, F_{p}$. Whenever $\lambda $ and $\lambda '$ are two distinct elements of this set, $|\lambda -\lambda '|>4\varepsilon _{p+1}$. For each $\lambda \in F_{1}\cdot F_{2}\, \cdots\, F_{p}$, let $\Gamma _{\lambda ,p}$ denote the arc
\[
\Gamma _{\lambda ,p}=\bigl\{\mu \in \T\,;\,|\mu -\lambda |<2\varepsilon _{p+1}\bigr\}\cdot\ 
\]
All the arcs $\Gamma _{\lambda ,p}$, $\lambda \in F_{1}\cdot F_{2}\, \cdots\, F_{p}$, are pairwise disjoint. We denote by $K_{p}$ the following compact subset of $\T$:
\[
K_{p}=\bigcup_{\lambda \in F_{1}\cdot F_{2}\, \cdots\, F_{p}}\Gamma _{\lambda ,p}.
\]
If $\lambda \in F_{1}\cdot F_{2}\, \cdots\, F_{p}$ and $\lambda _{p+1}\in F_{p+1}$, $|\lambda \lambda _{p+1}-\lambda |<\varepsilon _{p+1}$, and if $2\varepsilon _{p+2}<\varepsilon _{p+1}$, it follows that the arc $\Gamma _{\lambda \lambda _{p+1},p+1}$ is contained in $\Gamma _{\lambda ,p}$. The sequence $(K_{p})_{p\ge 1}$ is hence decreasing, and $\tau_{q}(K_{p})=1$ for every $q\ge p$.
\par\smallskip 
We now aim to show that for every $q\ge p$ and every $\lambda \in F_{1}\cdot F_{2}\, \cdots\, F_{p}$, which we write as $\lambda =\lambda _{1}\dots\lambda _{p}$ with $\lambda _{i}\in F_{i}$ for every $i=1,\dots,p$, we have $\tau _{q}(\Gamma _{\lambda ,p})=a_{\lambda _{1},1}\dots a_{\lambda _{p},p}$. For this it suffices to prove that for every $\lambda' \in F_{1}\cdot F_{2}\, \cdots\, F_{p}$ and every $\lambda _{i}\in F_{i}$, $i=p+1,\dots,q$, the point $\lambda '\lambda _{p+1}\dots\lambda _{q}$ belongs to $\Gamma _{\lambda ,p}$ if and only if $\lambda =\lambda '$.
Indeed, if this last statement is true we will have 
\begin{align*}
 \tau_{q}(\Gamma _{\lambda ,p})&=
\sum_{\lambda _{p+1}\in F_{p+1},\dots,\lambda _{q}\in F_{q}}
a_{\lambda _{1},1}\dots a_{\lambda _{q},q}\\
&=a_{\lambda _{1},1}\dots a_{\lambda _{p},p}\left(\sum_{\lambda _{p+1}\in F_{p+1},\dots,\lambda _{q}\in F_{q}}
a_{\lambda _{p+1},p+1}\dots a_{\lambda _{q},q}\right)\\
&=a_{\lambda _{1},1}\dots a_{\lambda _{p},p}\prod_{i=p+1}^{q}\left(\sum_{\lambda_{i}\in F_{i}}a_{\lambda_{i},i}\right)=a_{\lambda _{1},1}\dots a_{\lambda _{p},p}.
\end{align*}
With the notation above, observe first that 
\[
|\lambda \lambda _{p+1}\dots\lambda _{q}-\lambda |\le\sum_{i=p+1}^{q}|\lambda _{i}-1|<\sum_{i\ge p+1}\varepsilon _{i}.
\]
If the sequence $(\varepsilon _{p})_{p\ge 1}$ decreases so fast that $\sum_{i\ge p}\varepsilon _{i}<2\varepsilon _{p}$ for every $p\ge 1$, the right-hand bound in the display above is less than $2\varepsilon _{p+1}$, so that $\lambda \lambda _{p+1}\dots\lambda _{q}$ indeed belongs to the arc $\Gamma _{\lambda ,p}$.
Conversely, suppose that $\lambda \neq\lambda '$. Then $\lambda '\lambda _{p+1}\dots\lambda _{q}$ belongs to the arc $\Gamma _{\lambda ',p}$ which is disjoint from $\Gamma _{\lambda ,p}$. This proves our claim that $\tau _{q}(\Gamma _{\lambda ,p})=a_{\lambda _{1},1}\dots a_{\lambda _{p},p}$ for every $q\ge p$ and $\lambda =\lambda _{1}\dots \lambda _{p}$ with $\lambda _{i}\in F_{i} $ for each $i=1,\dots,p$.
\par\smallskip 
Observe now that if we endow the set $\mathcal{P}(\T)$ of probability measures on $\T$ with the topology of $w^{*}$-convergence of measures, the map $m\mapsto m(F)$ is upper semi-continuous on $\mathcal{P}(\T)$ for every closed subset $F$ of $\T$.
The arcs $\Gamma _{\lambda ,p}$ being closed and disjoint, and the measure $\sigma$ being the $w^{*}$-limit of the sequence of measures $(\tau_{q})_{q\ge 1}$, it follows that $\sigma (\Gamma _{\lambda ,p})=a_{\lambda _{1},1}\dots a_{\lambda _{p},p}$ for every $\lambda =\lambda _{1}\dots\lambda _{p}$ with $\lambda _{i}\in F_{i}$ for every $i=1,\dots,p$, and also that $\sigma  (K_{p})=1$. The measure $\sigma $ is thus supported on the compact set $K=\bigcap_{p\ge 1}K_{p}$, and there exists for every $\mu \in K$ a unique sequence $(\lambda _{i})_{i\ge 1}$ with $\lambda _{i}\in F_{i}$ for each $i\ge 1$ such that $\mu $ belongs to $\Gamma _{\lambda _{1}\dots\lambda _{p},p}$ for every $p\ge 1$. Thus $$0\le \sigma (\{\mu \})\le\sigma (\Gamma _{\lambda _{1}\dots\lambda _{p},p})=a_{\lambda _{1},1}\dots a_{\lambda _{p},p}\quad \textrm{ for every }p\ge 1.$$ It is at this point that we use our assumption that $0<a_{\lambda _{i},i}<\varepsilon _{i}/\varepsilon _{0}^{2}$ for every $i\ge 1$: it yields that
\[ 0\le
\sigma (\{\mu \})\le\prod_{i=1}^{p}\ \dfrac{\varepsilon _{i}}{\varepsilon _{0}^{2}\vphantom{ \textrm{\Large A}}}\quad\textrm{for every}\ p\ge 1.
\]
If $\varepsilon _{0}^{-2p}\varepsilon _{1}\dots\varepsilon _{p}$ tends to zero as $p$ tends to infinity, which can always be assumed provided $\varepsilon _{p}$ tends to zero sufficiently quickly, then $\sigma (\{\mu \})=0$. This being true for every $\mu \in K$, the measure $\sigma $ is continuous.
\end{proof}
We have thus constructed, for every $\varepsilon>0$, a continuous probability measure $\sigma$ on 
$\T$ with the property that $\sup_{n\in Q}|\wh{\sigma}(n)-1|<\varepsilon$. This stands in contradiction with assumption (2) of Theorem \ref{Theorem 2}, and terminates the proof.

\section{Equidistribution and Kazhdan sets in $\Z$}\label{Section 7}

As a straightforward consequence of Theorem \ref{Theorem 2}, we obtain the following answer to Question \ref{Que4}:

\begin{theorem}\label{The12}
 Let $(n_{k})_{\kk}$ be a sequence of elements of $\Z$ with $n_{0}=1$. Suppose that the sequence $(n_{k}\theta )_{\kk}$ is uniformly distributed modulo $1$ for every $\theta \in\R\setminus \Q$. Then $\{n_{k}\,;\,\kk\}$ is a \ka\ set in $\Z$.
\end{theorem}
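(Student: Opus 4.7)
The plan is to argue by contraposition, using Theorem~\ref{Theorem 2} in the form of Corollary~\ref{Cor11}. Since $n_0=1$, we have $1\in Q:=\{n_k\,;\,\kk\}$, so Corollary~\ref{Cor11} applies: if $Q$ were not a Kazhdan set in $\Z$, then for every $\varepsilon>0$ there would exist a continuous probability measure $\sigma$ on $\T$ with $\sup_{\nq}|\wh{\sigma}(n)-1|<\varepsilon$. I will show this is impossible by taking $\varepsilon$ small (say $\varepsilon=1/2$) and deriving a contradiction via the Weyl criterion.

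Fix such a continuous probability measure $\sigma$ for $\varepsilon=1/2$. The condition $|\wh{\sigma}(n_k)-1|<1/2$ for every $\kk$ gives $\Re\,\wh{\sigma}(n_k)>1/2$ for all $k$, hence
\[
\Re\Bigl(\dfrac{1}{N}\sum_{k=0}^{N-1}\wh{\sigma}(n_k)\Bigr)>\dfrac{1}{2}\quad\textrm{for every }N\ge 1.
\]
On the other hand, Fubini yields
\[
\dfrac{1}{N}\sum_{k=0}^{N-1}\wh{\sigma}(n_k)=\int_{\T}\Bigl(\dfrac{1}{N}\sum_{k=0}^{N-1}\lambda^{n_k}\Bigr)d\sigma(\lambda).
\]
By the Weyl criterion, the uniform distribution modulo $1$ of $(n_k\theta)_{\kk}$ for every $\theta\in\R\setminus\Q$ is equivalent to $\frac{1}{N}\sum_{k=0}^{N-1}\lambda^{n_k}\to 0$ for every $\lambda\in\T$ which is not a root of unity. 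Since the set of roots of unity is countable and $\sigma$ is continuous, it has $\sigma$-measure zero, so the integrand converges to $0$ $\sigma$-almost everywhere. The integrand being bounded in modulus by $1$, dominated convergence gives that the integral tends to $0$ as $N\to+\infty$, contradicting the lower bound above. Hence $Q$ is a Kazhdan set in $\Z$.

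I do not expect any real obstacle here: the whole content of the argument is that Theorem~\ref{Theorem 2} reduces the Kazhdan property in $\Z$ to a statement about Fourier coefficients of \emph{continuous} measures, and continuity is precisely what turns Weyl's criterion (a pointwise statement outside the roots of unity) into a dominated-convergence conclusion. The only places where care is needed are to ensure that $1\in Q$ so that Theorem~\ref{Theorem 2} applies (guaranteed by $n_0=1$), and to note that the countable set of roots of unity — which is exactly the set where the Weyl hypothesis gives no information — is $\sigma$-null because $\sigma$ is continuous.
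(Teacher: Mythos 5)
Your proof is correct and takes essentially the same approach as the paper: both reduce to Theorem~\ref{Theorem 2} (you via Corollary~\ref{Cor11} by contraposition, the paper directly), and both combine the Ces\`aro averages of $\wh{\sigma}(n_k)$ with the Weyl criterion and dominated convergence, with continuity of $\sigma$ making the countable set of roots of unity a null set. The only cosmetic difference is that the paper shows $\sigma$ gives positive mass to the roots of unity (hence has a discrete part), whereas you assume $\sigma$ continuous and derive a contradiction — these are the same estimate read in two directions.
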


\begin{proof}
The proof uses the same idea as that of Example \ref{Exa1}.
 Let  $0<\varepsilon <1$, and suppose that $\sigma $ is probability measure on $\T$ such that $\sup_{\kk}|\wh{\sigma }(n_{k})-1|<\varepsilon $. Then 
\[
\biggl|\,\dfrac{1}{N}\sum_{k=0}^{N-1}\wh{\sigma }(n_{k})-1\,\biggr|<\varepsilon \quad\textrm{for every}\ N\ge 1.
\]
Our assumption that $(n_{k}\theta )_{\kk}$ is uniformly distributed modulo $1$ for every $\theta \in \R\setminus \Q$ means that 
\[
\dfrac{1}{N}\sum_{k=0}^{N-1}\lambda ^{n_{k}}\!\xymatrix@C=22pt{\ar[r]&}\!0\ \textrm{as}\  N\!\xymatrix@C=17pt{\ar[r]&}\!+\infty\quad \textrm{for every}\ \lambda =e^{2i\pi \theta }\ \textrm{with}\ \theta \in \R\setminus\Q.
\] 
If we denote by $C$ the set of all roots of unity, this implies that 
\[
\sigma(C)\ge
\underline{\vphantom{p}\lim}_{\,N\to+\infty }\biggl|\,\int_{C}\biggl( \dfrac{1}{N}\sum_{k=0}^{N-1}\lambda ^{n_{k}}\biggr)d\sigma (\lambda )\,\biggr|\ge1-\varepsilon >0,
\]
and thus in particular $\sigma (C)>0$. Hence $\sigma $ has a discrete part, and Theorem \ref{Theorem 2}
 implies that $\{n_{k}\,;\,\kk\}$ is a \ka\ set in $\Z$.
\end{proof}

More generally, we have:

\begin{theorem}\label{The14}
 Let $(n_{k})_{\kk}$ be a sequence of elements of $\Z$ such that $(n_{k}\theta )_{\kk}$ is uniformly distributed modulo 1 for every $\theta \in\R\setminus \Q$. Then $\{n_{k}\,;\,\kk\}$ is a \ka\ set in $\Z$ if and only if it generates $\Z$.
\end{theorem}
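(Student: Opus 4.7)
The statement is an equivalence whose two directions have rather different flavours, and I plan to deal with them separately. For the ``only if'' direction, I would argue as in Example~\ref{Exa2}: if $Q=\{n_{k}\,;\,\kk\}$ does not generate $\Z$, then $\langle Q\rangle=d\Z$ for some integer $d\ge 2$, and the one-dimensional unitary representation $n\mapsto e^{2\pi in/d}$ of $\Z$ is then trivial on every element of $Q$ (so every unit vector is $(Q,\varepsilon )$-invariant for every $\varepsilon >0$) while having no non-zero $\Z$-invariant vector. This contradicts the \ka\ property of $Q$.

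For the ``if'' direction, the natural idea is to reduce to Theorem~\ref{The12}, which would apply directly were it not for the requirement $n_0=1$. To remedy this I would introduce the augmented sequence $(m_j)_{j\ge 0}$ defined by $m_0=1$ and $m_j=n_{j-1}$ for $j\ge 1$, so that $\{m_j\,;\,j\ge 0\}=\{1\}\cup Q$. Prepending a single term to $(n_k\theta )_{\kk}$ does not affect uniform distribution modulo~$1$, so the hypothesis of Theorem~\ref{The12} holds for $(m_j)_{j\ge 0}$, and that theorem supplies a constant $\delta >0$ which is a \ka\ constant for $\{1\}\cup Q$.

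It then remains to transfer the \ka\ property from $\{1\}\cup Q$ to $Q$. The assumption that $Q$ generates $\Z$ means, by Bezout's identity, that there exist $q_1,\dots,q_r\in Q$ and integers $c_1,\dots,c_r$ with $\sum_{i=1}^{r}c_iq_i=1$. Let $C=\sum_{i=1}^{r}|c_i|\ge 1$. Iterating the unitary triangle inequality $\|\pi (a+b)x-x\|\le\|\pi (a)x-x\|+\|\pi (b)x-x\|$, together with the identity $\|\pi (-a)x-x\|=\|\pi (a)x-x\|$, yields for any unitary representation $\pi $ of $\Z$ on a Hilbert space $H$ and any $x\in H$ the estimate
\[
\|\pi (1)x-x\|\le C\,\sup_{n\in Q}\|\pi (n)x-x\|.
\]
Setting $\varepsilon =\delta /C$, any unit vector $x$ that is $(Q,\varepsilon )$-invariant for $\pi $ is automatically $(\{1\}\cup Q,\delta )$-invariant, and the \ka\ property of $\{1\}\cup Q$ then produces a non-zero $\Z$-invariant vector. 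This shows that $Q$ is itself a \ka\ set in $\Z$ with constant $\varepsilon $.

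The bulk of the work is absorbed into Theorem~\ref{The12}; the remaining ingredients (augmentation of the sequence, Bezout's identity, and the ``telescoping'' estimate for unitary representations) are entirely routine, and I do not expect any serious obstacle in carrying out this plan.
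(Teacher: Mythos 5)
Your proof is correct and takes essentially the same approach as the paper: both directions rest on the same ideas, and in the ``if'' direction both proofs prepend $1$ to the sequence so as to invoke Theorem~\ref{The12}, then transfer the Kazhdan property from $\{1\}\cup Q$ back to $Q$ via a Bezout relation $\sum_i c_iq_i=1$. The only cosmetic difference is that you carry out this transfer directly at the level of unitary representations using the telescoping bound $\norm{\pi(1)x-x}\le C\sup_{n\in Q}\norm{\pi(n)x-x}$, whereas the paper runs the contrapositive on the measure side via Corollary~\ref{Cor7}; your version has the small additional merit of producing an explicit Kazhdan constant $\delta/C$ for $Q$.
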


\begin{proof}
As observed already in Example \ref{Exa2}, sets of the form $a\Z$, where $a\ge 2$ is an integer, are not \ka\ sets in $\Z$, while they satisfy the assumption that $(n_{k}\theta )_{\kk}$ is uniformly distributed modulo $1$ for every $\theta \in \R\setminus \Q$.
 So we only have to prove that if $\{n_{k}\,;\,\kk\}$ generates $\Z$ and satisfies the equidistribution assumption of Theorem \ref{The14}, $\{n_{k}\,;\,\kk\}$ must be a \ka\ set in $\Z$. Suppose it is not the case. There exist then $r\ge 2$ and $k_{1},\dots,k_{r}\ge 0$  such that $n_{k_{1}},\dots,n_{k_{r}}$ have no non-trivial common divisor, and hence integers $a_{1},\dots,a_{r}$ in $\Z$ such that $a_{1}n_{k_{1}}+\cdots+a_{r}n_{k_{r}}=1$. Since $\{n_{k}\,;\,\kk\}$  is not a \ka\ set in $\Z$,  there exists for every $\varepsilon >0$ a probability measure $\sigma $ on $\T$ with $\sigma ({\{1\}})=0$ such that $\sup_{\kk}|\wh{\sigma }(n_{k})-1|<\varepsilon $. Then 
\[
|\wh{\sigma }(1)-1|\le\sum_{i=1}^{r}|a_{i}|\,|\wh{\sigma }(n_{k_{i}})-1|<\biggl( \sum_{i=1}^{r}|a_{i}|\biggr)\,\varepsilon .
\]
This being true for all $\varepsilon >0$, it implies that the sequence $(p_{k})_{k\ge 0}$ defined by the conditions 
$p_{0}=1$ and $\{p_{k}\,;\,\kk\}=\{n_{k}\,;\,\kk\}\cup\{1\}$ is such that $\{p_{k}\,;\,\kk\}$ is a not \ka\ set in $\Z$. But $(p_{k}\theta )_{\kk}$ is still uniformly distributed modulo $1$ for every $\theta \in \R\setminus \Q$, and this contradicts Theorem \ref{The12}.
\end{proof}

As a direct corollary of Theorem \ref{The14}, we obtain:

\begin{example}\label{Exa15}
 Let $p\in\Z[X]$ be a non-constant polynomial. If the integers $p(k)$ have no non-trivial common divisor, $\{p(k)\,;\,k\ge 0\}$ is a \ka\ set in $\Z$.
\end{example}

\begin{proof}
 It is well-known (see for instance \cite{KN}) that for any non-constant polynomial $p\in\Z[X]$ the sequence $(p(k)\theta )_{k\ge 0}$ is uniformly distributed modulo $1$ for every $\theta \in \R\setminus \Q$. So Theorem \ref{The14} applies.
\end{proof}

We finish this section with a remark concerning finite perturbations of \ka\ sets.

\begin{proposition}\label{Pro16}
 Let $Q$ be a subset of $\Z$ which generates $\Z$. If $Q$ is not a \ka\ set in $\Z$, $Q\cup F$ is not a \ka\ set in $\Z$ either, whatever the choice of the finite subset $F$ of $\Z$.
\end{proposition}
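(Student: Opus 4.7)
The plan is to use the characterization given by Corollary \ref{Cor7}: we need to show that for every $\varepsilon>0$ there exists a probability measure $\sigma$ on $\T$ with $\sigma(\{1\})=0$ and $\sup_{n\in Q\cup F}|\wh{\sigma}(n)-1|<\varepsilon$. Since $Q$ is already not Kazhdan, Corollary \ref{Cor7} provides such measures with control on $Q$; the whole point is to extend the control to the finitely many extra points in $F$.

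First I would fix, for each $m\in F$, a representation $m=\sum_{i=1}^{r_m}a_i(m)\,q_i(m)$, where $q_i(m)\in Q$ and $a_i(m)\in\Z$. This is possible precisely because $Q$ generates $\Z$ as a subgroup (integer coefficients are allowed, since if $q\in Q$ we may read $-q$ as ``use coefficient $-1$''). Since $F$ is finite, the constant
\[
C_F=\max_{m\in F}\;\sum_{i=1}^{r_m}|a_i(m)|
\]
is a well-defined positive integer depending only on $F$ and on the fixed choice of representations.

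Next, I would use the elementary inequalities $|\lambda^{p+q}-1|\le|\lambda^p-1|+|\lambda^q-1|$ and $|\lambda^{-p}-1|=|\lambda^p-1|$ for $\lambda\in\T$, combined with the Cauchy--Schwarz bound already used several times in the paper,
\[
|\wh{\sigma}(n)-1|\le \int_{\T}|\lambda^n-1|\,d\sigma(\lambda)\le \sqrt{2}\,|\wh{\sigma}(n)-1|^{1/2}\quad\text{for every }n\in\Z.
\]
Applied to $m=\sum_i a_i(m)q_i(m)\in F$ and to any probability measure $\sigma$ on $\T$, these give
\[
|\wh{\sigma}(m)-1|\le \int_{\T}\sum_{i=1}^{r_m}|a_i(m)|\,|\lambda^{q_i(m)}-1|\,d\sigma(\lambda)\le \sqrt{2}\,C_F\,\sup_{q\in Q}|\wh{\sigma}(q)-1|^{1/2}.
\]

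Finally, given $\varepsilon>0$, I would pick
\[
\varepsilon'=\min\!\left(\varepsilon,\;\frac{\varepsilon^2}{2C_F^2}\right),
\]
and invoke Corollary \ref{Cor7} (applied to $Q$, which is not a Kazhdan set in $\Z$) to obtain a probability measure $\sigma$ on $\T$ with $\sigma(\{1\})=0$ and $\sup_{q\in Q}|\wh{\sigma}(q)-1|<\varepsilon'$. The display above then yields $\sup_{m\in F}|\wh{\sigma}(m)-1|<\varepsilon$, and on $Q$ we already have the bound $\varepsilon'\le\varepsilon$. Thus $\sup_{n\in Q\cup F}|\wh{\sigma}(n)-1|<\varepsilon$, and Corollary \ref{Cor7} shows that $Q\cup F$ is not a Kazhdan set in $\Z$. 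The proof has no genuine obstacle: its only content is noting that $Q$ generating $\Z$ lets each element of $F$ be expressed as a $\Z$-linear combination of boundedly many elements of $Q$, and that the square-root loss coming from Cauchy--Schwarz is harmless since $\varepsilon'$ may be taken arbitrarily small.
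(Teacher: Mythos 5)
Your proof is correct and follows essentially the same route as the paper's: both exploit the fact that $Q$ generates $\Z$ to write the extra points as $\Z$-linear combinations of elements of $Q$, and both pass the Fourier-coefficient control from $Q$ to $F$ via the triangle inequality $|\lambda^{p+q}-1|\le|\lambda^p-1|+|\lambda^q-1|$ together with Corollary~\ref{Cor7}. The only cosmetic difference is that you express each $m\in F$ directly as a combination of elements of $Q$ and track the Cauchy--Schwarz square-root loss explicitly through the choice of $\varepsilon'$, whereas the paper first controls $|\wh{\sigma}(1)-1|$ (referring to the argument of Theorem~\ref{The14}) and states the resulting bound on $F$ more tersely; your more explicit bookkeeping is, if anything, tighter.
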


Proposition \ref{Pro16} applies in particular when $1$ belongs to $Q$. It obviously breaks down if one discards the assumption that  $Q$ generates $\Z$: the set $2\N$ is not a \ka\ set in $\Z$, while $2\N\cup\{1\}$ is a \ka\ set.

\begin{proof}
 The proof is essentially the same as that of Theorem \ref{The14}. Under the assumptions of Proposition \ref{Pro16}, there exists for every $\varepsilon >0$ a probability measure $\sigma $ on $\T$ with $\sigma (\{1\})=0$ such that $\sup_{n\in Q}|\wh{\sigma }(n)-1|<\varepsilon $ and $|\wh{\sigma }(1)-1|<\varepsilon $. If follows that $|\wh{\sigma }(n)-1|<\#F\,\varepsilon $ for every $n\in F$. This being true for all $\varepsilon >0$, $Q\cup F$ is not a \ka\ set in $\Z$.
\end{proof}

\begin{corollary}
Let $Q$ be a \ka\ set in $\Z$ such that $1$ belongs to $Q$, and let $F$ be a finite subset of $Q$, not containing the point $1$. Then $Q\setminus F$ is still a \ka\ set in $\Z$.
\end{corollary}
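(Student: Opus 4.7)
The plan is to derive this corollary directly from Proposition \ref{Pro16}, by applying its contrapositive to the set $Q\setminus F$.

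First I would verify that $Q\setminus F$ generates $\Z$. This is immediate: since $F\subset Q$ does not contain $1$, the point $1$ still belongs to $Q\setminus F$, and $\{1\}$ alone generates $\Z$. This is the key hypothesis needed to invoke Proposition \ref{Pro16}.

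Next I would apply the contrapositive of Proposition \ref{Pro16} with the set $Q\setminus F$ in the role of ``$Q$'' and the set $F$ in the role of ``$F$''. The proposition states that if a subset of $\Z$ generates $\Z$ and is not a \ka\ set, then adjoining any finite set keeps it a non-\ka\ set. Equivalently: if $Q\setminus F$ generates $\Z$ and $(Q\setminus F)\cup F$ is a \ka\ set, then $Q\setminus F$ is itself a \ka\ set. Since $(Q\setminus F)\cup F=Q$ (because $F\subset Q$) and $Q$ is assumed to be a \ka\ set, the conclusion follows.

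There is essentially no obstacle here beyond checking the two trivial set-theoretic facts $1\in Q\setminus F$ and $(Q\setminus F)\cup F=Q$; the real content is already packaged into Proposition \ref{Pro16}, whose proof used the measure-theoretic characterization from Theorem \ref{Theorem 2} together with the elementary estimate $|\wh{\sigma}(n)-1|<\#F\,\varepsilon$ for $n\in F$, coming from the assumption that $1\in Q$ controls $|\wh{\sigma}(1)-1|$. So the entire proof reduces to a two-line invocation: $Q\setminus F$ generates $\Z$ because it contains $1$, and $(Q\setminus F)\cup F=Q$ is \ka, hence by Proposition \ref{Pro16} the set $Q\setminus F$ is \ka.
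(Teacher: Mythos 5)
Your argument is correct and is exactly the intended deduction: the corollary is stated immediately after Proposition \ref{Pro16} with no proof given, precisely because it follows by applying the contrapositive of that proposition to $Q\setminus F$ (which still contains $1$ and hence generates $\Z$) together with the observation that $(Q\setminus F)\cup F = Q$. Nothing is missing.
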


\end{document}